\newtheorem{theorem}{Theorem}[section]
\newtheorem{lemma}[theorem]{Lemma}
\newtheorem{cor}[theorem]{Corollary}
\newtheorem{defn}[theorem]{Definition}
\newcommand{\K}{\kappa}
\newcommand{\PP}{\mathcal{P}}
\newcommand{\TY}{\nabla\mathrm{Y}}
\newcommand{\YT}{\mathrm{Y}\nabla}
\newcommand{\V}{V}
\newcommand{\E}{E}
\newcommand{\dotcup}{\dot{\cup}}
\newcommand{\ddotcup}{\ddot{\cup}}
\newcommand{\ol}{\overline}
\title{Six variations on a theme: almost planar graphs}
\author[Lipton, Mackall, Mattman, Pierce, Robinson, Thomas, Weinschelbaum]{Max Lipton, Eoin Mackall, Thomas W.\ Mattman, Mike Pierce, Samantha Robinson, Jeremy Thomas, and Ilan Weinschelbaum}
\address{
Mathematics Department,
Ford Hall,
Willamette University,
900 State Street,
Salem, Oregon 97301}
\email{mlipton@willamette.edu}
\address{Department of Mathematics and Statistics,
California State University, Chico,
Chico, CA 95929-0525}
\email{eoinmackall@yahoo.com}
\email{TMattman@CSUChico.edu}
\email{mpierce9@mail.csuchico.edu}
\email{jthomas72@mail.csuchico.edu}
\address{Etna High School,
400 Howell Avenue,
P.O. Box 721,
Etna, California 96027 }
\email{mrsrobinsonmath@gmail.com}
\address{
Department of Mathematics and Computer Science,
Wesleyan University,
45 Wyllys Avenue,
Middletown, CT 06459}
\email{iweinschelba@wesleyan.edu}
\thanks{Research supported in part by an NSF REUT grant as well as the provost and math department of CSU, Chico.}
\subjclass[2010]{Primary 05C10, Secondary 57M15 }
\keywords{apex graphs, planar graphs, forbidden minors, obstruction set}
\begin{document}

\begin{abstract}
A graph is \textbf{apex} if it can be made planar by deleting a vertex, that
is, $\exists v$ such that $G-v$ is planar. We define the related notions of \textbf{edge apex},
$\exists e$ such that $G-e$ is planar, and \textbf{contraction apex}, $\exists e$
such that $G/e$ is planar, as well as the analogues with a universal quantifier:
$\forall v$, $G-v$ planar; $\forall e$, $G-e$ planar; and $\forall e$, $G/e$ planar.
The Graph Minor Theorem of Robertson and Seymour ensures that each of these
six gives rise to a finite set of obstruction graphs. For the three
definitions with universal quantifiers we determine this set. 
For the remaining properties, 
apex, edge apex, and contraction apex, we show
there are at least 36, 55, and 82 obstruction graphs respectively. We give
two similar approaches to almost nonplanar ($\exists e$, $G+e$ is nonplanar and
$\forall e$, $G+e$ is nonplanar) and determine the corresponding minor minimal graphs.
\end{abstract}

\maketitle

\section{Introduction}
Kuratowski~\cite{K} showed that the set of planar graphs is determined by
two obstructions.
\begin{theorem} \cite{K,W}
A graph is planar if and only if it has no $K_5$ nor $K_{3,3}$ minor.
\end{theorem}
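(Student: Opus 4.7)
The plan is to prove the two directions separately, with the forward (necessity) direction being a short Euler formula calculation and the converse being a substantive structural argument.

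For the forward direction, I would first verify directly that $K_5$ and $K_{3,3}$ are themselves nonplanar. Assuming a planar embedding, Euler's formula $|V| - |E| + |F| = 2$ together with the face–edge incidence bound $2|E| \geq 3|F|$ (or $2|E| \geq 4|F|$ in the bipartite case) yields numerical contradictions: $5 - 10 + F = 2$ forces $F = 7$, violating $F \leq 2|E|/3 \leq 6$, and $6 - 9 + F = 2$ forces $F = 5$, violating $F \leq 2|E|/4 \leq 4$. Then I would observe that the class of planar graphs is closed under taking minors, since deleting vertices or edges and contracting edges can all be carried out inside a fixed planar embedding. Thus no planar graph can have $K_5$ or $K_{3,3}$ as a minor.

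The converse is the hard direction, and I would attack it by considering a minor-minimal counterexample $G$: a nonplanar graph with no $K_5$ or $K_{3,3}$ minor of minimum order. The first reduction is a connectivity step, showing $G$ must be $3$-connected. If $G$ is disconnected or has a cut vertex, a planar embedding is obtained by pasting embeddings of the blocks. If $G$ has a $2$-vertex cut $\{u,v\}$, I would split $G$ along $\{u,v\}$ into smaller pieces, apply minimality to each (after possibly adding the edge $uv$), and glue the resulting planar embeddings across a common face containing $uv$; the hypothesis that $G$ has no $K_5$ or $K_{3,3}$ minor is preserved under these surgeries.

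Having reduced to the $3$-connected case, the main obstacle is to produce an actual planar embedding of $G$ from the hypothesis. My approach would be to pick an edge $e = xy$ whose contraction $G/e$ is still $3$-connected (such an edge exists in any $3$-connected graph on at least five vertices by a lemma of Thomassen/Tutte) and apply the inductive hypothesis to get a planar embedding of $G/e$. In this embedding, the contracted vertex $z$ has its incident edges arranged in a cyclic order around it; I would then argue that the neighbors of $x$ and the neighbors of $y$ (other than each other) must appear in two consecutive arcs of this cyclic order around $z$, for otherwise one can exhibit a $K_5$ or $K_{3,3}$ minor in $G$ by a case analysis on the interleaving pattern. Once the neighbors split into two arcs, the embedding of $G/e$ can be \emph{uncontracted} by drawing $x$ and $y$ as two nearby points joined by $e$, with the edges to their respective neighbor arcs routed without crossings, producing a planar embedding of $G$ and contradicting the choice of $G$. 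The combinatorial case analysis ruling out every interleaved configuration is where the real work lies.
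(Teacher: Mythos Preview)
The paper does not prove this theorem at all: it is stated with the citation \cite{K,W} and used throughout as a black box, so there is no ``paper's own proof'' to compare against. Your outline is a correct sketch of a standard modern proof (essentially Thomassen's contraction argument for the $3$-connected case, combined with the usual $2$-cut reduction), and nothing in it is wrong as a plan; it simply goes well beyond what the paper itself does with this statement.
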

We give the formulation in terms of minors due to Wagner~\cite{W}
to make the connection with Robertson and Seymour's~\cite{RS}
Graph Minor Theorem.
We say $H$ is a \textbf{minor} of graph $G$ if it can be obtained
by contracting edges in a subgraph of $G$. We can state the Graph Minor Theorem as follows.
\begin{theorem}
\cite{RS}
In any infinite set of graphs, there is a pair such that one is a minor of the other.
\end{theorem}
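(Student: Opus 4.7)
The plan would be to cast the statement as a well-quasi-ordering (WQO) claim: a quasi-order is a WQO if every infinite sequence has some pair of indices $i<j$ with $x_i \leq x_j$, so the theorem is precisely the assertion that the minor relation on finite graphs is a WQO. I would proceed by assuming for contradiction that $G_1, G_2, \ldots$ is a bad sequence, meaning no $G_i$ is a minor of $G_j$ for $i < j$, and, by the standard trick, that the sequence is chosen to minimize $|V(G_1)|$, then $|V(G_2)|$, and so on, so that each $G_n$ is in an appropriate sense the smallest graph continuing a bad sequence.

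The first step is to handle tractable base cases. Kruskal's Tree Theorem establishes WQO for finite trees under topological minor, using the nested minimal-bad-sequence technique. With substantial bookkeeping on labels at the bags, this upgrades to WQO for graphs of bounded tree-width. The minimal-bad-sequence choice then effects a crucial reduction: for $n \geq 2$ every $G_n$ must avoid $G_1$ as a minor, so it suffices to prove the WQO property on the class of graphs excluding some fixed $H$ as a minor.

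The hard part, and the bulk of what Robertson and Seymour accomplish across their Graph Minors series, is to prove this for $H$-minor-free graphs. Their strategy is a structural decomposition theorem: every $H$-minor-free graph admits a tree decomposition whose torsos are almost embeddable on a surface of bounded genus, up to a bounded number of apex vertices and vortices of bounded depth. Combining this with a WQO result for graphs embeddable on any fixed surface, and inducting on the complexity of the excluded $H$, closes the argument by contradicting the assumed badness of the sequence. The main obstacle is precisely this structure theorem together with its companion surface-WQO result, and so in a paper of the present scope the right course is to invoke the theorem as a citation rather than attempt to reproduce the proof.
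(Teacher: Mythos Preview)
Your assessment is correct and matches the paper exactly: the paper does not prove this theorem at all but simply cites it as \cite{RS}, and you rightly conclude that invoking the result as a citation is the appropriate course here. The high-level sketch you give of the Robertson--Seymour argument is accurate as background, but no part of it is needed for the present paper.
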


This has two useful consequences. We say $G$ is \textbf{minor minimal $\PP$}
(or MM$\PP$) if $G$ has property $\PP$ but no proper minor does.

\begin{cor}
For any graph property $\PP$, there is a corresponding finite set of minor minimal
$\PP$ graphs.
\end{cor}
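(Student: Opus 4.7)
The plan is a short proof by contradiction that leverages the Graph Minor Theorem (Theorem 2) directly. Assume, toward a contradiction, that for some graph property $\PP$ the collection MM$\PP$ is infinite, where throughout we regard graphs up to isomorphism so that an ``infinite set'' means infinitely many pairwise non-isomorphic graphs.

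Apply Theorem 2 to this infinite set. It produces two graphs $G, H \in \mathrm{MM}\PP$ such that $G$ is a minor of $H$. Because the members of $\mathrm{MM}\PP$ are pairwise non-isomorphic, $G \neq H$ as graphs, so $G$ is in fact a \emph{proper} minor of $H$. By definition of MM$\PP$, the graph $G$ has property $\PP$; but then $H$ has a proper minor, namely $G$, satisfying $\PP$, contradicting the minor-minimality of $H$.

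Therefore no such infinite $\mathrm{MM}\PP$ can exist, and the corollary follows. The only real subtlety — which I would flag in one sentence — is the convention that MM$\PP$ is understood up to isomorphism; without that convention the statement is literally false (one could repeat a single graph infinitely often), and the word ``proper'' in step two depends on it. There is no genuine obstacle: everything is a one-line appeal to Theorem 2, and the proof does not use any structure of $\PP$ whatsoever (in particular $\PP$ need not be minor-closed for the corollary itself to hold).
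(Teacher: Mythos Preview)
Your proof is correct and is exactly the standard argument implied by the paper, which states the corollary without proof as an immediate consequence of the Graph Minor Theorem. Your remark about working up to isomorphism (so that the minor produced by Theorem~1.2 is genuinely proper) is the only point that needs care, and you handle it correctly.
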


\begin{cor}
Let $\PP$ be a graph property that is closed under taking minors. Then there is a finite set of minor minimal non-$\PP$ graphs $S$ such that for any graph $G$, $G$ satisfies $\PP$ if and only if $G$ has no minor in $S$.
\end{cor}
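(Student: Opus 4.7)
The plan is to take $S$ to be the set of minor minimal non-$\PP$ graphs and deduce the result from the preceding corollary together with the transitivity of the minor relation. Applying the preceding corollary to the property ``not $\PP$'' immediately yields that this set $S$ is finite, so the only work is establishing the stated equivalence.

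The forward direction is straightforward. If $G$ satisfies $\PP$, then because $\PP$ is closed under taking minors, every minor of $G$ also satisfies $\PP$. In particular no minor of $G$ can lie in $S$, since every member of $S$ fails $\PP$.

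For the converse I would argue by contrapositive. Suppose $G$ does not satisfy $\PP$. The collection of non-$\PP$ minors of $G$ is nonempty because it contains $G$ itself, and it is finite because $G$ has only finitely many minors up to isomorphism. Pick any element $H$ of this collection that is minor minimal within it. Since the minor relation is transitive, any proper minor of $H$ is also a proper minor of $G$, hence by the minimality of $H$ must satisfy $\PP$. Therefore $H$ is globally minor minimal non-$\PP$, so $H \in S$, and $G$ has a minor in $S$ as required.

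The argument is essentially bookkeeping; the only subtle point is observing that minor-minimality of $H$ among the non-$\PP$ minors of $G$ upgrades to global minor-minimality thanks to the transitivity of ``is a minor of.'' The genuine content of the statement, namely the finiteness of the obstruction set $S$, is inherited directly from the preceding corollary and ultimately from the Graph Minor Theorem.
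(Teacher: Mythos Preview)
The paper states this corollary without proof, treating it as an immediate consequence of the Graph Minor Theorem and the preceding corollary. Your argument is correct and is exactly the standard derivation one would supply: apply the previous corollary to the property ``not $\PP$'' to get finiteness of $S$, use minor-closure of $\PP$ for the forward direction, and for the converse pick a minor-minimal non-$\PP$ minor of $G$ and use transitivity of the minor relation to see it lies in $S$.
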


When $\PP$ is minor closed, we say that $S$ is the \textbf{Kuratowski set} for $\PP$.
For example, $\{ K_5, K_{3,3} \}$ is the Kuratowski set for planarity.

The Graph Minor Theorem is not constructive,
so there are only a few graph properties $\PP$
for which we know the finite set of MM$\PP$ graphs.
In particular, there are several graph properties
closely related to planarity for which
this set is unknown.
Our goal in this paper is 
to investigate the minor minimal sets for the 
following eight graph properties.

\begin{defn}
A planar graph is \textbf{almost nonplanar (AN)} if there exist two nonadjacent vertices such that adding an edge between the vertices yields a nonplanar graph.
A planar graph is \textbf{completely almost nonplanar (CAN)} if it is not complete and adding an edge between any pair of nonadjacent vertices yields a nonplanar graph.
\end{defn}

Let $G-v$ (respectively, $G-e$, $G/e$) denote the graph resulting from deletion of vertex $v$ and its edges (respectively, deletion of edge $e$, contraction of edge $e$) in graph $G$.

\begin{defn} A graph is \textbf{not apex (NA)} if, for all vertices $v$, $G-v$ is nonplanar. Similarly, a graph is \textbf{not edge apex (NE)} if,
for all edges $e$, $G-e$ is nonplanar and \textbf{not contraction apex (NC)} if,
for all edges $e$, $G/e$ is nonplanar.
\end{defn}

\begin{defn} A graph $G$ is \textbf{incompletely apex (IA)} if there is a vertex $v$ such that $G-v$ is nonplanar, \textbf{incompletely edge apex (IE)} if there is an edge $e$ such that $G-e$ is nonplanar, and \textbf{incompletely contraction apex (IC)}  if there is an edge $e$ such that $G/e$ is nonplanar.
\end{defn}

We call these last three properties `incomplete' in contrast to their negations. For example, we think of a graph as `completely' apex if $G-v$ is planar for every vertex $v$. Table~\ref{tbldfn} gives a summary of our eight definitions.

\begin{table}[h]
   \begin{center}
      \begin{tabular}{c|l}
         Property               & Definition  \\ \hline \hline
         AN & $\exists e, G+e$ is nonplanar, where $G$ is planar \\ \hline
         CAN & $\forall e, G+e$ is nonplanar, where $G$ is planar, not complete \\ \hline
         NA    & $\forall v, G-v$ is nonplanar  \\ \hline  
         NE    & $\forall e, G-e$ is nonplanar   \\ \hline
         NC   & $\forall e, G/e$ is nonplanar \\ \hline
         IA      & $\exists v,  G-v$ is nonplanar \\ \hline
         IE  & $\exists e, G-e$ is nonplanar \\ \hline
         IC & $\exists e, G/e$ is nonplanar  \\ 
      \end{tabular}
   \end{center}
\vspace{5pt}
\caption{Comparision of the eight definitions.}
\label{tbldfn}
\end{table}

\begin{table}[h]
    \begin{center}
        \begin{tabular}{l||c|c|c|c|c|c|c|c}
            Graph Property $\PP$     & AN & CAN & NA        & NE        & NC        & IA  & IE  & IC  \\ \hline
            Is (Not $\PP$) Minor Closed?   & No & No  & Yes       & No        & No        & Yes & Yes & Yes \\ \hline

            Number of MM$\PP$ Graphs & 2  & 1   & $\geq 36$ & $\geq 55$ & $\geq 82$ & 2   & 5   & 7   \\
        \end{tabular}
    \end{center}
\vspace{5pt}
\caption{Results for the eight graph properties.}
\label{tblres}
\end{table}

We summarize our results in Table~\ref{tblres}.
Four of the properties give Kuratowski sets (as their negation generates 
a minor closed set) and with the exception
of NA, NE, and NC, we determine the finite set of MM$\PP$ graphs.
For the remaining three 
%properties, 
properties
we give a lower bound, which is
simply the number of MM$\PP$ graphs we have found, so far.

Our paper is organized as follows. Below we conclude this introduction with a
survey of the literature and provide some preliminary notions used throughout the paper. In Section~2 we determine the MMAN and MMCAN graphs and show that
neither is a Kuratowski set. The following section is our classification
of the MMIA, MMIE, and MMIC graphs, all three of which we show are
Kuratowski. In Section~4 we give an overview of the MMNA graphs, a Kuratowski set.
We classify graphs in this family of connectivity at most one.
For graphs of connectivity two, with $\{a,b\}$ a 2--cut, we classify those for which
$ab \in \E(G)$ as well as those for which a component of $G-a,b$ is nonplanar.
We also prove that a MMNA
graph has connectivity at most five.
In total, we give explicit constructions for 36 MMNA graphs.
Finally, in Section~5 we discuss MMNE and MMNC graphs, first showing these are not Kuratowski.
We classify graphs of connectivity at most one in these two families and discuss computer searches, complete
through graphs of order nine or size 19, that yielded 55 MMNE and 82 MMNA graphs.

Apex graphs are well-studied including
results on MMNA graphs in \cite{A,BM,P}.
Note that Pierce~\cite{P} reports on a computer search that yields
61 MMNA graphs, including all graphs through order ten or size 21
and most of the 36 graphs we describe here.
Different authors have used terms like `almost planar' or
`near planar' in various ways. Here is how our
definitions relate to others in the literature.
Cabello and Mohar~\cite{CM} say that
a graph is {\em near-planar} if it can be obtained from a planar graph by adding an edge. This corresponds to our definition of edge apex. Wagner~\cite{W2} defined
{\em nearly planar (Fastpl\"attbare)}, which corresponds to our idea of completely
apex or not IA. Two further notions of almost planar are not directly related
to the properties we have defined. For Gubser~\cite{G}, a graph $G$ is
{\em almost planar} if for every edge $e$, either $G-e$ or $G/e$ is planar.  In characterizing graphs with no $K^{\aleph_0}$, Diestel, Robertson, Seymour, and Thomas say a graph $G$ is {\em nearly planar} if deleting a bounded number of vertices  makes $G$ planar except for a subgraph of bounded linear width sewn onto the unique cuff of $S^2 - 1$, see~\cite[Section 12.4]{D}. Finally, our notion of CAN is also known as {\em maximally planar}, see Diestel~\cite{D}.

We conclude this introductory section with some notation and definitions, as well as a lemma, used throughout. For us, graphs are simple (no loops or double edges) and undirected. We use $\V(G)$ and $\E(G)$ to denote the vertices and edges of a graph. The \textbf{order} of a graph is $| \V(G)|$ and $|\E(G)|$ is its \textbf{size}. We use $\delta(G)$ to denote the \textbf{minimimum degree} of all the vertices in $G$.

As mentioned earlier, $G-v$, $G-e$, and $G/e$ denote the result of vertex deletion, edge deletion, and edge contraction, respectively. For $v,w \in \V(G)$, $G-v,w$ is the
result of deleting two vertices and their edges. Similarly, for $e,f \in \E(G)$, we define as $G-e,f$ and $G/e,f$ the result of deleting or contracting two edges.
Note that the order of deletion or contraction is arbitrary.
Contracting an edge may result in a double edge. We will assume that one of the doubled edges is deleted so that $G/e$ is again a simple graph.
We use $G_1 \sqcup G_2$ to denote the disjoint union of two 
graphs and $G_1 \dotcup G_2$ for the union
identified on a single vertex.
Similarly, $G_1 \ddotcup G_2$ denotes the union of two graphs identified on two vertices.

In light of Kuratowski's theorem, we call $K_5$ and $K_{3,3}$ the \textbf{Kuratowski graphs} and also refer to them as minor minimal nonplanar or MMNP. A
\textbf{Kuratowski subgraph} or \textbf{K-subgraph} of $G$ is one homeomorphic
to a Kuratowski graph.
A \textbf{cut set} of graph $G$ is a set $U \subset \V(G)$ such that deleting the
vertices of $U$ and their edges results in a disconnected graph. If $|U| = k$, we
call $U$ a 
$k$--\textbf{cut}.
We say $G$ has \textbf{connectivity} $k$ and write $\K(G) = k$ if $k$ is the largest
integer such that $| \V(G) | > k$ and $G$ has no $l$-cut for $l < k$. In particular,
$\K(K_n)  = n-1$.

We conclude this introduction with a useful lemma. In the case that $\K(G) = 2$,
we have $G-a,b = G_1 \sqcup G_2$ where $\{a,b\}$ is a $2$--cut. We will use
$G'_i$ to denote the induced subgraph on $\V(G_i) \cup \{a,b\}$.

\begin{lemma}
\label{lempath}%
If $G$ is homeomorphic to $K_5$ or $K_{3,3}$ with cut set $\{a,b\}$ such that
$G-a,b=G_1\sqcup G_2$, then one of $G'_1$ and $G'_2$ is an $a$-$b$-path.
\end{lemma}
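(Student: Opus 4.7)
Since $G$ is homeomorphic to either $K_5$ or $K_{3,3}$, I would view $G$ as a subdivision of some $H \in \{K_5, K_{3,3}\}$. Call the vertices of $G$ of degree at least three the \emph{branch vertices} (which are exactly the vertices of $H$), and call the maximal paths of internally degree-two vertices joining pairs of branch vertices the \emph{branch paths}; each branch path corresponds to a unique edge of $H$. The key facts I plan to use are that both $K_5$ and $K_{3,3}$ are $3$-connected and hence $3$-edge-connected. The strategy is to show that the cut set $\{a,b\}$ must consist of two vertices lying on a common branch path, and that the isolated component produced, together with $a$ and $b$, is exactly the resulting $a$-$b$-subpath.

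Each of $a$ and $b$ is either a branch vertex or an interior point of a unique branch path. Suppose for contradiction that $a$ and $b$ do not lie on a common branch path: the three possibilities are that both are branch vertices non-adjacent in $H$, that one is a branch vertex and the other is interior to a branch path not incident to it, or that both are interior to distinct branch paths. Define $H'$ from $H$ by deleting the vertex corresponding to each branch vertex in $\{a,b\}$ and the edge corresponding to each branch path whose interior contains a point in $\{a,b\}$; in the three cases $H'$ is, respectively, $H$ with two vertices deleted, with one vertex and one edge deleted, or with two edges deleted. Since $H$ is both $3$-vertex-connected and $3$-edge-connected, $H'$ is connected in all three cases. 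The branch vertices remaining in $G - a, b$ are linked by the intact branch paths (which realize the edges of $H'$), and any truncated branch paths appear only as pendant subpaths attached to still-present branch vertices, so $G - a, b$ is connected, contradicting $\{a,b\}$ being a cut set.

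Hence $a$ and $b$ share a branch path $P = u p_1 \cdots p_{n-1} v$; writing $a = p_\alpha$ and $b = p_\beta$ with $0 \le \alpha < \beta \le n$ (so the extremes $\alpha = 0$ or $\beta = n$ cover the case where $a = u$ or $b = v$), removing $a$ and $b$ splits $P$ into three sub-paths. The outer two remain joined to the rest of $G$ through $u$ and $v$, while the middle sub-path on $p_{\alpha+1}, \ldots, p_{\beta-1}$ is isolated in $G - a, b$. Taking $G_2$ to be this middle component, the induced subgraph $G'_2$ on $V(G_2) \cup \{a,b\}$ consists of $a, p_{\alpha+1}, \ldots, p_{\beta-1}, b$ together with the consecutive edges of $P$, which is precisely an $a$-$b$-path. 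The main obstacle I anticipate is the careful bookkeeping in the contradiction step, particularly when one of $a, b$ is a branch vertex so the description of $H'$ mixes vertex and edge deletion; but the unified framework via $H'$ is designed to absorb all such boundary situations into a single argument.
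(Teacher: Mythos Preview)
Your proof is correct and follows essentially the same approach as the paper's: both arguments use the fact that $K_5$ and $K_{3,3}$ have connectivity at least $3$ to force the $2$-cut $\{a,b\}$ onto a single subdivided edge, whence one side of the cut is just the interior subpath. The paper compresses this into two sentences, whereas you have written out the case analysis (two branch vertices, one of each, two interior vertices) and the auxiliary graph $H'$ explicitly; your version is a fully detailed execution of the sketch the paper gives.
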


\begin{proof}
Since, $\kappa(K_5)=4$ and
$\kappa(K_{3,3})=3$, $G$ must be a proper subdivision of a Kuratowski graph and,
since they disconnect the graph,
$a$ and $b$ are vertices on a subdivided edge of the underlying $K_5$ or $K_{3,3}$. This means that one of the components is simply an $a$-$b$-path.
\end{proof}

\section{Almost nonplanar: MMAN and MMCAN graphs}
In this section we classify the MMAN and MMCAN graphs.
Let $K_5-e$ denote the complete graph on five vertices with an edge deleted and
$K_{3,3} - e$ the result of deleting an edge in the complete bipartite graph $K_{3,3}$.
The unique MMCAN graph is $K_5-e$ and there are two MMAN graphs, $K_5-e$ and $K_{3,3}-e$. Neither of these are Kuratowski sets, since, for example, $K_5$ is a nonplanar graph (hence neither AN nor CAN) that
contains the MMAN and MMCAN graph $K_{5}-e$ as a minor.

Our classification of the minor minimal CAN graphs makes use of a theorem due to Mader.

\begin{theorem}
\cite{M}
Any graph with $n$ vertices and at least 3n-5 edges contains a subdivision of $K_5$.
\end{theorem}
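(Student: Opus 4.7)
The plan is to proceed by induction on $n$, with the implicit assumption $n \geq 5$ needed for a $K_5$-subdivision to exist at all. For the base case $n = 5$, the hypothesis $|E(G)| \geq 10 = \binom{5}{2}$ forces $G = K_5$, which is trivially a $K_5$-subdivision. For the inductive step I would take a hypothetical counterexample $G$ on $n \geq 6$ vertices with $|E(G)| \geq 3n-5$ and no subdivision of $K_5$, and derive a contradiction.

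The first reduction would establish $\delta(G) \geq 4$. If some vertex $v$ has $\deg(v) \leq 3$, then $G - v$ has $n-1$ vertices and at least $3n - 5 - 3 = 3(n-1) - 5$ edges, so the inductive hypothesis produces a $K_5$-subdivision in $G - v$, which also lies in $G$ --- contradiction. So every vertex of $G$ has degree at least four.

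The heart of the argument then handles the case $\delta(G) \geq 4$ by contraction. For any edge $e = uv$, the graph $G/e$ has $n-1$ vertices and exactly $|E(G)| - 1 - |N(u) \cap N(v)|$ edges, after discarding one copy of each multi-edge created. If some edge satisfies $|N(u) \cap N(v)| \leq 2$, then $G/e$ has at least $3(n-1) - 5$ edges, so by induction it contains a $K_5$-subdivision, which I would lift back to $G$ using the edge $uv$: when the merged vertex appears as an internal point of a subdivision path the lift is immediate, and when it appears as a branch vertex I distribute its four incident subdivision-paths between $u$ and $v$ and reroute using $uv$ together with detour edges supplied by $\delta(G) \geq 4$. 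Otherwise every edge of $G$ has at least three common neighbors, and the very high local density around an arbitrary edge forces a $K_5$-subdivision on five nearby vertices directly. The hard part, and the source of the technical length of Mader's original proof, will be the rerouting analysis in the contraction case --- especially when the four incident paths split evenly between $u$ and $v$ and no single one of $u,v$ can serve as the branch vertex without destroying internal disjointness --- together with verifying that the dense-neighborhood case actually delivers a $K_5$-subdivision; both steps rely on $\delta(G) \geq 4$ and careful accounting of internally disjoint paths in locally dense subgraphs.
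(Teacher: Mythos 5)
The paper offers no proof of this statement to compare against: it is quoted directly from Mader \cite{M}, where the proof of Dirac's conjecture occupies a long and delicate argument, so the only question is whether your outline stands on its own. It does not. The base case and the reduction to $\delta(G)\geq 4$ are fine, and your edge count $|E(G/uv)| = |E(G)|-1-|N(u)\cap N(v)|$ is correct, but the two steps you defer as ``the hard part'' are precisely where the entire content of the theorem lives, and neither can be completed along the lines you indicate. In the contraction case, when a $K_5$-subdivision in $G/uv$ uses the merged vertex as a branch vertex and its four incident subdivision paths attach two to $u$ and two to $v$ in $G$, the edge $uv$ can absorb only one of the mismatched paths; to fix the other you must produce a replacement path from (say) $u$ to the appropriate branch vertex that is internally disjoint from the remaining nine paths, and $\delta(G)\geq 4$ supplies no such path --- the extra neighbors of $u$ may all lie on existing subdivision paths in positions that do not help. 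This is not a technicality to be cleaned up later: it is exactly the obstruction that makes forcing a topological $K_5$ so much harder than forcing a $K_5$ minor, and it is why the naive contraction induction that works for minors fails for subdivisions.

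The terminal case is equally unproven. From ``every edge has at least three common neighbors'' together with $\delta(G)\geq 4$ you assert that local density ``forces a $K_5$-subdivision on five nearby vertices directly,'' but no argument is given and none is evident; an abundance of triangles on each edge does not by itself yield five vertices joined by ten internally disjoint paths. Mader's actual proof does not follow this scheme --- he establishes a stronger, connectivity-refined induction hypothesis and the analysis is far more involved. For the purposes of this paper the right move is the one the authors make: cite \cite{M} and use the statement as a black box.
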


In Diestel~\cite{D}, CAN is called {\em maximally planar} and it is proved (Proposition 4.2.8) equivalent to a graph admitting a plane triangulation.

\begin{theorem}
Every plane triangulation with at least $5$ vertices has $K_5 -e$ as a minor
\end{theorem}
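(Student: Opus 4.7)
The plan is to apply Mader's theorem. A plane triangulation $T$ on $n$ vertices has exactly $3n-6$ edges, just one shy of Mader's hypothesis, so the strategy is to add a single edge, invoke Mader to produce a $K_5$-subdivision in $T+uv$, and then show that removing the added edge leaves a subdivision of $K_5-e$ inside $T$.

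For the first step, since $K_n$ is nonplanar for $n\geq 5$, the triangulation $T$ cannot be complete, so it admits two nonadjacent vertices $u$ and $v$. Let $T'=T+uv$; then $|\E(T')|=3n-5$, so by Mader's theorem $T'$ contains a subdivision of $K_5$: five branch vertices joined by ten internally-disjoint paths realizing the edges of $K_5$. I would next note that these ten paths are edge-disjoint (at each branch vertex the four incident subdivision-edges are distributed among four distinct paths, and each edge between internal vertices lies on a unique path), so the added edge $uv$ belongs to at most one of them.

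Now I would split into the two obvious cases. If $uv$ is not used at all, the entire $K_5$-subdivision already lives in $T$, so $T$ has $K_5$, and a fortiori $K_5-e$, as a minor. Otherwise, deleting $uv$ destroys only the single path containing it, and the remaining nine paths together with the five branch vertices form a subdivision of $K_5-e$ inside $T$. Either way, $T$ has $K_5-e$ as a minor. The only step requiring any real care is the edge-disjointness observation that justifies ``at most one path is sabotaged'', which is immediate from the definition of a subdivision, so I do not foresee a serious obstacle.
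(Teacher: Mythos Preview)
Your proof is correct and follows essentially the same route as the paper: add an edge to reach $3n-5$ edges, invoke Mader's theorem to obtain a $K_5$-subdivision, and strip out the added edge to leave a $K_5-e$ minor in $T$. The only cosmetic difference is that the paper observes the new edge $uv$ \emph{must} lie in the subdivision (otherwise the subdivision would already sit inside the planar $T$), thereby collapsing your two cases into one.
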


\begin{proof}
    Let $G$ be a plane triangulation on at least $5$ vertices.
    By Euler's formula, $|\E(G)| = 3(|\V(G)| -2)$.
    Let $G'$ be a nonplanar graph obtained by adding edge $ab$ to $G$.
    Then $|\E(G')| = |\E(G)| + 1 = 3|\V(G)| - 5$.
    By Mader's Theorem $G'$ has a subgraph $H$ homeomorphic to $K_5$.
    Note that we must have $ab \in \E(H)$, else $H$ would be planar.
    Since $H$ is homeomorphic to $K_5$,
    contracting 
    appropriate
    edges in $H - ab$ will result in $K_5-e$,
    showing that $K_5-e$ is a minor of $G$.
\end{proof}

\begin{cor}
The only MMCAN graph is $K_5 - e$.
\end{cor}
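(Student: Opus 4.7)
The plan is to combine the preceding theorem with the equivalence (cited from Diestel) that a graph is CAN if and only if it admits a plane triangulation, and then check that $K_5-e$ itself is minor minimal.

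First I would verify directly that $K_5-e$ is CAN: it is planar and not complete, and adding back the unique missing edge yields $K_5$, which is nonplanar. Hence existence of a MMCAN graph is immediate; the substance of the corollary is uniqueness together with the minor minimality of $K_5-e$.

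For uniqueness, let $G$ be an arbitrary MMCAN graph. Since $G$ is CAN, $G$ is a non-complete maximal planar graph. A quick check on small orders rules out $|\V(G)|\le 4$: the only maximal planar graphs on at most four vertices are $K_3$ and $K_4$, both complete. So $|\V(G)|\ge 5$ and $G$ is a plane triangulation on at least five vertices, whence the preceding theorem produces $K_5-e$ as a minor of $G$. Since $K_5-e$ is itself CAN, minor minimality of $G$ forces $G=K_5-e$.

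Finally, I would verify that $K_5-e$ really is minor minimal by a short enumeration of proper minors. Vertex deletions yield $K_4$ or $K_4-e$, neither of which is CAN (the former is complete; in the latter, restoring the missing edge gives the planar graph $K_4$). Edge deletions produce a five-vertex graph with $8<3(5)-6$ edges, hence not a triangulation and so not maximal planar. Edge contractions collapse to a four-vertex graph, which by the vertex-deletion case cannot be CAN. The real content of the corollary is carried by the theorem above; this final step is just bookkeeping, and I expect no serious obstacle anywhere in the argument.
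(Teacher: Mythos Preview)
Your proposal is correct and follows essentially the same route the paper intends: since CAN is equivalent to being a (non-complete) plane triangulation, the preceding theorem forces any MMCAN graph on at least five vertices to have $K_5-e$ as a minor, and you correctly handle the small-order cases and verify minor minimality of $K_5-e$ by direct inspection. The paper states the corollary without a separate proof, leaving exactly these details to the reader, so your write-up simply makes explicit what the paper treats as immediate.
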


\begin{theorem}
The MMAN graphs are $K_5-e$ and $K_{3,3}-e$
\end{theorem}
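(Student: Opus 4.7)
The plan is to reduce everything to a single structural observation: every AN graph $G'$ has $K_5-e$ or $K_{3,3}-e$ as a minor. To prove this, let $a,b\in\V(G')$ be nonadjacent with $G'+ab$ nonplanar and invoke Kuratowski's theorem to obtain a subgraph $H$ of $G'+ab$ homeomorphic to $K_5$ or $K_{3,3}$. Since $G'$ itself is planar, we must have $ab\in\E(H)$, for otherwise $H$ would lie inside $G'$. The edge $ab$ sits on one of the subdivided-edge paths of $H$; deleting $ab$, discarding the resulting pendant path fragments, and suppressing any remaining degree-two vertices then exhibits $K_5-e$ or $K_{3,3}-e$ as a minor of $G'$.

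Granting this, the theorem follows quickly. Both $K_5-e$ and $K_{3,3}-e$ are AN: each is planar (being a proper subgraph of a Kuratowski graph), and adding back the missing edge yields $K_5$ or $K_{3,3}$, which is nonplanar. If $G$ is any MMAN graph, the structural observation applied to $G$ produces a minor isomorphic to $K_5-e$ or $K_{3,3}-e$; since both of these are AN, minor minimality of $G$ forces this minor to be improper, so $G$ is isomorphic to $K_5-e$ or $K_{3,3}-e$. Conversely, each of $K_5-e$ and $K_{3,3}-e$ is itself minor minimal: any AN graph contains $K_5-e$ or $K_{3,3}-e$ as a minor, hence has at least $5$ vertices and $9$ edges, or at least $6$ vertices and $8$ edges. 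A direct count shows the proper minors of $K_5-e$ have at most $4$ vertices, or $5$ vertices and at most $8$ edges; similarly the proper minors of $K_{3,3}-e$ have at most $5$ vertices, or $6$ vertices and at most $7$ edges. These bounds preclude being AN in every case.

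The main obstacle is the structural observation, and within it the topological bookkeeping that $H-ab$ really does realize $K_5-e$ or $K_{3,3}-e$ as a minor of $G'$: the two path fragments left behind when $ab$ is removed from its subdivided edge have to be deleted or contracted away without affecting the rest of the Kuratowski skeleton. This is routine but is the only step that is not immediate.
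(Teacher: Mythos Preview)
Your proposal is correct and follows essentially the same approach as the paper: both arguments pick the added edge $ab$, invoke Kuratowski's theorem to find a subdivision $H$ containing $ab$, observe that $ab\in\E(H)$ by planarity of the original graph, and conclude that $H-ab$ yields $K_5-e$ or $K_{3,3}-e$ as a minor. The paper simply asserts that $K_5-e$ and $K_{3,3}-e$ are MMAN, whereas you supply the vertex/edge counting to verify minor minimality explicitly; otherwise the arguments are the same.
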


\begin{proof}
First note that these two graphs are MMAN. Let $G$ be AN and let $ab$ be the edge that is added to form the nonplanar $G'$. By Kuratowski's theorem $G'$ contains a subdivision $H$ of $K_5$ or $K_{3,3}$ and $ab \in \E(H)$. By contracting edges, $H$ gives $K_5 -e$ or $K_{3,3}-e$ as a minor of $G$. So $G$ is MMAN only if it is one of these two.
\end{proof}

\section{Incomplete properties: MMIA, MMIE, and MMIC graphs}
In this section we classify the MMIA, MMIE, and MMIC graphs.
Note that each is a Kuratowski set since the corresponding `complete' property is minor closed.
For example, in the case of the IA graphs, 
suppose $G$ is not IA and let $H$ be a subgraph of $G$.
Then for any $v \in \V(H)$, $H-v$ is planar since it is a subgraph of the planar graph $G-v$.
Similarly if $G$ is not IA, let $H = G/f$ for some $f \in \E(G)$.
Then for any $v \in \V(H)$, $H-v$ is planar since it is a minor of the planar graph $G-v$.
This shows that the property \emph{not} IA (also known as the completely apex property) is minor closed.
Similar arguments show that
not IE and not IC are also minor closed.

We next show there are exactly two MMIA graphs, $K_1 \sqcup K_5$
and $K_1 \sqcup K_{3,3}$. We begin by classifying the disconnected graphs.

\begin{theorem}
If $G$ is not connected and MMIA, then $G=K_{1}\sqcup G_{2}$
where $G_{2} \in \{ K_{5},K_{3,3} \}$.
\end{theorem}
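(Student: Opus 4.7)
The plan is to show that a disconnected MMIA graph must consist of exactly one isolated vertex together with a minor minimal nonplanar component. I would begin by observing that since $G$ is IA, some $G-v$ is nonplanar, so $G$ itself is nonplanar. Because the disjoint union of planar graphs is planar, at least one connected component of $G$ must be nonplanar; call such a component $G_2$.

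Next I would rule out any further vertices outside $G_2$ beyond a single isolated one. Suppose for contradiction that $|\V(G)\setminus \V(G_2)| \geq 2$. Deleting all but one of the vertices outside $G_2$ yields a proper minor $M = K_1 \sqcup G_2$ of $G$. Removing the isolated $K_1$ vertex of $M$ leaves the nonplanar graph $G_2$, so $M$ is IA, contradicting the minor minimality of $G$. Since $G$ is disconnected we certainly have $|\V(G)\setminus \V(G_2)| \geq 1$, so equality must hold and $G = K_1 \sqcup G_2$.

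Finally I would identify $G_2$ as minor minimal nonplanar. For any proper minor $H$ of $G_2$, the graph $K_1 \sqcup H$ is a proper minor of $G$, obtained by performing the corresponding vertex/edge operation inside the $G_2$-component. By MMIA, $K_1 \sqcup H$ is not IA, so in particular removing the isolated vertex gives the planar graph $H$. Hence every proper minor of $G_2$ is planar; combined with $G_2$ being nonplanar, this makes $G_2$ minor minimal nonplanar, so by Kuratowski's theorem $G_2 \in \{K_5, K_{3,3}\}$.

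The main obstacle is the second step, where we must certify that no extra vertices or components can coexist with the nonplanar piece. The key maneuver is exhibiting the single explicit proper minor $K_1 \sqcup G_2$, which remains IA precisely because the nonplanar component survives intact; this single exhibit is what forces the quantitative constraint $|\V(G)\setminus \V(G_2)|=1$.
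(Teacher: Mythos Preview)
Your proof is correct and follows essentially the same approach as the paper's: both arguments hinge on exhibiting a proper minor of the form $K_1 \sqcup G_2$ that remains IA to force the complement of the nonplanar component down to a single vertex, and then using minor minimality to show $G_2$ is MMNP. The only difference is organizational---the paper runs an explicit three-case analysis on the planarity of the two pieces, whereas you streamline by identifying the nonplanar component $G_2$ up front and bounding $|\V(G)\setminus \V(G_2)|$ directly.
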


\begin{proof}
Let $G=G_{1}\sqcup G_{2}$ and be MMIA. Consider the
three cases i) both $G_{1}, G_{2}$ are planar, ii) both $G_{1},G_{2}
$ are  nonplanar, iii) one of $G_{1}$, or $G_{2}$, is planar and
the other is nonplanar.

Suppose both components are planar. Then for all $ v\in \V(G)$,
$G-v$ would be planar as well. This contradicts $G$ being IA.

Suppose both components are nonplanar. For $v \in \V(G_{1})$, the graph $G-v$ is nonplanar since $G_{2}$ has not changed. Then, $G-v$ is also
IA because we can delete another vertex
$w \in \V(G_{1})$, that again leaves a new, nonplanar graph.
Having an IA minor, $G-v$, contradicts that $G$ is MMIA,
and is not possible.

So it must be that $G$ is the disjoint union of both a
nonplanar component, say $G_{2}$, and a planar component, $G_{1}$. Since
$G$ is MMIA, no proper minor of $G$ is IA. Then, taking a
proper minor of either $G_{1}$ or $G_{2}$ (and leaving the other
unchanged) cannot result in an IA graph. However, if $G_{1}$ has at
least two vertices, $G$ will have a proper IA minor. So, $G_{1}$
must be a planar graph of at most one vertex, whose deletion leaves only
the graph $G_{2}$. Therefore, $G_{1}=K_{1}$. Furthermore, $G_{2}$
must be nonplanar, so that $G$ is indeed IA, and any minor of
$G_{2}$ must be planar. So, $G=K_{1}\sqcup G_{2}$ where
$G_{2}\in \{ K_{5}, K_{3,3} \} $.

Therefore, if $G$ is not connected and MMIA, then $G=K_{1}\sqcup G_{2}$
where $G_{2}\in \{ K_{5}, K_{3,3} \} $.
\end{proof}

\begin{theorem}
There are no connected MMIA graphs.
\end{theorem}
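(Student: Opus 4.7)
The plan is to argue by contradiction, assuming a connected MMIA graph $G$ exists and then producing a proper minor of $G$ that is also IA, thereby violating minor-minimality.

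First I would invoke the definition of IA to obtain a vertex $v$ with $G-v$ nonplanar. By Kuratowski's theorem, $G-v$ contains a Kuratowski subgraph, so $|\V(G-v)| \geq 5$ and $|\V(G)| \geq 6$. Since $G$ is connected, $v$ must have at least one neighbor $w \in \V(G)$. The natural candidate for a proper IA minor is then $H = G - vw$, obtained by deleting just the edge $vw$ (not the endpoints). Clearly $H$ is a proper minor of $G$ since $vw \in \E(G)$.

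The crux of the argument is the identity $(G-e)-v = G-v$ for any edge $e$ incident to $v$: the vertex deletion $-v$ already strips off every edge at $v$, so removing $e$ beforehand has no net effect on the resulting graph. Applying this with $e = vw$ gives $H - v = G - v$, which is nonplanar by choice of $v$. Hence $H$ is IA, contradicting the assumption that $G$ is MMIA.

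I do not expect any real obstacle here; the proof is essentially a one-line manipulation. The only points requiring a moment's care are the verification that $vw$ actually exists (which is where connectedness enters) and the commutation of edge deletion with vertex deletion at a shared endpoint. Combined with the previous theorem classifying the disconnected MMIA graphs, this completes the proof that the MMIA set is exactly $\{K_1 \sqcup K_5,\, K_1 \sqcup K_{3,3}\}$.
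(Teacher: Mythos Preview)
Your proof is correct and is essentially identical to the paper's: both pick an edge $e$ incident to the witnessing vertex $v$, observe that $(G-e)-v=G-v$ remains nonplanar, and conclude that $G-e$ is a proper IA minor. The only difference is cosmetic---you name the neighbor $w$ and add the (unneeded) remark that $|\V(G)|\geq 6$.
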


\begin{proof}
Suppose instead that $G$ is a connected MMIA
graph. Then there is a vertex, $v$, such that $G-v$ is nonplanar.
However, since $G$ is connected, $v$ must have at least one edge,
$e$. Since when deleting a vertex we also delete all of its edges, $G-e$ must be
a proper, nonplanar minor of $G$. However, deleting $v \in \V(G-e)$
is again nonplanar so that $G-e$ is IA. This contradicts the
property that $G$ is MMIA and therefore cannot happen.
\end{proof}

\begin{cor}
There are two MMIA graphs: $K_{1} \sqcup K_5$ and $K_1 \sqcup K_{3.3}$.
\end{cor}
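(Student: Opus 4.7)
The plan is to combine the two preceding theorems, which together cover the disconnected and connected cases exhaustively. An MMIA graph $G$ is either connected or not; the second theorem eliminates the former possibility, while the first theorem pins down the latter, forcing $G$ to be one of $K_1 \sqcup K_5$ or $K_1 \sqcup K_{3,3}$. So the only real content remaining is the converse: checking that both candidates are genuinely MMIA, since the two theorems only provided necessary conditions.

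For the verification, I would handle both graphs in parallel. First, each is IA: deleting the isolated vertex produces $K_5$ or $K_{3,3}$, which is nonplanar. Second, I would show no proper minor is IA by case analysis on what a proper minor can do. A proper minor either (a) affects only the nonplanar component, in which case that component becomes planar (since $K_5$ and $K_{3,3}$ are themselves MMNP), and the resulting disconnected graph has both components planar, hence $G - v$ is planar for every $v$ and the minor is not IA; or (b) deletes the isolated $K_1$, leaving $K_5$ or $K_{3,3}$, which is not IA because $K_5 - v = K_4$ and $K_{3,3} - v = K_{2,3}$ are planar for every vertex $v$; or (c) combines (a) and (b), leaving a proper minor of $K_5$ or $K_{3,3}$, which is planar and therefore not IA. (Edge contraction across components is impossible since there are no such edges.)

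There is no real obstacle here — the corollary is essentially bookkeeping on top of the two theorems, and the converse direction is a short, finite check.
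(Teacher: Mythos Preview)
Your proposal is correct and matches the paper's approach: the corollary is stated without proof as an immediate consequence of the two preceding theorems, and you have simply filled in the easy verification that $K_1 \sqcup K_5$ and $K_1 \sqcup K_{3,3}$ are genuinely MMIA, which the paper leaves implicit.
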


Next we show there are five MMIE graphs. We begin with the disconnected examples. Note that if $G$ has distinct edges $e, e'$ such that $G-e,e'$ is
nonplanar, then $G$ is not MMIE. Indeed, $G-e$ is an IE proper minor.

\begin{theorem} If $G$ is not connected and MMIE, then $G=K_{2}\sqcup G_{2}$
where $G_{2}\in \{ K_{5}, K_{3,3} \}$.
\end{theorem}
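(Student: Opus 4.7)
The plan is to follow the strategy of the MMIA classification. Write $G = G_1 \sqcup G_2$ and case-split on the planarity of the two components. The key tool is the observation noted just before the theorem: if $G$ is MMIE then for any two distinct edges $e, e' \in \E(G)$ the graph $G-e,e'$ must be planar, since otherwise $G-e$ would be a proper IE minor.

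First, I would rule out both $G_1$ and $G_2$ being planar: then $G-e$ is a disjoint union of planar graphs for every $e$, so $G$ is not IE. Next, if both components are nonplanar, then $G_1$ contains a Kuratowski subgraph and so has at least two edges; choosing distinct $e, e' \in \E(G_1)$, the graph $G-e,e'$ still contains $G_2$ and is thus nonplanar, contradicting the observation above.

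It remains to treat the case where, without loss of generality, $G_1$ is planar and $G_2$ is nonplanar. I would split on $|\E(G_1)|$. If $|\E(G_1)| \geq 2$, two distinct edges of $G_1$ again yield a two-edge deletion leaving $G_2$ intact, a contradiction. If $|\E(G_1)| = 0$, then every edge of $G$ lies in $G_2$, so the edge $e$ witnessing that $G$ is IE satisfies $G_2-e$ nonplanar, meaning $G_2$ is itself IE; for any $v \in \V(G_1)$ the proper minor $G-v = (G_1-v) \sqcup G_2$ is then IE via the same edge, a contradiction. If $|\E(G_1)| = 1$ and $|\V(G_1)| \geq 3$, there is an isolated vertex $v \in \V(G_1)$, and the proper minor $G-v$ still contains a $K_2$ component alongside $G_2$; deleting the edge of that $K_2$ leaves $G_2$ as a nonplanar component, showing $G-v$ is IE and again contradicting MMIE. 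The only surviving possibility is $G_1 = K_2$.

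To conclude $G_2 \in \{K_5, K_{3,3}\}$, I would argue that $G_2$ is minor minimal nonplanar: any proper nonplanar minor $H$ of $G_2$ would yield the proper minor $K_2 \sqcup H$ of $G$, which is IE via the edge of $K_2$, contradicting MMIE. Hence $G_2$ is one of the two Kuratowski graphs. The main bookkeeping obstacle is the casework pinning down $G_1 = K_2$; once this is in hand, the final step mirrors the end of the MMIA proof.
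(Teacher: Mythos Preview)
Your proof is correct and follows essentially the same approach as the paper's: both split on the planarity of the two pieces, use the ``no two-edge deletion is nonplanar'' observation to force one piece to be planar and the other nonplanar, then argue the planar piece is exactly $K_2$ and the nonplanar piece is MMNP. The only difference is cosmetic: the paper compresses your casework on $|\E(G_1)|$ into the single sentence ``no proper minor of $G_1$ can have an edge, so $G_1$ must be connected and therefore it is $K_2$,'' whereas you spell out the sub-cases $|\E(G_1)|=0$, $|\E(G_1)|\ge 2$, and $|\E(G_1)|=1$ with $|V(G_1)|\ge 3$ explicitly.
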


\begin{proof}
Let $G=G_{1}\sqcup G_{2}$. If both $G_{1}$ and
$G_{2}$ were planar, then $G$ is not IE. In
addition, $G_{1}$ and $G_{2}$ cannot both be nonplanar because then
$G$ would have a proper IE minor as in the proof for disconnected MMIA
graphs. So, without loss of generality, $G_{1}$ will be planar and $G_{2}$ will be
nonplanar.

The graph $G_{1}$ must then have one and only one edge. If
$G_{1}$ has more than one edge, then $G-e$, where $e\in \E(G_{1})$, will be
an IE graph as well, since we can still remove another edge in
$G_{1}$ to obtain a nonplanar graph. This contradicts the MMIE property of
$G$. In fact, no proper minor of $G_{1}$ can have an edge. So $G_{1}$ must be connected and therefore it is $K_{2}$. In addition, $G_{2}$
must be MMNP. Otherwise, $G_2$ has a proper
nonplanar minor. The corresponding minor of $G$
would then be IE contradicting $G$ is MMIE. So, $G_{2}\in \{ K_{5}, K_{3,3}\} $.
Therefore, if $G$ is not connected and MMIE, then
$G=K_{2}\sqcup G_{2}$ where $G_{2}\in \{ K_{5}, K_{3,3} \}$.
\end{proof}

Recall that $G_1 \dotcup G_2$ denotes the union of $G_1$ and $G_2$ with one vertex in common.

\begin{theorem}
If $G$ is connected, MMIE, and has a cut vertex, then
$G=K_2 \dotcup G_2$ where $G_2\in \{ K_5, K_{3,3} \}$.
\end{theorem}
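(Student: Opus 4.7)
The plan is to decompose $G$ across its cut vertex and use MMIE-minimality to force the two pieces into a very rigid form. Let $v$ be a cut vertex of $G$, and group the components of $G-v$ into two nonempty collections to obtain a decomposition $G = A \dotcup B$, where $A$ and $B$ are the induced subgraphs on the respective collections together with $v$. Both $A$ and $B$ are connected (because $G$ is) and each contains at least one vertex other than $v$. I aim to force $A$ down to $K_2$ and $B$ to be MMNP.

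First, $A$ and $B$ cannot both be planar: the one-point union of two planar graphs is planar, so no edge deletion could yield a nonplanar graph, contradicting that $G$ is IE. Relabeling if needed, assume $B$ is nonplanar.

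The key step is to exhibit $K_2 \dotcup B$ as an IE minor of $G$. Pick a neighbor $w$ of $v$ in $A$, which exists because $A$ is connected with at least two vertices. Deleting the vertices $\V(A) \setminus \{v,w\}$ from $G$ yields exactly the induced subgraph $K_2 \dotcup B$, because $v$ is a cut vertex, so $w$ has no edges outside $A$. This minor is IE: removing the edge $vw$ gives $K_1 \sqcup B$, which is nonplanar. Provided $|\V(A)| \geq 3$, this is a \emph{proper} minor of $G$, contradicting MMIE. Hence $|\V(A)| = 2$, so $A = K_2$ and $G = K_2 \dotcup B$.

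Finally, if $B$ had a proper nonplanar minor $B'$, then $K_2 \dotcup B'$ would be a proper minor of $G$, still IE by the same $vw$-deletion argument, again contradicting MMIE. Therefore $B$ is MMNP, and Kuratowski's theorem gives $B \in \{K_5, K_{3,3}\}$. The argument is mostly bookkeeping; the one subtlety is to perform the reduction on $A$ and the reduction on $B$ as \emph{separate} steps, so that properness of each intermediate minor is verified from $|\V(A)| \geq 3$ in the first step and from $B' \neq B$ in the second, mirroring the structure of the preceding disconnected MMIE theorem.
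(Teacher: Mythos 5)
Your overall strategy is the paper's: split $G$ at the cut vertex, force one side to be $K_2$ and the other to be minor minimal nonplanar. The first half is in fact a touch cleaner than the paper's version --- by exhibiting $K_2 \dotcup B$ as a proper IE minor whenever $|\V(A)| \geq 3$, you dispose of the ``both sides nonplanar'' case for free, whereas the paper rules that case out separately before arguing that the planar side is $K_2$.

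The one step that is not justified is the assertion that $K_2 \dotcup B'$ is a proper minor of $G$ for \emph{every} proper nonplanar minor $B'$ of $B$. This is precisely the point where the paper's proof does its case analysis (``if $H$ is formed by deleting the cut vertex\ldots''), and it is not automatic: if the operations producing $B'$ delete the cut vertex $v$, the pendant edge $vw$ has nowhere obvious to attach, and what you obtain directly is $K_1 \sqcup B'$ --- which need not be IE (for instance $K_1 \sqcup K_5$ is not, since deleting any edge of $K_5$ leaves a planar graph). The claim is nevertheless true because $B$ is connected: any minor model of $B'$ in $B$ can be enlarged, by absorbing a shortest path from $v$ to the union of the branch sets into the branch set it first meets, so that $v$ lies in some branch set $V_x$; then $\{w\}$ together with the edge $vw$ realizes the pendant $K_2$ attached at $x$. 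Alternatively, treat the offending operation separately: if $B-v$ is nonplanar, then $G-w = B$ is already a proper IE minor of $G$, since deleting an edge of $B$ incident to $v$ leaves $B-v$ as a nonplanar subgraph. With that point supplied, your argument is complete.
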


\begin{proof}
Let $G$ be a connected MMIE graph such that
$G-v=G_{1} \sqcup G_{2}$. Let $G'_{i}$ denote the induced subgraph on $\V(G_{i}) \cup \{v\}$. If both $G'_{1}$ and $G'_{2}$ are
nonplanar, then $G$ would not be MMIE since, for example,
there are two distinct edges $e,e' \in \E(G'_{2})$
such that $G-e,e'$ contains $G'_{1}$ and
is therefore nonplanar. If both subgraphs were planar, then $G$ would
also be planar and therefore not MMIE. So one of  $G'_{1}$ and
$G'_{2}$ is nonplanar, say $G'_1$, and the other, $G'_2$, is planar.

Then $G'_1$ is, in fact, minor minimal nonplanar (MMNP).
Otherwise, $G'_1$ has a proper nonplanar minor $N$.
We can then take a proper minor $H$ of $G$ containing $N$.
Since it contains $N$, $H$ is nonplanar. If $H$ is formed by deleting the cut vertex,
then there's an edge $e\in \E(G'_{1})$, incident to the cut vertex, such that
$G-e$ is nonplanar. Since removing $e$ does not affect $G'_{2}$,
and since $G$ is connected there must be an edge,
$e' \in \E(G'_{2} )$, such that $G-e,e'$ is nonplanar. But this
contradicts that $G$ is MMIE. If $H$ is not formed by deleting the
cut vertex, then $G'_{2}$ is a subgraph of $H$. Since $G$ is connected, there must
be at least one edge $ e\in \E(G'_{2})$ and deleting it, we have
$N$ as a subgraph of $H-e$. However, this contradicts that $G$ is MMIE since
$H-e$ contains $N$ and is therefore nonplanar.

The planar subgraph, $G'_{2}$, must be $K_{2}$. First note that as $G$ is
connected, $G'_2$ is as well.  If there are two distinct edges
$ e_{1},e_{2}\in \E(G'_{2})$, then, since $ G'_1$ is a subgraph of  $G-e_{1},e_{2}$,
$G-e_{1},e_{2}$ is nonplanar which contradicts $G$ being MMIE.
\end{proof}

\begin{theorem}
If $G$ is MMIE, then there is a unique edge $e$ such that $G-e$ is
nonplanar.
\end{theorem}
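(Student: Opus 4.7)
I plan to argue by contradiction: assume $G$ is MMIE and $e_1 \neq e_2$ are edges with both $G-e_1$ and $G-e_2$ nonplanar. Since each $G - e_i$ is a proper minor of $G$, it cannot itself be IE, so $(G-e_i)-f$ is planar for every $f\in \E(G)\setminus\{e_i\}$. Together with $G - e_i$ being nonplanar, this forces $G - e_i$ to be edge-minimal nonplanar, and hence a Kuratowski subdivision $L_i$ together with (possibly) some isolated vertices. Moreover $G$ has no isolated vertex, since removing one would yield a smaller IE proper minor; thus any isolated vertex of $G - e_i$ must be an endpoint of $e_i$ of $G$-degree one.

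Next I would reduce to the $2$-connected setting. If $G-e_1$ has an isolated vertex then $e_1$ is either a disjoint or a pendant edge, placing $G$ in the disconnected or cut-vertex cases already classified: $G = K_2 \sqcup H$ or $G = K_2 \dotcup H$ with $H \in \{K_5,K_{3,3}\}$. In either form every edge different from the $K_2$ edge lies inside $H$, and its deletion leaves the planar $H - f$ beside the untouched $K_2$, still planar; this contradicts $G - e_2$ being nonplanar. So I may assume $\V(L_1) = \V(L_2) = \V(G)$ and $\E(G) = \E(L_1)\cup\{e_1\} = \E(L_2)\cup\{e_2\}$.

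In this remaining case the task is to exhibit an IE proper minor of $G$. I first dispatch the degenerate sub-case that $L_2$ has no subdivision vertex, i.e.\ $L_2 \in \{K_5, K_{3,3}\}$. $L_2 = K_5$ is impossible because $K_5$ on $\V(G)$ is complete and leaves no room for $e_2$. If $L_2 = K_{3,3}$, an edge count forces $L_1$ also to be $K_{3,3}$ on the same six vertices; but a short degree analysis shows $K_{3,3} - e_1 + e_2$ cannot equal $K_{3,3}$ for distinct $e_1, e_2$ since the endpoints of $e_1$ would then have degree two and those of $e_2$ degree four. Thus $L_2$ contains at least one subdivision vertex $s$, and walking to an end of the subdivided path through $s$ yields a branch vertex $b$ adjacent to $s$ in $L_2$.

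The key step is to contract the edge $sb$. Because $s$ is a subdivision vertex of $L_2$ adjacent to the branch vertex $b$, the edge $sb$ sits on a subdivided path of length at least two, so $L_2 / sb$ is again a subdivision of the same Kuratowski graph and is nonplanar. Since $sb \in L_2$ while $e_2 \notin L_2$ we have $sb \neq e_2$, and therefore $(G/sb) - e_2 = (G-e_2)/sb = L_2/sb$ is nonplanar. Thus $G/sb$ is IE, and being a proper minor of $G$ it contradicts $G$ being MMIE. The step I expect to be the main obstacle is exactly the no-subdivision sub-case above, where the contraction trick is unavailable and the argument must instead lean on edge-count and degree constraints.
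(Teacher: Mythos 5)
Your strategy is genuinely different from the paper's. The paper takes Kuratowski subgraphs $H\subseteq G-e$ and $H'\subseteq G-e'$ and derives a contradiction from a case analysis of $H\cap H'$; you instead pin down the global structure first --- since $G-e_i$ is a proper minor of the MMIE graph $G$ it is not IE, hence edge-minimal nonplanar, hence a Kuratowski subdivision $L_i$ plus isolated vertices --- and then manufacture an IE proper minor by contracting an edge of a subdivided path. The reduction to $\E(G)=\E(L_i)\cup\{e_i\}$ with $\V(L_i)=\V(G)$, the dispatch of isolated vertices through the already-classified disconnected and cut-vertex MMIE graphs, and the degree/edge count in the sub-case $L_2\in\{K_5,K_{3,3}\}$ are all correct.

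The gap is in the key contraction step. The identity $(G/sb)-e_2=(G-e_2)/sb$ presumes that $e_2$ survives as a distinct edge of the simple graph $G/sb$; under the paper's convention that one of any doubled pair created by a contraction is deleted, $e_2$ can be absorbed. Concretely, if $e_2=sy$ with $by\in\E(L_2)$, or if $e_2=bb'$ where $b,b'$ are the two $L_2$-neighbours of $s$, then contracting $sb$ identifies $e_2$ with an edge of $L_2$, and one can have $G/sb=L_2/sb$ exactly --- a Kuratowski subdivision, which is not IE, so no contradiction results. For instance, take $L_2=\ol{K_5}$ with the edge $bb'$ subdivided by $s$ and $e_2=sy$ for a third branch vertex $y$: then both contractions available at the unique subdivision vertex merge $e_2$ away and yield $K_5$. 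Such configurations are in fact incompatible with $G-e_1$ also being a spanning Kuratowski subdivision (a degree-sequence comparison rules them out), but your argument never invokes $L_1$ in the main case, so this must be supplied; alternatively one can handle the merged case directly by deleting $sb$ rather than contracting it. (A smaller slip: a branch vertex found by walking to the end of the subdivided path need not be adjacent to $s$; take $s$ to be the last subdivision vertex on its path.) Note that the paper's proof of the very next theorem explicitly wrestles with the possibility that ``contracting $e_1$ removes $e$,'' so this is a known pitfall in this setting rather than a pedantic quibble.
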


\begin{proof}
Assume, for the sake of contradiction, that there are $e,e' \in \E(G)$ such that $e\ne e'$ but $G-e$ and $G-e'$
are nonplanar. If $G-e$ is nonplanar, then there is a subgraph of $
G-e$, $H$, with $e\notin \E(H)$, that has a $K_{5}$ or $K_{3,3}$ minor.
Likewise, if $G-e'$ is nonplanar, then it has a nonplanar subgraph $H'$ with $ e' \notin \E(H')$.
If $H'=H$, then $e'=e$. Otherwise, $G-e,e'$ would be nonplanar,
contradicting that $G$ is MMIE. So $H' \ne H$. If $e\notin H'$,
then $G-e,e'$ contains $H'$ and will be nonplanar contradicting
that $G$ is MMIE.

So, $e\in H'$ and, similarly, $e' \in H$.
If $H$ and $H'$ have empty intersection,
then let $ e_{1},e_{2}\in \E(H')$. This means $G-e_{1},e_{2}$
contains $H$ and is nonplanar. This contradicts that $G$ is MMIE.
So, $H$ and $H'$ have nonempty intersection.
If their intersection is
nonplanar, then removing $e$ and $e'$ will not change this
intersection, and $G$ is not MMIE. If their intersection is planar,
then there must be more than one edge in $H'$ that is not in $H$
besides $e$. But, if $H'$ has more edges besides $e$ that are not
in $H$ it would be possible to remove another edge, $f \neq e$,
without changing $H$. This means that $G-f,e$ is nonplanar, and
contradicts that $G$ is MMIE.

Therefore, if $G$ is MMIE, then
there is a unique edge $e$ such that $G-e$ is nonplanar.
\end{proof}

Recall that a K-subgraph is one homeomorphic to $K_5$ or $K_{3,3}$.

\begin{theorem}
If $G$ is MMIE, then the edge $e$, such that $G-e$ is
nonplanar, is not in a K-subgraph. Furthermore, $G-e$ is $
K_{5}$ or $K_{3,3}$.
\end{theorem}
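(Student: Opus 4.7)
The plan is to exploit the uniqueness of $e$ established in the previous theorem. For every edge $f \neq e$, the graph $G - f$ is planar, so any K-subgraph of $G$ must contain $f$—otherwise it would already sit inside the planar graph $G - f$. Therefore every K-subgraph $H$ of $G$ satisfies $E(H) \supseteq E(G) \setminus \{e\}$, so $E(H)$ is either $E(G) \setminus \{e\}$ or all of $E(G)$. This is the structural skeleton that drives both conclusions.

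For the first assertion, I would suppose toward contradiction that $e$ lies in some K-subgraph $H$. Then the observation above forces $E(H) = E(G)$, so $H = G$ after ignoring isolated vertices, which makes $G$ itself a subdivision of $K_5$ or $K_{3,3}$. But every edge of such a subdivision is essential for nonplanarity, so $G - e$ would be planar, contradicting the hypothesis that $G - e$ is nonplanar. Hence $e$ belongs to no K-subgraph.

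For the second assertion, let $H'$ be a K-subgraph of $G - e$. The same observation forces $E(H') = E(G) \setminus \{e\} = E(G-e)$, so $H' = G - e$ up to isolated vertices. It remains to promote $H'$ from a mere subdivision of a Kuratowski graph to $K_5$ or $K_{3,3}$ itself. Arguing by contradiction, I would suppose $H'$ has a degree-two subdivision vertex $v$ and let $f$ be one of the two edges of $H'$ incident to $v$; note $f \neq e$ since $e \notin E(H')$. Consider the proper minor $G/f$. Because $f \neq e$, deletion and contraction commute, giving $(G/f) - e = (G-e)/f \supseteq H'/f$. Since $v$ is a degree-two subdivision vertex, $H'/f$ is just a less-subdivided version of the same Kuratowski graph, hence nonplanar. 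Thus $G/f$ is IE, contradicting that $G$ is MMIE, and so $H'$ must equal $K_5$ or $K_{3,3}$.

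The main obstacle is the contraction step at the end: one needs the standard facts that deletion and contraction commute on distinct edges, and that contracting an edge at a degree-two subdivision vertex merely erases one subdivision point without changing the homeomorphism type. Both are routine, but they are what turns the uniqueness of $e$ from an algebraic constraint into the rigid structural conclusion that $G - e$ is a Kuratowski graph.
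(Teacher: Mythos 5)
Your key observation---that the uniqueness of $e$ forces every K-subgraph of $G$ to contain every edge of $G$ except possibly $e$---is correct and drives both conclusions more cleanly than the paper's case analysis of intersecting K-subgraphs; your first assertion and the identification $E(H')=E(G-e)$ are fine. The gap is in the final contraction step. Under this paper's convention, $G/f$ is kept simple by discarding one edge of any parallel pair created by the contraction, so ``deletion and contraction commute on distinct edges'' is exactly the point that can fail: if $e$ shares an endpoint with $f=uv$ and the other endpoint of $e$ is adjacent to the other of $u,v$, then $e$ is absorbed into a parallel edge and is no longer an edge of $G/f$. In that situation $G/f$ can simply equal $(G-e)/f=H'/f$, a Kuratowski subdivision, which is nonplanar but \emph{not} IE, and your contradiction evaporates. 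Concretely, with $v$ of degree two in $H'$ and neighbors $u,w$, the bad configurations are $e=uw$ (a triangle through $v$) and $e=vz$ with $uz\in E(H')$; in the latter case you could contract $vw$ instead, but that fails too when $wz\in E(H')$ as well.

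These cases can be closed, but they need an argument you have not supplied: in each one the offending adjacencies let you reroute a path of $H'$ through $e$ (for instance, replace the path $u$--$v$--$w$ of $H'$ by the edge $uw=e$; in the $e=vz$ case with $uz,wz\in E(H')$ one checks that $u,w,z$ must be branch vertices of a $K_5$-subdivision and then exhibits a $K_{3,3}$-subdivision of $G$ using $e$). Either way one obtains a K-subgraph of $G$ containing $e$, contradicting your first assertion. This is precisely the situation the paper's proof devotes its final paragraph to (``if contracting $e_1$ removes $e$\,\dots''), so the omission is not cosmetic; with that case added, your argument is a genuine and somewhat slicker alternative to the paper's.
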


\begin{proof}
Assume, for the sake of contradiction, that $e$
is in a K-subgraph, $H$. Since no graph homeomorphic
to $K_{5}$ or $K_{3,3}$ is IE, $G-e$ is
planar unless $G-e$ contains some other K-subgraph, $H'$.
However, if $G$ contains two K-subgraphs, $H$ and $H'$ with empty
intersection, then $G-e$ will leave $H'$
unchanged. One could then remove a second edge, $f\in \E(H)$, leaving $
H'$ unchanged so that $G-e,f$ nonplanar. This means that $G$ cannot be
MMIE since $G$ would have an IE minor $G-e$. So, $H$ and $H'$ have non empty
intersection.

But $H\ne H'$ since $e$ cannot be an edge in the only
K-subgraph, otherwise $G-e$ is planar. Next, observe that any proper subgraph of a K-subgraph is
planar. This means that for the K-subgraph, $H'$, with $H\ne
H'$, there must be an edge, $g \ne e$, with $g \in \E(H')$ and $g \notin \E(H)$.
Then $G-g$ contains $H$ and is nonplanar. This
contradicts the uniqueness of the edge $e$ and shows $e$ is not in a 
K-subgraph.

Following the same argument as above, $G$ cannot contain more
than one K-subgraph. Indeed, if there were distinct K-subgraphs
$H$ and $H'$, then either the intersection is empty or it is not, and
we achieve similar contradictions as in the
previous argument. So, $G$ contains exactly one K-subgraph.

Finally, the only possible K-subgraph contained in $G$,
$N$, must contain all edges besides $e$. If not, then
there is an edge $ e'\ne e$ such that $G-e'$ is nonplanar. This contradicts
the uniqueness of $e$.
Also, the K-subgraph, $N$, in $G-e$,
must be either $K_{5}$ or $K_{3,3}$. If not, then $N$ would be
a subdivision of either $K_{5}$ or $K_{3,3}$. But, then there is a proper
minor, $G'$, of $G$, by contracting an edge, $e_{1}\in \E(N)$,
which contains a K-subgraph as well.
Provided $e$ remains as an edge of $G'$,
$G'-e$ is nonplanar, contradicting that $G$ is minor minimal.
On the other hand, if contracting $e_{1}$ removes $e$,
then there must be another edge  $e_2$
incident to $e_{1}$, with $e_{2}\in \E(N)$, such that $e$ is incident to
both $e_{1}$ and $e_{2}$. Since $N$ is a subdivision of $K_{5}$
or $K_{3,3}$ and $G/e_{1}$ is nonplanar, $e_{1}$ and $e_{2}$
must be in a path of $N$ formed by subdividing an edge of
the underlying Kuratowski graph. Since $e$ is incident to
both $e_{1}$ and $e_{2}$, $  $ there exists $ N'$, another 
K-subgraph
of $G$ with $e\in E(N')$. This contradicts that there is only one
K-subgraph of  $G$.

So, if $G$ is MMIE then it is made up of either $K_{5}$ or $
K_{3,3}$ and an edge that is not in this K-subgraph.
\end{proof}

\begin{figure}[htb]
\begin{center}
\includegraphics[scale=0.3]{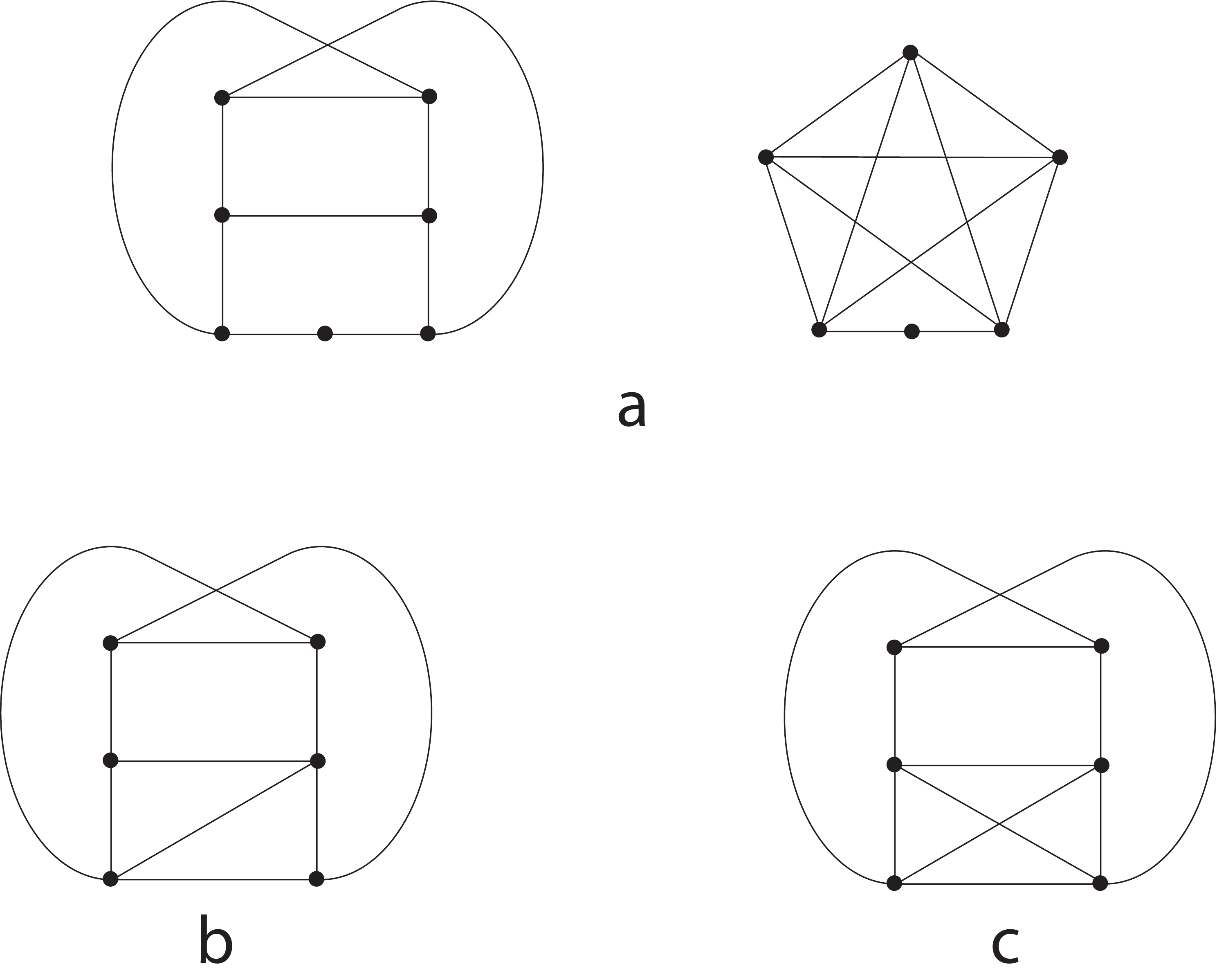}
\caption{MMIE and MMIC graphs.}\label{figMMIEC}
\end{center}
\end{figure}

Aside from the disconnected and connectivity one examples above, a final way to add an edge to a K-subgraph is the graph $K_{3,3}+e$ of Figure~\ref{figMMIEC}b, formed by adding an edge to the bipartite graph $K_{3,3}$.

\begin{cor}
There are five MMIE graphs: $K_{3,3}+e$ and $K_{2}\sqcup G_{2}$, $K_{2} \dot{\cup}  G_2$, where $G_2\in \{K_5, K_{3,3}\}$.
\end{cor}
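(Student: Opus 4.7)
The plan is to combine the four preceding theorems with a direct sufficiency check. The disconnected-MMIE and cut-vertex-MMIE theorems already provide four of the claimed graphs: $K_{2}\sqcup K_{5}$, $K_{2}\sqcup K_{3,3}$, $K_{2}\dotcup K_{5}$, and $K_{2}\dotcup K_{3,3}$. Each is easily seen to be IE (remove the $K_{2}$-edge) and MMIE (any proper minor either simplifies the MMNP summand to a planar graph, or discards the $K_{2}$-edge to leave $K_{5}$ or $K_{3,3}$, which is not IE since it is MMNP).

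For the remaining 2-connected case, the uniqueness-of-edge and structural theorems give a unique edge $e$ with $G-e$ nonplanar, $e$ in no K-subgraph, and the unique K-subgraph of $G$ equal to $K_{5}$ or $K_{3,3}$. Since an MMIE graph cannot have an isolated vertex (deleting such a vertex yields a proper IE minor), we have $G = H+e$ with $H \in \{K_{5}, K_{3,3}\}$ and both endpoints of $e$ lying in $V(H)$. The case $H = K_{5}$ is vacuous because $K_{5}$ is complete and admits no additional simple edge. For $H = K_{3,3}$, the edge $e$ joins two vertices of a single partition class, and the automorphism group of $K_{3,3}$ acts transitively on such pairs, so $K_{3,3}+e$ is unique up to isomorphism.

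The principal remaining obstacle is verifying that $K_{3,3}+e$ is itself MMIE. IE is immediate, as $(K_{3,3}+e)-e = K_{3,3}$ is nonplanar. For minor minimality I would check that every proper minor is planar (hence not IE), using symmetry to trim the casework. The automorphism group of $K_{3,3}+e$ (swap the endpoints of $e$; permute the three vertices of the opposite partition class) has three vertex orbits and three edge orbits, so one treats only one representative of each under edge-deletion, edge-contraction, and vertex-deletion. Each resulting graph has at most six vertices and is dispatched in a line: either fewer than five vertices have degree at least four, ruling out a $K_{5}$-subdivision, together with a short bipartition analysis forbidding a $K_{3,3}$-subdivision; or the graph simply has too few edges to be nonplanar. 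Assembling these checks confirms the five MMIE graphs listed.
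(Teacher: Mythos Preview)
Your approach matches the paper's: assemble the preceding theorems, note that the disconnected and cut-vertex theorems supply four of the five graphs, and in the remaining (2-connected) case use the structural theorem to force $G=K_{3,3}+e$, then verify directly. Two small slips are worth correcting.

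First, in the 2-connected case you justify ``both endpoints of $e$ lie in $V(H)$'' by ruling out isolated vertices. That only excludes the possibility that \emph{neither} endpoint lies in $H$; if exactly one endpoint lay outside $H$ it would have degree one, not zero. The clean justification is that you are already in the 2-connected case (disconnected and cut-vertex cases having been handled), so $\delta(G)\ge 2$, which forces both endpoints into $V(H)$.

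Second, you assert that every proper minor of $K_{3,3}+e$ is planar. This is false on its face: $(K_{3,3}+e)-e = K_{3,3}$ is a nonplanar proper minor, as you yourself observed two sentences earlier when checking IE. What you actually need is that every proper minor is \emph{not IE}. For the minor $K_{3,3}$ this holds because $K_{3,3}$ is MMNP, so deleting any of its edges yields a planar graph; for the remaining edge-deletions and all edge-contractions your orbit-by-orbit planarity check does go through. With these two fixes the argument is complete.
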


Let $\ol{K_5}$ (respectively $\ol{K_{3,3}}$) denote the graph obtained from
$K_5$ (resp.\ $K_{3,3}$) by subdividing a single edge,
as in Figure~\ref{figMMIEC}a. We denote as $K_{3,3}+2e$
the graph given by adding two
edges to $K_{3,3}$ as in Figure~\ref{figMMIEC}c.

\begin{theorem}
There are seven MMIC graphs: $K_{3,3} +2e$, $\ol{K}$, $K_2 \sqcup K$, or $K_{2} \dotcup K$ with $K \in \{K_5, K_{3,3}\}$.
\end{theorem}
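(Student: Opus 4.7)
The plan is to follow the three-part structure of the MMIE classification, treating disconnected, cut-vertex, and $2$-connected MMIC graphs in turn. Each of the seven listed graphs is IC via a specific contraction---the sole $K_2$-edge in $K_2 \sqcup K$ or $K_2 \dotcup K$, a subdivision half-edge in $\ol{K}$, or the bipartite edge of $K_{3,3}+2e$ whose endpoints are disjoint from the two added edges (which contracts to $K_5$)---and a direct check on proper minors confirms minor-minimality in each case.

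For the disconnected case, the MMIE arguments transfer almost verbatim: both components cannot be planar (else $G$ is not IC) and cannot both be nonplanar (else reducing one gives a proper IC minor), so one component is planar and the other nonplanar. The new feature versus MMIE is that $G_1 = K_1$ no longer suffices for IC, since then every edge lies in the nonplanar component $G_2 \in \{K_5, K_{3,3}\}$ and $G_2/e$ is planar in each case. Hence $G_1$ must contain an edge, and minor-minimality forces $G_1 = K_2$ and $G_2 \in \{K_5, K_{3,3}\}$. An analogous argument on the two sides of a cut vertex yields $G = K_2 \dotcup K$ with $K \in \{K_5, K_{3,3}\}$.

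The main work is the $2$-connected case. Fix $e = uv$ with $G/e$ nonplanar, let $H$ be a Kuratowski subgraph of $G/e$, and write $x$ for the contracted vertex. A subdivision-reduction argument (contracting any subdivision half-edge of $H$ lifts to a proper IC minor of $G$) forces $H$ to be exactly $K_5$ or $K_{3,3}$. The case $x \notin V(H)$ is excluded by noting that $H$ then embeds into $G$ avoiding both $u$ and $v$; since $v$ has some neighbor $w \neq u$ by $2$-connectedness, one checks that $(G - u)/(vw)$ still contains $H$ and is nonplanar, making $G - u$ an IC proper minor. So $x \in V(H)$. If $x$ is a subdivision vertex of the underlying Kuratowski graph, the lift of $H$ through $e$ makes $G$ itself a subdivision, which by minor-minimality is either $\ol{K_5}$ or $\ol{K_{3,3}}$. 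If $x$ is a branch vertex of degree $d \in \{3,4\}$, the $d$ half-edges at $x$ partition into sizes $(k, d-k)$ between $u$ and $v$; the extreme splits $k \in \{0, d\}$ force one of $u, v$ to have degree $1$ in the lifted structure and hence, after removing any extra edge demanded by $2$-connectedness, contradict MMIC, while the splits $k \in \{1, d-1\}$ make one of $u, v$ a degree-$2$ subdivision vertex, again yielding $\ol{K_5}$ or $\ol{K_{3,3}}$.

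The only remaining possibility is $d = 4$ with $k = 2$, and this is the main obstacle of the proof. Letting $w_1, w_2$ be the branch vertices of $K_5 - x$ adjacent to $u$ and $w_3, w_4$ those adjacent to $v$, I would verify by direct identification that the bipartition $\{u, w_3, w_4\} \mid \{v, w_1, w_2\}$ exhibits a $K_{3,3}$ inside $G$---the nine bipartite edges being $e$, the four half-edges at $x$ (i.e.\ $uw_1, uw_2, vw_3, vw_4$), and the four crossing $K_4$-edges $w_iw_j$ with $i \in \{1,2\}$, $j \in \{3,4\}$---while the two remaining $K_4$-edges $w_1w_2$ and $w_3w_4$ lie inside the two parts, producing the ``$+2e$.'' A final check that minor-minimality rules out any further edges or subdivision vertices in $G$ beyond this minimal lift then completes the classification.
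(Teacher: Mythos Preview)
Your argument is correct, but the paper reaches the same conclusion by a shorter and more unified route. Rather than splitting the connected case into $\kappa(G)=1$ and $\kappa(G)\geq 2$, the paper treats all connected $G$ at once. After fixing $e$ with $G/e$ nonplanar and a K-subgraph $H\subset G/e$, it argues in two strokes: (i) if $H$ is a proper subdivision then $\ol{K_5}$ or $\ol{K_{3,3}}$ is already a proper minor of $G$, so $H$ is exactly Kuratowski; (ii) if $V(H)\neq V(G/e)$ then an extra vertex together with one of its edges exhibits $K_2\dotcup K$ (or $K_2\sqcup K$) as a proper minor of $G$. Hence $G/e$ has vertex set $V(H)$, and $G$ is obtained from the Kuratowski graph $H$ by the single vertex split that undoes the contraction of $e$. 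The five connected MMIC graphs are then recognized \emph{at once} as precisely the vertex splits of $K_5$ and $K_{3,3}$, and minor-minimality forces $G$ to equal that split. Your $(k,d-k)$ case analysis is exactly this vertex-split enumeration done by hand, and your $K_{3,3}+2e$ identification for the $(2,2)$ split is the same observation, but the paper avoids the separate $x\notin V(H)$ argument, the separate cut-vertex treatment, and the elimination of the extremal splits via $2$-connectedness.

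Two small points on your write-up. First, once you have established that $H$ is exactly $K_5$ or $K_{3,3}$, the sentence ``If $x$ is a subdivision vertex of the underlying Kuratowski graph\ldots'' is vacuous---every vertex of $H$ is a branch vertex---so that case should simply be dropped. Second, you never explicitly prove $V(G/e)=V(H)$; your argument instead relies on the lifted $H'$ being an IC subgraph of $G$ and invoking minor-minimality at the end (``a final check\ldots''). This works, but it is worth stating clearly that this single step simultaneously rules out extra vertices, extra edges, and (in the $2$-connected case) the $(0,d)$ splits whose lift $K_2\dotcup K$ would otherwise only be a proper subgraph.
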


\begin{proof}
Observe that these seven graphs are MMIC. If $G$ is MMIC and disconnected, then
$G$ is a $K_2 \sqcup K$ with $K$ a Kuratowski graph. We omit the proof which is similar to that for MMIE. Note that the remaining five graphs are precisely the graphs that result when a vertex of a Kuratowski graph is split.

Suppose $G$ is MMIC and connected. Then there is an
edge $e$ such that $G/e$ is nonplanar. Since contracting an edge will not disconnect the graph, $G/e$ is also connected and has a K-subgraph $H$. If $H$ is not a Kuratowski graph, then it has $\ol{K_5}$ or $\ol{K_{3,3}}$ as a minor, contradicting $G$ being minor minimal. Therefore, $H$ is Kuratowski.

If $\V(H) \neq \V(G/e)$, then
since $G/e$ is connected, any vertex in $G/e$ beyond those in $H$, along with one
of its edges  shows that $G/e$
contains $K_2 \sqcup K$ or $K_2 \dotcup K$, with $K$ Kuratowski,
contradicting $G$ minor minimal. So, $\V(H) = \V(G/e)$.

Now $G$ is obtained from $G/e$ by a vertex split. The corresponding vertex split
on $H$ gives rise to a graph $H'$, which is one of the five graphs $K_{3,3}+2e$, $\ol{K}$, or $K_2 \dotcup K$. Since $G$ is minor minimal, then $G = H'$ and is one of these five, hence one of the seven.
\end{proof}

\section{MMNA graphs}
In this section we describe several partial results toward a classification of the MMNA graphs with a focus on graph connectivity. In all, we describe 36 MMNA graphs including all those of connectivity at most one ($\K(G) \leq 1$).
For graphs with $\K(G) = 2$, where 
$\{a,b\}$ 
is a 2--cut, we classify
the MMNA graphs having $ab \in \E(G)$ as well as those for which
a component of $G-a,b$ is nonplanar.
We also
show that $\K(G) \leq 5$ for MMNA graphs, 
which is a sharp bound.
Since the family of apex graphs is minor closed, the MMNA graphs are a Kuratowski set.

We first bound the minimum degree, $\delta(G)$, of an MMNA graph and
then classify the examples with $\K(G) \leq 1$.

\begin{theorem}
\label{thmmmnamindeg3}
    The minimum vertex degree in an MMNA graph is at least three.
\end{theorem}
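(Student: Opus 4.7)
The plan is a case analysis on $\deg(v)$, assuming for contradiction that $G$ is MMNA with a vertex $v$ of degree less than three. In each case I will exhibit a proper minor $M$ of $G$ that must be apex (since $G$ is MMNA), and then lift an apex vertex of $M$ back to a vertex $u \in \V(G)$ with $G-u$ planar, contradicting that $G$ is NA.

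If $\deg(v)=0$, take $M=G-v$; apexness of $M$ yields $u$ with $M-u$ planar, and since $G-u$ is just $M-u$ together with the isolated vertex $v$, it is planar as well. If $\deg(v)=1$, with unique incident edge $e=vw$, take $M=G-e$ and let $u$ be an apex vertex of $M$. If $u\in\{v,w\}$ then $G-u=M-u$ is planar; otherwise $v$ is a leaf of $G-u$, so planarity of $(G-u)-v=M-u$ lifts to planarity of $G-u$. Either way NA is contradicted.

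The substantive case is $\deg(v)=2$. Let $a,b$ be the neighbors of $v$, set $e=va$, and take the proper minor $M=G/e$, writing $x$ for the identified vertex. Apexness of $M$ gives some $u\in\V(M)$ with $M-u$ planar. If $u=x$ then $M-u=G-v-a$, and because $v$ is a leaf in $G-a$ we conclude $G-a$ is planar; if $u=b$ the same leaf-removal argument applied to $G-b$ gives $G-b$ planar. Otherwise $v$ still has degree $2$ in $G-u$, so $G-u$ is a subdivision of the graph obtained by smoothing $v$; this smoothed graph coincides with $M-u$ after simplifying the at most one resulting parallel edge, and subdivision preserves planarity, hence $G-u$ is planar.

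The main obstacle is this last subcase, where the key tool is that smoothing a degree-$2$ vertex preserves planarity (modulo simplifying a parallel edge), which allows us to lift planarity from the edge-contracted minor back to a vertex deletion of $G$. In every case we have produced $u\in\V(G)$ with $G-u$ planar, contradicting that $G$ is NA, so $\delta(G)\geq 3$.
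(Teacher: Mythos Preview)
Your proof is correct and follows the paper's strategy (case analysis on $\deg v$, using the contraction minor $G/e$ in the degree-two case); you are in fact more careful than the paper in the degree-two step, explicitly treating the subcases $u=x$ and $u=b$ that the paper's wording (``$f$ remains as an edge in $(G/e)-w$'') glosses over. One cosmetic slip: in the degree-one case with $u\notin\{v,w\}$ you should have $(G-u)-v=(M-u)-v$ rather than $M-u$, since $v$ survives in $M=G-e$ as an isolated vertex---this does not affect the argument.
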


\begin{proof}
    Let $G$ be an MMNA graph.
    Suppose $G$ has some vertex $v$ of degree zero.
    Since $G$ is MMNA and $G-v$ is a proper minor of $G$,
    there is a  $w  \in \V(G)$ such that $G-v,w$ is planar.
    But the addition of a degree zero vertex to a planar graph is still planar,
    so $(G-v,w)+v = G-w$ is planar, which contradicts $G$ MMNA.

    Next suppose $G$ has some vertex $v$ of degree one.
    Since $G$ is MMNA and $G-v$ is a proper minor of $G$,
    there is a $w \in \V(G)$ such that $G-v,w$ is planar.
    But the addition of a degree one vertex (and its edge) to a planar graph is planar
    (since we can shrink down the edge incident to $v$),
    so $(G-v,w)+v = G-w$ is planar, which contradicts $G$ MMNA.

    Finally suppose $G$ has some vertex $v$ of degree two
    with edges $e$ and $f$ incident to $v$. Contracting $e$
    gives a proper minor $G/e$.
    Since $G$ is MMNA, there is a $w \in \V(G)$ such that $(G/e)-w$ is planar.
    But $f$ remains as an edge in $(G/e)-w$ and subdividing it shows that $G-w$
    is also planar, contradicting $G$ MMNA.
\end{proof}

\begin{theorem} 
\label{thmMMNAdisc}
The disconnected MMNA graphs are $K_5 \sqcup K_5$, $K_5 \sqcup K_{3,3}$, and $K_{3,3} \sqcup K_{3,3}$.
\end{theorem}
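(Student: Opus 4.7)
My plan is to first verify that each of the three listed graphs is MMNA, and then argue that any disconnected MMNA graph must have this form. To check that $H_1 \sqcup H_2$ with $H_1, H_2 \in \{K_5, K_{3,3}\}$ is NA, observe that deleting any vertex leaves at least one of the two Kuratowski components fully intact, hence the result is nonplanar. For minor minimality, any proper minor of $G = H_1 \sqcup H_2$ has the form $H_1' \sqcup H_2'$ with $H_i'$ a minor of $H_i$ and at least one inclusion proper; since $K_5$ and $K_{3,3}$ are MMNP, the corresponding $H_i'$ is planar, and then deleting any vertex from the other component (which is either Kuratowski or a planar minor thereof) produces a planar graph, showing the minor is apex.

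For the converse, I would write $G = G_1 \sqcup G_2$ and split into cases according to planarity of the components. If both $G_i$ are planar, then $G$ is planar and certainly apex, contradicting NA. If exactly one, say $G_2$, is planar, then for any $v \in \V(G_1)$ the graph $G - v = (G_1 - v) \sqcup G_2$ must be nonplanar, which forces $G_1 - v$ to be nonplanar; thus $G_1$ itself is NA. But $G_1$ is a proper minor of $G$ (since $G_2$ is nonempty), contradicting that $G$ is MMNA. Hence both $G_1$ and $G_2$ are nonplanar.

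In this remaining case, I want to show each $G_i$ is MMNP. If, say, $G_1$ has a proper nonplanar minor $H_1$, then $H_1 \sqcup G_2$ is a proper minor of $G$ in which both components are still nonplanar, so by the same argument used for the existence direction it is NA, contradicting that $G$ is MMNA. Thus $G_1$, and symmetrically $G_2$, has no proper nonplanar minor, so each is in $\{K_5, K_{3,3}\}$ by Kuratowski's theorem, yielding exactly the three graphs claimed.

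The main obstacle here is really just bookkeeping: the key observation one has to trust is that a minor of a disjoint union is a disjoint union of minors (no edge-contractions or vertex-identifications can bridge the components), so apex-ness and NA-ness of a disjoint union are controlled component by component. Once that is in hand the case analysis is routine, and no delicate structural argument beyond Kuratowski's theorem is required.
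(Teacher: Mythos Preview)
Your proof is correct and follows essentially the same approach as the paper: show the three graphs are MMNA, then for a disconnected MMNA graph rule out planar components (via a proper NA minor), and conclude that both components must be Kuratowski graphs. The only cosmetic difference is in the last step: the paper observes directly that since each $G_i$ contains a Kuratowski minor, $G$ has one of the three candidates as a minor and minor minimality forces equality, whereas you argue that each $G_i$ is itself MMNP; these are equivalent.
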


\begin{proof} First observe that these three graphs are all MMNA.
On the other hand, if $G = G_1 \sqcup G_2$ is MMNA, both components must
be nonplanar. Otherwise if $G_1$ is planar, then $G_2$ must be NA and is
a proper minor of $G$, contradicting $G$ MMNA. So each component $G_i$ has a
$K_5$ or $K_{3,3}$ minor and $G$ has one of the three candidates as a minor.
Since $G$ is minor minimal, it must be one of the three candidates.
\end{proof}

\begin{theorem} There are no MMNA graphs of connectivity one.
\end{theorem}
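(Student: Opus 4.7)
The plan is a proof by contradiction: suppose $G$ is a connected MMNA graph with cut vertex $v$ and write $G-v = G_1 \sqcup G_2$, with $G'_i$ the induced subgraph on $\V(G_i) \cup \{v\}$, so that $G = G'_1 \dotcup G'_2$ identified at $v$. I would analyze the planarity of the $G_i$ and $G'_i$ and in every configuration produce either a direct contradiction to $G$ being NA, or a proper minor of $G$ that is itself NA (contradicting minor minimality).

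Two easy cases come first. If both $G_1$ and $G_2$ are planar, then $G - v$ is planar and $v$ is an apex vertex, contradicting $G$ NA. If both are nonplanar, then $G - v = G_1 \sqcup G_2$ is itself NA, since any further vertex deletion leaves a nonplanar component intact; and $G - v$ is a proper minor of $G$, contradicting MMNA.

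Up to symmetry, this reduces the problem to the case $G_1$ nonplanar and $G_2$ planar. The key middle step is to show $G'_2$ is nonplanar. If instead $G'_2$ were planar, then for each $w \in \V(G_1)$ the graph $G - w$ is the one-point union at $v$ of $G'_1 - w$ with the planar graph $G'_2$; since such a one-point union is planar iff both summands are, the nonplanarity of $G - w$ forces $G'_1 - w$ nonplanar. Combined with $G'_1 - v = G_1$ being nonplanar, this makes $G'_1$ NA; since $\V(G_2) \neq \emptyset$, $G'_1$ is a proper minor of $G$, a contradiction.

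Finally, with $G_1$ and $G'_2$ both nonplanar, I would exhibit a disconnected proper NA minor directly. Fix Kuratowski minors $M_1 \subseteq G_1$ and $M_2 \subseteq G'_2$; because $G_2$ is planar, $v$ must lie in a branch set of $M_2$. The branch sets of $M_1$ lie in $\V(G_1)$ and those of $M_2$ lie in $\V(G_2) \cup \{v\}$, so they are vertex-disjoint in $G$. Contracting them, discarding all remaining vertices, and deleting any leftover edges from $v$ into $\V(G_1)$ realizes $M_1 \sqcup M_2$ as a minor of $G$; since $G$ is connected, $v$ has at least one neighbor in $G_1$, so at least one such edge is deleted and the minor is proper. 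By Theorem~\ref{thmMMNAdisc}, $M_1 \sqcup M_2$ is NA, giving the final contradiction. The main obstacle is this last step: one has to track $v$ carefully through the minor operations, since $v$ sits inside a branch set of $M_2$ while the cross edges from $v$ into $\V(G_1)$ must be disposed of without disturbing either Kuratowski minor.
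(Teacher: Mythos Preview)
Your proof is correct and follows essentially the same approach as the paper's: a case split on the planarity of $G_1$ and $G_2$, then in the mixed case a further split on the planarity of $G'_i$, with one branch yielding a proper NA subgraph on the nonplanar side and the other yielding a disconnected NA minor. Your labeling is swapped relative to the paper (your nonplanar $G_1$ is the paper's $G_2$), and you spell out the branch-set argument for the disconnected minor where the paper simply asserts it, but the structure and ideas are the same.
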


\begin{proof} Suppose instead $G$ is MMNA with cut vertex $a$. Then
$G - a = G_1 \sqcup G_2$. If both $G_1$ and $G_2$ are planar, then $G-a$ is planar, contradicting that $G$ is NA. If both are nonplanar, then $G$ has one of the disconnected MMNA graphs as a proper minor and is not minor minimal. So, one
of $G_1$ and $G_2$, say $G_1$, is planar, and the other, $G_2$, is not. Let $G'_i$ denote the induced graph on $\V(G_i) \cup \{a\}$. If $G'_1$ is nonplanar, then
together with $G_2$ this gives one of the three disconnected MMNA graphs as a proper minor of $G$, contradicting $G$'s minor minimality. So $G'_1$ is planar. But then $G'_2$ must be NA, which again contradicts $G$ being minor minimal.
\end{proof}

We can also give an upper bound on the connectivity of an MMNA graph.

\begin{theorem}
If $G$ is MMNA, then $\kappa(G) \leq 5$.
\end{theorem}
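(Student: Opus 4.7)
I plan to argue by contradiction, assuming $\kappa(G) \geq 6$. Then $\delta(G) \geq 6$, $|E(G)| \geq 3n$, and $n := |V(G)| \geq 7$, since $\kappa(K_6) = 5 < 6$ prevents $G$ from being $K_6$ itself.

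The first step is to exploit vertex minor-minimality. Every proper minor of $G$ is apex, so $G - v$ is apex for every vertex $v$; hence there is a partner $u = u(v)$ with $G - u - v$ planar, showing that every MMNA graph is 2-apex. With $\kappa(G) \geq 6$, the subgraph $G - u - v$ is 4-connected and planar on $n - 2$ vertices, so Euler gives $|E(G - u - v)| \leq 3n - 12$, and combined with $|E(G)| \geq 3n$ this forces $\deg_G(u) + \deg_G(v) - [uv \in E(G)] \geq 12$. The second step is to exploit edge minor-minimality. For every edge $e = xy$, the proper minor $G - e$ is apex, and its apex vertex cannot be $x$ or $y$ since $(G - e) - x = G - x$ is nonplanar by the NA hypothesis; so some $w_e \notin \{x, y\}$ has $G - w_e - e$ planar. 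Consequently $G - w_e$ is 5-connected, nonplanar, and planarized by the deletion of a single edge: Euler gives $|E(G - w_e)| \leq 3n - 8$, while 5-connectivity gives $|E(G - w_e)| \geq 5(n-1)/2$, so $n \geq 11$. A parallel analysis of the apex vertex of the proper minor $G/e$ yields either $G - x - y$ planar or $(G - w)/e$ planar for some $w \neq x, y$, further restricting the local structure.

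The main obstacle is converting these bounds and local conditions into a genuine contradiction by exhibiting a proper NA minor of $G$. The plan is to iterate the edge-apex analysis: as $e$ ranges over $E(G)$, a pigeonhole argument on the map $e \mapsto w_e$ forces some vertex $w$ to serve as $w_e$ for many edges, placing strong restrictions on the Kuratowski subdivisions of $G - w$ (all of them must use the critical edges). Combined with the 5-connectivity of $G - w$ and the apex property of $G - w$ (itself a proper minor of $G$, hence 2-apex by the first step), this should force enough structure to contract disjoint connected subgraphs of $G$ into a $K_6$ minor. Since $K_6$ is NA (because $K_6 - v = K_5$ is nonplanar) and is a proper minor of $G$ (as $\kappa(K_6) = 5 < 6 \leq \kappa(G)$ prevents $G \cong K_6$), this proper NA minor contradicts the minor-minimality of $G$, completing the proof.
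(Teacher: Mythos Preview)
Your preliminary analysis is sound—the Euler bounds on $G-u-v$ and on $G-w_e-e$ are correct, and you correctly note that the apex $w_e$ of $G-e$ cannot be an endpoint of $e$. But none of the inequalities you derive is a contradiction: you obtain $n \geq 11$ and $\deg(u)+\deg(v)\geq 12$, both perfectly compatible with $\delta(G)\geq 6$. You acknowledge this yourself (``The main obstacle is\ldots''), and your proposed completion---a pigeonhole argument on $e\mapsto w_e$ that somehow forces a $K_6$ minor---is only a hope, not an argument. You give no mechanism by which many edges sharing the same $w_e$ would produce six pairwise-touching connected subgraphs, and the assertion that 5-connectivity of $G-w$ plus the 2-apex property ``should force enough structure'' is unsubstantiated. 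As it stands this is not a proof.

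The paper's argument is much shorter and avoids any hunt for an explicit minor. The trick you are missing is to let $D$ be the largest integer such that \emph{two} vertices of $G$ have degree at least $D$ (so $D\geq 6$), which sharpens the edge count to $|E(G)|\geq 3n-6+D$. Then for any edge $e$, if $w$ is the apex of $G-e$ (not an endpoint of $e$, as you observed), the planar graph $(G-e)-w$ on $n-1$ vertices has at least $3n-7+D-\deg_G(w)$ edges, forcing $\deg_G(w)\geq D+2$. Now replay the argument with an edge $e'$ incident to $w$: the apex $w'$ of $G-e'$ is distinct from $w$ and again has $\deg_G(w')\geq D+2$. Two vertices of degree at least $D+2$ contradict the maximality of $D$. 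Your edge-apex observation was exactly the right ingredient; what you lacked was this maximal-$D$ bookkeeping that converts the degree bound on the apex into an immediate contradiction.
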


\begin{proof}
Suppose $G$ is MMNA and $\K(G) \geq 6$.
For the sake of contradiction, let $D$ be
the largest integer
so that there are two vertices $a,b \in \V(G)$ both of
degree at least $D$.
Note that $D$ is defined. In fact, since $\K(G) \geq 6$, $G$ has minimum degree at least six, so surely $D \geq 6$.
We will argue that there are two vertices with degree at least $D+2$, contradicting
our choice of $D$.
Let $v  = | \V(G) |$ be the number of vertices of $G$.  There will be $v-2$ vertices of degree at least 6 and two vertices of degree at least $D$.  A lower bound on the number of edges of $G$ is then $(6(v-2) + 2D)/2 = 3v-6 + D$.

Since $G$ is MMNA,
we can form a planar graph by deleting an edge (to get a proper minor) and then an apex vertex, which is not adjacent to the deleted edge.  For if it were adjacent to the edge, the vertex deletion would also remove the edge, making $G$ apex, a contradiction.

After deleting an edge,
$G-e$ has at least $3v -7 + D$ edges.
Next delete a vertex, $a \in \V(G)$ of degree $d$.
Then the lower bound on the number of edges in the resulting planar graph is
$3v-7 + D-d$. As this graph is planar on $v-1$ vertices, an upper bound
on the number of edges is $3(v-1) - 6$, the number of edges in a triangulation.
Thus $3v-7 + D-d \leq 3(v-1) - 6$ which implies $d \geq D+2$.

This means $a$'s degree is at least $D+2$.
However, following the argument above, if we first delete an edge incident to $a$, we deduce that there is a second vertex $b$ that is again of degree at least $D+2$.
This is a contradiction since $D$ was assumed to be the maximum such that two vertices have degree at least $D$.
Therefore, if $\K(G) \geq 6$, then $G$ is not MMNA.
\end{proof}

Note that $K_6$ is an MMNA graph of connectivity five, so the bound of the last
theorem is sharp.

The remainder of this section deals with MMNA graphs of connectivity two.
Let us fix some notation for this situation. For $G$ MMNA with cut set $\{a,b\}$, we have $G -a,b = G_1 \sqcup G_2$. Let $G'_i$ denote the induced subgraph on
$\V(G_i) \cup \{a,b\}$.

\begin{theorem}
\label{thmG1G2np}%
Let $G$ be an MMNA graph where $\kappa (G) = 2$, with cut set $\{a, b\}$. If
$G - a,b = G_1 \sqcup G_2$, then $G_1$ and $G_2$ are not both nonplanar.
\end{theorem}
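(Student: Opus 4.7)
My plan is a direct proof by contradiction that exploits the fact, just proved in Theorem~\ref{thmMMNAdisc}, that disjoint unions of two nonplanar graphs are NA.

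Suppose for contradiction that both $G_1$ and $G_2$ are nonplanar. Consider the graph $H = G - a,b = G_1 \sqcup G_2$. Since $\{a,b\}$ is a 2--cut of $G$, in particular $a,b \in \V(G)$, so $H$ is a proper minor of $G$. I would then argue that $H$ is itself NA: for any vertex $v \in \V(H)$, $v$ lies in exactly one of the two components, say $G_1$, and then $H - v$ still contains $G_2$ as a subgraph, so $H-v$ is nonplanar. Hence $H$ is an NA proper minor of $G$, contradicting the assumption that $G$ is MMNA.

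The argument is short and the only thing to double-check is that $H$ is genuinely a proper minor (guaranteed by $\kappa(G) = 2$, so $|\V(G)| > 2$ and $a,b$ actually get removed) and that disjoint union with a nonplanar graph is automatically nonplanar (the Kuratowski minor sits entirely inside one component and is unaffected by deletions in the other). I do not anticipate a real obstacle here; this theorem is essentially the $\kappa(G) = 2$ analogue of the corresponding step in the proof for connectivity one, where the same reduction to the disconnected MMNA graphs was invoked.
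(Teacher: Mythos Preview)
Your argument is correct: if both $G_1$ and $G_2$ were nonplanar, then $G-a,b = G_1 \sqcup G_2$ is a proper NA minor of $G$, contradicting minimality. The checks you flag (that $H$ is a genuine proper minor and that deleting a vertex from one component leaves the other nonplanar component intact) are routine.

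The paper's proof takes a different, direct route: it does not assume both components are nonplanar, but instead uses minor minimality applied to the proper minor $G-a$. Since $G-a$ must be apex, there is a vertex $c_a$ with $G-a,c_a$ planar; either $c_a=b$ (so both $G_i$ are planar) or $c_a$ lies in one component, say $G_1$, and then $G_2 \subset G-a,c_a$ is planar. Your approach is closer in spirit to the connectivity-one argument earlier in the paper (reduce to a disconnected NA minor), while the paper's argument here instead exploits the apex property of a one-vertex-deleted minor. Both are equally short; the paper's version yields the slightly stronger direct conclusion that at least one $G_i$ is planar without passing through a contradiction.
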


\begin{proof}
Let $c_a$ be an apex of $G-a$. By the assumption that $G$ is MMNA, $G-a,c_a$ is planar. If $c_a = b$, we are done because $G_1 \sqcup G_2 = G - a,b = G - a,c_a$, which would imply both $G_1$ and $G_2$ are planar.

Without loss of generality, assume $c_a \in \V(G_1)$. Since none of the edges of $G_2$ are in $G_1$ and $a,c_a \notin \V(G_2)$, it follows that $G_2$ is a subgraph of the planar graph $G - a,c_a $. Thus, $G_2$ is planar.
\end{proof}

\begin{theorem}
\label{thmgpagpb}%
	If $G$ is MMNA and $\kappa(G) = 2$ such that $G-a,b = G_1 \sqcup G_2$, then, up to relabeling, $G_1+a, G_1+b$ are planar and
	$G_2+a,\,G_2+b$ are nonplanar.
\end{theorem}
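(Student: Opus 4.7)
The plan is to derive Boolean constraints on the planarity of the four graphs $G_1+a$, $G_1+b$, $G_2+a$, $G_2+b$ that are strong enough to force the claimed pattern up to relabeling of $\{G_1,G_2\}$. The constraints will come from two sources: since $G$ is NA, both $G-a$ and $G-b$ are nonplanar; and since $G$ is MMNA, certain ``crossed'' configurations would produce a proper NA minor, which is forbidden.

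First I would show that at least one of $G_1+b$, $G_2+b$ is nonplanar, and symmetrically at least one of $G_1+a$, $G_2+a$ is nonplanar. Since $\kappa(G)=2$ with cut $\{a,b\}$, the vertex $b$ is a cut vertex of the connected graph $G-a$, with the two sides being the induced subgraphs $G_1+b$ and $G_2+b$. A $K$-subgraph is $2$-connected and hence lies entirely on one side of any cut vertex, so the $K$-subgraph witnessing that $G-a$ is nonplanar forces one of $G_1+b$, $G_2+b$ to be nonplanar. The analogous argument for $G-b$ gives the symmetric statement.

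The crucial step — and what I expect to be the main obstacle — is to rule out the ``crossed'' configuration where, say, $G_1+a$ and $G_2+b$ are both nonplanar. Assuming this, I would choose Kuratowski subgraphs $H_1 \subseteq G_1+a$ and $H_2 \subseteq G_2+b$. Their vertex sets are disjoint (contained in $\V(G_1) \cup \{a\}$ and $\V(G_2) \cup \{b\}$, respectively), so $H_1 \sqcup H_2$ is a subgraph of $G$. Because $\kappa(G)=2$, the vertex $b$ has a neighbor in $\V(G_1)$ (otherwise $a$ alone would disconnect $G$); the corresponding edge is in $\E(G)$ but in neither $H_i$, so $H_1 \sqcup H_2$ is a proper subgraph, hence a proper minor, of $G$. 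A disjoint union of two nonplanar graphs is NA — deleting any vertex leaves one nonplanar component intact — so this exhibits a proper NA minor of $G$, contradicting MMNA. The symmetric argument rules out the other crossing, ``$G_1+b$ and $G_2+a$ both nonplanar''.

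To conclude, I would combine the four resulting conditions — $G_1+a$ or $G_2+a$ nonplanar; $G_1+b$ or $G_2+b$ nonplanar; not both of $\{G_1+a, G_2+b\}$ nonplanar; not both of $\{G_2+a, G_1+b\}$ nonplanar — via a short Boolean case analysis. Starting from whichever of $G_1+a$, $G_2+a$ is nonplanar and chasing the forbidden pairs forces exactly one of two patterns: $G_1+a, G_1+b$ nonplanar with $G_2+a, G_2+b$ planar, or its swap. Both coincide, after relabeling $G_1 \leftrightarrow G_2$ if needed, with the theorem's conclusion.
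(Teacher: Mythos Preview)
Your proposal is correct. The first step (at least one of $G_i+b$ is nonplanar) and the final Boolean combination match the paper's argument essentially verbatim; the paper phrases the first step contrapositively as ``$G_1+a$ and $G_2+a$ can't both be planar, else $G-b$ would be a one-vertex union of planar graphs.''

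The genuine difference is in how the crossed configuration is excluded. The paper deletes an edge $e$ between $b$ and $G_1$, invokes MMNA to get an apex vertex $v$ for $G-e$, and then runs a four-case analysis on the location of $v$, each time exhibiting $G_1+a$ or $G_2+b$ as a nonplanar subgraph of $(G-e)-v$. Your route is cleaner: you extract disjoint K-subgraphs $H_1\subseteq G_1+a$ and $H_2\subseteq G_2+b$ and observe that $H_1\sqcup H_2$ is already a proper NA subgraph of $G$. This ties the lemma directly to the disconnected classification (Theorem~\ref{thmMMNAdisc}) and avoids the case split; the paper's version, by contrast, is self-contained and does not lean on that earlier result. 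Both are short, but yours is the more conceptual argument and is in fact the pattern the paper itself uses later (e.g.\ in the proof of Theorem~\ref{thmGp1mmnpme}).
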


We prove this with two lemmas.

\begin{lemma}
	$G_1 +a$ and $G_2+a$ can't both be planar.
\end{lemma}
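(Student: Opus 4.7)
The plan is to derive a contradiction with $G$ being NA by showing that if both $G_1+a$ and $G_2+a$ are planar, then $G-b$ is planar. Note that $G-b$ is the induced subgraph on $\V(G_1) \cup \V(G_2) \cup \{a\}$, and since the only edges joining $\V(G_1)$ and $\V(G_2)$ in $G$ pass through the cut set $\{a,b\}$, removing $b$ leaves $G-b = (G_1 + a) \dotcup (G_2 + a)$, i.e., the union of the two subgraphs identified along the single common vertex $a$.

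The key step is the standard fact that the vertex-identification $H_1 \dotcup H_2$ of two planar graphs along one vertex $v$ is again planar: choose a planar embedding of each $H_i$ in which $v$ lies on the outer face (always possible by choosing the outer face to be any face incident to $v$), then place $H_2$ inside a small disk contained in a face of $H_1$ incident to $v$ and glue the two copies of $v$. Applying this with $H_i = G_i + a$ and $v = a$ shows that $G-b$ is planar.

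But $G$ is MMNA, so in particular $G$ is NA, meaning $G-b$ is nonplanar. This contradicts the planarity of $G-b$ established above, so $G_1+a$ and $G_2+a$ cannot both be planar.

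I expect no real obstacle here; the only subtlety is the routine planarity fact about one-vertex unions, which is well known and can be cited or stated inline. The lemma will combine with a symmetric statement (replacing $a$ by $b$) and Theorem~\ref{thmG1G2np} to conclude Theorem~\ref{thmgpagpb}.
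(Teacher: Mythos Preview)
Your proof is correct and follows essentially the same approach as the paper: assume both $G_1+a$ and $G_2+a$ are planar, observe that $G-b$ is their one-vertex union at $a$, and conclude $G-b$ is planar, contradicting NA. You include a bit more justification for the planarity of a one-vertex union than the paper does, which is fine. One small aside: your closing remark about combining this with Theorem~\ref{thmG1G2np} to get Theorem~\ref{thmgpagpb} is not quite how the paper proceeds---it instead pairs this lemma with a second lemma (that $G_1+a$ and $G_2+b$ cannot both be nonplanar)---but this does not affect the correctness of your argument for the lemma itself.
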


\begin{proof}
Let $G$ be as described. Suppose both $G_1+a$ and $G_2+a$ are planar.
Since $G_1$ and $G_2$ are otherwise disjoint, $G-b = (G_1+a) \cup (G_2 +a)$ is the union of two planar graphs at only one vertex, with no new edges.
Thus, $G-b$ is planar, which is a contradiction. So it can't be that both $G_1+a$ and $G_2+a$ are planar. A similar argument could be made for $b$.
\end{proof}

\begin{lemma}
$G_1+a$ and $G_2+b$ can't both be nonplanar (up to relabeling).
\end{lemma}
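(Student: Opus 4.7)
The plan is to argue by contradiction: suppose both $G_1+a$ and $G_2+b$ are nonplanar. The first thing I would observe is that these are vertex-disjoint subgraphs of $G$, since their vertex sets $\V(G_1)\cup\{a\}$ and $\V(G_2)\cup\{b\}$ are disjoint (as $\V(G_1)\cap\V(G_2)=\emptyset$ and $a\neq b$). All edges of $G_1+a$ lie among $\V(G_1)\cup\{a\}$ and all edges of $G_2+b$ lie among $\V(G_2)\cup\{b\}$, so inside $G$ the two subgraphs share no vertex and no edge.

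Next, by Kuratowski's theorem, each of $G_1+a$ and $G_2+b$ contains $K_5$ or $K_{3,3}$ as a minor. Because the two subgraphs occupy disjoint portions of $G$, the branch sets realizing these two minors can be chosen to be disjoint, so $G$ itself has one of $K_5\sqcup K_5$, $K_5\sqcup K_{3,3}$, or $K_{3,3}\sqcup K_{3,3}$ as a minor. By Theorem~\ref{thmMMNAdisc} each of these three disconnected graphs is MMNA, hence NA. Since $\K(G)=2$, $G$ is connected and therefore cannot equal any of these disconnected graphs, so this NA minor is a proper minor of $G$. This contradicts the assumption that $G$ is MMNA, completing the argument.

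The main obstacle I anticipate is the clean bookkeeping of the disjointness: one has to be certain that the edges of $G$ lying ``between'' the two sides (the edges $ab$, $a$-to-$\V(G_2)$, and $b$-to-$\V(G_1)$, if present) play no role in either side's Kuratowski minor, which is exactly what the definitions of $G_1+a$ and $G_2+b$ guarantee. Once that is clear, the proof reduces to invoking Kuratowski's theorem twice and then the already-established classification of disconnected MMNA graphs from Theorem~\ref{thmMMNAdisc}.
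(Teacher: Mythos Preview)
Your proof is correct, but it takes a different route from the paper's. The paper argues directly from minor minimality: it picks an edge $e$ from $b$ to a vertex of $G_1$ (such an edge exists since $\kappa(G)=2$ forces $b$ to have a neighbor in $G_1$), notes that $G-e$ must be apex, and then runs a four-case analysis on the location of the apex vertex $v$ to show that $(G-e)-v$ always retains one of the nonplanar pieces $G_1+a$ or $G_2+b$ (or equals $G-b$, which is nonplanar since $G$ is NA).

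Your argument instead observes that $G_1+a$ and $G_2+b$ are vertex-disjoint subgraphs of $G$, so their Kuratowski minors combine to give one of the three disconnected MMNA graphs of Theorem~\ref{thmMMNAdisc} as a proper minor of the connected graph $G$. This is shorter and avoids the case split entirely; it is also exactly the style of argument the paper itself uses later (e.g., in Theorem~\ref{thmG1G2p} and Theorem~\ref{thmGp1mmnpme}). The trade-off is that your proof depends on Theorem~\ref{thmMMNAdisc}, whereas the paper's proof of this lemma is self-contained. Since Theorem~\ref{thmMMNAdisc} is already established at this point, your approach is a perfectly good---arguably cleaner---alternative.
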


\begin{proof}
Let $G$ be as described. Suppose both $G_1+a$ and $G_2+b$ are nonplanar.
Let $e$ be an edge between a vertex in $G_1$ and the vertex $b$.
Since $G$ is MMNA, $G-e$ is apex. So there is a vertex $v$ such that $(G-e)-v$ is planar.
If $v=a$ then $G_2+b$ is a subgraph of $(G-e)-v$, which is a contradiction since $G_2+b$ is nonplanar.
If $v \in \V(G_1)$ then again $G_2+b$ is a subgraph of $(G-e)-v$, which is a contradiction since $G_2+b$ is nonplanar.
If $v=b$ then $(G-e)-v = G-v$, which implies $(G-e)-v$ is nonplanar since $G$ is NA, so this is a contradiction.
If $v \in \V(G_2)$ then $G_1+a$ is a subgraph of $(G-e)-v$, which is a contradiction since $G_1+a$ is nonplanar.
Therefore there is no apex for $G-e$ which is a contradiction.
So our assumption was wrong and one of $G_1+a$ and $G_2+b$ must be planar.
\end{proof}

We can now prove Theorem~\ref{thmgpagpb}.

\begin{proof} (of Theorem~\ref{thmgpagpb})
Let $G$ be as described.
By the first lemma we know that at least one of $G_1+a$ and $G_2+a$ must be nonplanar.
Without loss of generality suppose $G_2+a$ is nonplanar.
Since $G_2+a$ is nonplanar, we know that $G_1+b$ must be planar by the second lemma.
Since $G_1+b$ is planar, by the first lemma we know that $G_2+b$ is nonplanar.
By the second lemma this implies that $G_1+a$ must be planar.
Therefore, up to relabeling, $G_1+a$ and $G_1+b$ are both planar, and $G_2+a$ and $G_2+b$ are both nonplanar.
\end{proof}

Going forward, we adopt the convention suggested by Theorem~\ref{thmgpagpb} and label $G_1$ and $G_2$ such that $G_1 + a$, $G_1+b$ are planar and $G_2 +a$, $G_2 + b$ are not.
Let $G$ be MMNA with cut set $\{a, b \}$. Our next goal is to classify such graphs in case $ab$ is an edge of the graph.

\begin{theorem}
\label{thmGp1Gp2np}%
If $G$ is MMNA and $\kappa(G) = 2$ with cut set $\{a,b \}$ such that
$ab\in \E(G)$, then $G'_{1}$ and $G'_{2}$ are nonplanar.
\end{theorem}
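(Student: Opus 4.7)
The plan is to handle the two assertions separately, with the $G'_2$ part essentially immediate and the $G'_1$ part requiring a contradiction argument that crucially exploits the presence of the edge $ab$.

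For $G'_2$, I would observe that $G'_2 - b$ equals $G_2 + a$, which is nonplanar by Theorem~\ref{thmgpagpb} under the convention adopted after that theorem. Since $G'_2$ contains this nonplanar induced subgraph, $G'_2$ itself is nonplanar.

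For $G'_1$, I would argue by contradiction, supposing $G'_1$ is planar. The first step is that since $\{a,b\}$ is a 2-cut we have $V(G_1) \neq \emptyset$, so deleting these vertices from $G$ yields $G'_2$ as a proper minor of $G$. By the MMNA hypothesis, $G'_2$ is apex, so there is a vertex $v \in \V(G'_2)$ with $G'_2 - v$ planar. Now $G'_2 - a = G_2 + b$ and $G'_2 - b = G_2 + a$ are both nonplanar by Theorem~\ref{thmgpagpb}, so the apex vertex must lie in $\V(G_2)$.

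The key step is then to glue the planar embeddings of $G'_1$ and $G'_2 - v$ into a planar embedding of $G - v$. Both graphs share the vertices $a, b$ and the edge $ab$. I would invoke the standard fact that in a planar graph any chosen edge can be placed on the boundary of the outer face (via a choice of unbounded face, or equivalently stereographic projection on $S^2$). Embedding $G'_1$ and $G'_2 - v$ each with $ab$ on the boundary of the outer face, I would insert the embedding of $G'_2 - v$ into the outer face of $G'_1$, reflecting if needed so that the two drawings of the edge $ab$ coincide. Because $\{a,b\}$ separates $G$, every edge of $G - v$ appears in one of the two subgraphs, so the resulting plane drawing is a planar embedding of $G - v$. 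This contradicts $G$ being NA, so $G'_1$ must be nonplanar.

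The main obstacle is the gluing step, and the reason it works at all is precisely the hypothesis $ab \in \E(G)$: two planar graphs identified along two shared vertices need not be planar in general, but sharing a whole edge guarantees compatible embeddings. This is exactly why the theorem requires $ab$ to be present.
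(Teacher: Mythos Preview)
Your argument is correct and follows the same route as the paper's proof: invoke Theorem~\ref{thmgpagpb} for $G'_2$, and for $G'_1$ use that $G'_2$ is a proper minor (hence apex), then glue the two planar pieces along the edge $ab$ to contradict NA. You are simply more explicit than the paper on two points it leaves implicit---ruling out $v\in\{a,b\}$ via Theorem~\ref{thmgpagpb}, and spelling out why the union of two planar graphs along a common edge is planar---so there is no substantive difference in approach.
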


\begin{proof}
Let $G'_{i}$ denote the induced subgraph on
$\V(G_i) \cup \{a,b\}$. By Theorem~\ref{thmgpagpb}, $G'_2$ is nonplanar.
For the sake of contradiction assume $G'_{1}$ is
planar. Since $G'_{2}$ is a proper subgraph of
$G$, there's a vertex $ v\in \V(G'_{2})$ such that $G'_{2}-v$ is planar.
But this means $G-v$ is planar and contradicts that $G$ is NA.

So if $G$ is MMNA with cut set $a,b\in \V(G)$ such
that $ab\in \E(G)$, then $G'_{1}$ and $G'_{2}$ are nonplanar.
\end{proof}

\begin{theorem}
\label{thmG1G2p}%
If $G$ is MMNA and $\kappa (G) = 2$ with cut set $\{a,b\}$ such that $ab \in \E(G)$,
then $G_1$ and $G_2$ are both planar.
\end{theorem}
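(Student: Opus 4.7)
By the labeling convention following Theorem~\ref{thmgpagpb}, $G_1 + a$ is planar, so $G_1$ (being a subgraph of $G_1 + a$) is also planar; it therefore suffices to show $G_2$ is planar. The plan is to argue by contradiction: supposing $G_2$ is nonplanar, I will construct a proper NA minor of $G$, contradicting MMNA. So fix a Kuratowski subgraph $K_2 \subseteq G_2$, which exists by Kuratowski's theorem.

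Let $H$ be the subgraph of $G$ obtained by deleting (i) every vertex of $V(G_2) \setminus V(K_2)$ and its incident edges, (ii) every edge of $G_2$ not in $\E(K_2)$, and (iii) every edge between $V(K_2)$ and $\{a,b\}$. Then $H = G'_1 \sqcup K_2$. Because $\kappa(G) = 2$ means $G$ is $2$-connected, $a$ has at least one neighbor $v \in V(G_2)$, and either $v$ (if $v \notin V(K_2)$) or the edge $av$ (if $v \in V(K_2)$) has been removed in the construction. Hence $H$ is a proper subgraph of $G$ and therefore a proper minor.

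To complete the contradiction, the key step is to verify that $H$ is NA. For $v \in \V(G'_1)$, the graph $H - v = (G'_1 - v) \sqcup K_2$ contains the Kuratowski subgraph $K_2$ and so is nonplanar. For $v \in \V(K_2)$, $H - v = G'_1 \sqcup (K_2 - v)$ contains $G'_1$, which is nonplanar by Theorem~\ref{thmGp1Gp2np} (applicable since $ab \in \E(G)$). So $H$ has no apex vertex, making it a proper NA minor of $G$ and contradicting MMNA. Therefore $G_2$ is planar.

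The crux of the argument is locating a proper minor that is itself NA. Separating $G'_1$ and $K_2$ into a disjoint union works because each component is independently nonplanar, so no single-vertex deletion can planarize the result; here the nonplanarity of $G'_1$ relies crucially on the hypothesis $ab \in \E(G)$ via Theorem~\ref{thmGp1Gp2np}, and without that edge the proposed minor $H$ might have an apex.
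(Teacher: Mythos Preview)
Your proof is correct and follows essentially the same approach as the paper: both arguments observe that $G_1$ is planar (you via Theorem~\ref{thmgpagpb}, the paper via Theorem~\ref{thmG1G2np}) and then, assuming $G_2$ nonplanar, exhibit a proper minor of $G$ that is a disjoint union of two nonplanar graphs, contradicting minor minimality. The only cosmetic difference is that the paper uses $G'_1 \sqcup G_2$ directly and invokes the disconnected MMNA classification, whereas you extract a K-subgraph $K_2 \subseteq G_2$ and verify NA by hand; neither choice changes the substance of the argument.
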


\begin{proof}
By Theorem~\ref{thmGp1Gp2np}, $G'_1$ is nonplanar. By Theorem~\ref{thmG1G2np}, without loss of generality, $G_1$ is planar.
Suppose $G_2$ is nonplanar. Then $G'_1 \sqcup G_2$ is a proper subgraph of $G$. Since $G'_1$ and $G_2$ are both nonplanar, then $G'_1 \sqcup G_2$ has a disconnected MMNA minor, contradicting that $G$ is minor minimal.
\end{proof}

\begin{theorem}
\label{thmGp1mmnp}%
If $G$ is MMNA with cut set $\{a,b\}$
such that $ab\in \E(G)$, then $G'_{1} \in \{K_{5},K_{3,3}\}$.
\end{theorem}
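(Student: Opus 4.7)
The plan is to show that $G'_1$ is minor minimal nonplanar, so that Kuratowski's theorem forces $G'_1 \in \{K_5, K_{3,3}\}$. Since $G'_1$ is already nonplanar by Theorem~\ref{thmGp1Gp2np}, it suffices to verify that each single-step proper minor $G'_1 - v$, $G'_1 - e$, and $G'_1 / e$ is planar.

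The uniform strategy is: if such a minor $M$ of $G'_1$ were nonplanar, then performing the same operation on the $G'_1$-part of $G$ produces a proper minor of $G$ that is NA, contradicting MMNA. For vertex deletion with $v \in \{a,b\}$, Theorem~\ref{thmgpagpb} hands us planarity of $G'_1 - a = G_1 + b$ and $G'_1 - b = G_1 + a$ directly. For $v \in \V(G_1)$ and for any $e \in \E(G'_1)$, I would run through the cases of which vertex $w$ is deleted to check NA-ness of $G - v$ or $G - e$: the key points are that the case $w = a$ keeps the nonplanar $G_2 + b$, the case $w = b$ keeps $G_2 + a$ (both by Theorem~\ref{thmgpagpb}), and otherwise at least one of $G'_2$ and $M$ survives intact.

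Edge contraction $G'_1 / e$ with $e \neq ab$ follows the same script. The small wrinkle is when $e = au$ or $e = bu$ is incident to a cut vertex: the merged vertex in $G/e$ initially looks like a potential apex, but its removal still leaves the nonplanar $G_2 + b$ (or $G_2 + a$) intact, since contracting $au$ does not alter the $G'_2$-side (as $u \in \V(G_1)$).

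The main obstacle is the contraction $G'_1 / ab$, where $G/ab$ really is apex: its merged vertex $z$ satisfies $(G/ab) - z = G_1 \sqcup G_2$, which is planar by Theorem~\ref{thmG1G2p}. Since the NA-minor strategy breaks here, I would finish topologically. From the vertex- and edge-deletion conclusions above, any Kuratowski subdivision $H \subseteq G'_1$ must contain every edge of $G'_1$ (otherwise the omitted edge would make $G'_1 - e$ nonplanar) and every vertex (a missing vertex would contradict either $ab \in \E(G'_1)$ if it were $a$ or $b$, or the minimum-degree bound of Theorem~\ref{thmmmnamindeg3} if it were in $\V(G_1)$). Hence $G'_1 = H$ is a subdivision of $K_5$ or $K_{3,3}$. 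To rule out subdivision (degree-$2$) vertices, note that any $v \in \V(G_1)$ has all its edges inside $G'_1$ and $\deg_G(v) \geq 3$, so it is a branch vertex; and if $a$ were a subdivision vertex, its non-$b$ neighbor $u$ would lie in $\V(G_1)$, providing an edge $au \neq ab$ whose contraction $G'_1 / au$ is still a subdivision of the same Kuratowski graph and hence nonplanar, contradicting the edge-contraction case just handled. By symmetry $b$ is also a branch vertex, so $G'_1$ has no subdivision vertices and equals $K_5$ or $K_{3,3}$.
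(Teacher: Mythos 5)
Your proposal is correct, and while it shares its engine with the paper's proof, the second half runs along a genuinely different track. The first step is identical: you show every $e\in\E(G'_1)$ is essential by arguing that if $G'_1-e$ were nonplanar then $G-e$ would be an NA proper minor (apex candidates $a,b$ fail because $G_2+a$, $G_2+b$ are nonplanar by Theorem~\ref{thmgpagpb}, candidates in $\V(G_1)$ leave $G'_2$ intact, and candidates in $\V(G_2)$ leave $G'_1-e$), which is exactly the paper's opening move. The divergence is in passing from ``$G'_1$ is a Kuratowski subdivision plus isolated vertices'' to ``$G'_1$ is $K_5$ or $K_{3,3}$.'' The paper does this in one global step: it replaces $G'_1$ by a $K_5$ or $K_{3,3}$ minor $N$ and checks that the resulting proper minor of $G$ is NA, since an apex would have to lie in $\V(N)\cap\V(G'_2)\subset\{a,b\}$ and both choices fail by Theorem~\ref{thmgpagpb}. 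You instead prove that each one-step contraction $G'_1/e$ with $e\neq ab$ is planar via the same NA-minor mechanism, and then eliminate degree-two vertices locally: vertices of $G_1$ have all their edges inside $G'_1$ and degree at least three by Theorem~\ref{thmmmnamindeg3}, and a degree-two $a$ (or $b$) would have a second incident edge $au$ with $u\in\V(G_1)$ whose contraction preserves the Kuratowski subdivision, contradicting the contraction step. Your route must confront the one contraction where the strategy genuinely fails, namely $ab$ (where the merged vertex is an apex by Theorem~\ref{thmG1G2p}), and you correctly arrange the endgame so that planarity of $G'_1/ab$ is never needed; the paper's formulation sidesteps that case entirely. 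The trade-off is that your argument is more granular and makes explicit exactly which single-step minors of $G'_1$ are forced to be planar, at the cost of the extra case analysis for contractions incident to the cut vertices, whereas the paper's wholesale replacement is shorter but leaves the choice of contracted edges implicit.
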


\begin{proof}
First observe that for any $e \in  \E(G'_1)$, $ G'_1-e$ must be planar.
Suppose instead that there is $e' \in  \E(G'_1)$ such that
$G'_1-e'$ is nonplanar. Since $G-e'$ is apex, there's a vertex
$v \in \V(G)$ such that
$(G-e')-v$ is planar. However, $v\notin \{ a,b\} $
since $G_{2}+a$ and $G_2+b$ are nonplanar by Theorem~\ref{thmgpagpb}.
If $v\in \V(G_{1})$,
then $ G'_2$ is a subgraph of $(G-e')-v$.
By Theorem~\ref{thmGp1Gp2np}, since $G'_2$ is nonplanar, $(G-e')-v$ is also nonplanar.
If $v\in \V(G_{2})$, then
$ G'_1-e'$ is a subgraph of $(G-e')-v$, and since $ G'_1-e'$ is nonplanar,
$(G-e')-v$ is nonplanar. So we have a contradiction and deduce that
for all $ e\in E( G'_1)$, $ G'_1-e$ must be planar.

Since $G'_1$ is nonplanar by Theorem~\ref{thmGp1Gp2np},
and since $G'_1-e$ is planar for all $e \in G'_1$,
it follows that $G'_1$ consists of a K-subgraph
along with some number (possibly zero) of isolated vertices.
However if $ G'_1$ is anything other than $K_5$ or $K_{3,3}$,
then $G'_1$ has a proper minor $N \in \{K_5, K_{3,3} \}$ formed
by deleting isolated vertices or contracting edges in the K-subgraph.
Then $G$ has a proper minor $G'$ such that $N$ is a subgraph of $G'$.
Since $G$ is MMNA, there exists vertex $v \in \V(G')$ that is an apex.
Since $N$ and $G'_2$ are subgraphs of $G'$ and both $N$ and $G'_2$ are nonplanar,
we have that $v \in \V(N) \cap \V(G'_2) \subset \{ a,b\}$.
However $G'_2-a=G'_2+b$ and $G'_2-b=G'_2+a$ are both nonplanar
(Theorem~\ref{thmgpagpb}) and therefore $G$ has a proper NA minor.
This contradicts $G$ minor minimal.

So if $G$ is MMNA with cut set $\{a,b\}$ such that $ab \in \E(G)$,
then $G'_1 \in \{ K_{5},K_{3,3} \}$.
\end{proof}

\begin{theorem}
\label{thmcexists}%
If $G$ is MMNA with cut set  $\{a,b\}$ such
that $ab\in \E(G)$, then there is a vertex $ c\in \V(G_{2})$
such that every $a$-$b$-path in $G'_2-ab$ passes through $c$.
\end{theorem}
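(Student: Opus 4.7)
The plan is to proceed by contradiction. Suppose no such $c$ exists; then no single vertex of $\V(G_2)$ separates $a$ from $b$ in $G'_2 - ab$, so by Menger's theorem there exist two internally vertex-disjoint $a$-$b$-paths $P_1, P_2$ in $G'_2-ab$. From this I will show that $G-ab$ is itself NA; since $ab \in \E(G)$, the graph $G-ab$ is a proper minor of $G$, contradicting that $G$ is MMNA.

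The first step is to check that $(G-ab) - v$ is nonplanar when $v \in \{a,b\} \cup \V(G_1)$. For $v \in \{a,b\}$ the graph $(G-ab)-v = G-v$, which is nonplanar because $G$ is NA. For $v \in \V(G_1)$ the graph $(G-ab)-v$ contains $G'_2 - ab$ as a subgraph; and $G'_2-ab$ in turn contains $G'_2-a = G_2 + b$, which is nonplanar by Theorem~\ref{thmgpagpb}. Hence $(G-ab)-v$ is nonplanar in these cases.

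The central case is $v \in \V(G_2)$. Since $P_1$ and $P_2$ are internally vertex-disjoint and $v \notin \{a,b\}$, the vertex $v$ is internal to at most one of them, so at least one (say $P_i$) survives as an $a$-$b$-path in $G'_2-ab-v$. The internal vertices of $P_i$ lie in $\V(G_2) \setminus \{v\}$ and are therefore disjoint from $\V(G'_1)$. Inside $(G-ab)-v$, the union $(G'_1 - ab) \cup P_i$ is then $G'_1$ with the deleted edge $ab$ replaced by the path $P_i$, i.e.\ a subdivision of $G'_1$. Since $G'_1 \in \{K_5, K_{3,3}\}$ by Theorem~\ref{thmGp1mmnp}, this is a subdivision of a Kuratowski graph and hence nonplanar, so $(G-ab)-v$ is nonplanar too.

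Combining the cases, $G-ab$ is NA, producing the required proper NA minor of $G$ and contradicting minor-minimality. The main thing to get right is the subdivision argument in the last case: it uses both the explicit structure $G'_1 \in \{K_5,K_{3,3}\}$ and the fact that one Menger path through $G_2$ supplies precisely a subdivision of the missing $ab$ edge, using only vertices disjoint from $\V(G'_1)$.
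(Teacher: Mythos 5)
Your proof is correct and follows essentially the same route as the paper: assume no such $c$ exists, locate the would-be apex of the proper minor $G-ab$, and rule out each location using Theorem~\ref{thmgpagpb} for $v\in\{a,b\}\cup \V(G_1)$ and Theorem~\ref{thmGp1mmnp} plus a surviving $a$-$b$-path for $v\in \V(G_2)$. The only cosmetic difference is that you invoke Menger's theorem to produce two internally disjoint paths, whereas the paper gets the surviving path directly from the negation of the hypothesis (for every $v\in \V(G_2)$ some $a$-$b$-path avoids $v$), so the Menger step is harmless but unnecessary.
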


\begin{proof}
Assume for the sake of contradiction that there is no
such vertex $c$. Since $G$ is MMNA, $G-ab$ must have some apex $v$. If
$v\in \{ a,b \}$, then $(G-ab)-v=G-v$. This would mean that $G$ has an
apex, and contradicts that $G$ is NA. If $v\in \V(G_{1})$, then
$(G-ab)-v$ is nonplanar as it contains $G_{2}+a$,
which is nonplanar by Theorem~\ref{thmgpagpb}.
So it must be that $v\in \V(G_{2})$.
Then $ G'_1-ab$ is a subgraph of $(G-ab)-v$. Note that,
$G'_1-ab\in \{ K_{5}-e, K_{3,3}-e\} $ since
$ G'_1\in \{ K_{5},K_{3,3}\} $ by Theorem~\ref{thmGp1mmnp}.

Since there is no $c$ vertex as described in the statement of the theorem, there
remains an $a$-$b$-path in $(G'_2-ab)-v$. Together with $G'_1-ab$ this constitutes
a nonplanar subgraph of $(G-ab)-v$ contradicting the definition of $v$ as an apex for $G-ab$.
Thus, if $G$ is MMNA with $ab \in \E(G)$, then there is a vertex $c$ such that
every $a$-$b$-path of $G'_2-ab$ passes through $c$.
\end{proof}

\begin{theorem}
\label{thmabc}%
Let $G$ be MMNA with cut set $\{a,b\}$ and $ab \in \E(G)$ and let $c \in \V(G_2)$ be such that
every $a$-$b$-path of $G'_2-ab$ passes through $c$. Then $\{a,c\}$ and $\{b,c\}$
are also cut sets.
\end{theorem}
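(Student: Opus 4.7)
The plan is to show that $\{a,c\}$ is a cut; the claim for $\{b,c\}$ will then follow by symmetry after swapping the roles of $a$ and $b$. Since $c$ is an $a$-$b$-separator in $G'_2-ab$, the graph $(G'_2-ab)-c$ decomposes into connected components $A\sqcup B\sqcup X$, where $a\in A$, $b\in B$, and $X$ collects any further components. I will produce a vertex $v\in A\setminus\{a\}$ that cannot reach $b$ in $G-a,c$, giving the desired disconnection.

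The first step is to verify $A\setminus\{a\}\neq\emptyset$, i.e., that $a$ has at least one neighbor in $\V(G_2)\setminus\{c\}$. I expect this to come from combining Theorem~\ref{thmgpagpb}, which says $G_2+a$ is nonplanar, with Theorem~\ref{thmG1G2p}, which says $G_2$ itself is planar: if $a$ had zero or one neighbor in $\V(G_2)$, then $G_2+a$ would be $G_2$ together with at most a pendant vertex attached, hence planar, contradicting the former theorem. So $a$ has at least two $\V(G_2)$-neighbors, and hence at least one such neighbor $v\neq c$. Then $v$ lies in the same component of $(G'_2-ab)-c$ as $a$, so $v\in A\setminus\{a\}$.

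For the disconnection itself, observe that any path from $v$ to $b$ in $G-a,c$ must stay within the induced subgraph on $\V(G_2)\cup\{b\}\setminus\{c\}$: the only edges between $\V(G_1)$ and $\V(G_2)$ in $G$ pass through the cut set $\{a,b\}$, and $a$ has been removed. So such a path would give a $v$-$b$-path inside $G'_2-a,c=(G'_2-ab)-a-c$. However, $v\in A$ and $b\in B$ already lie in different components of $(G'_2-ab)-c$, and deleting the further vertex $a$ (which sits in the component $A$) can only split components apart, not merge them. Hence no $v$-$b$-path exists, so $G-a,c$ is disconnected and $\{a,c\}$ is a cut set.

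The main obstacle I anticipate is the first step, where one has to guarantee that $a$ actually has a neighbor in $\V(G_2)$ other than $c$. Without the interplay between the nonplanarity of $G_2+a$ from Theorem~\ref{thmgpagpb} and the planarity of $G_2$ from Theorem~\ref{thmG1G2p}, one could not rule out the degenerate case in which every $a$-to-$\V(G_2)$ edge lands at $c$, and in that case the conclusion of the theorem would genuinely fail.
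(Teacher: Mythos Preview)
Your argument is correct and follows essentially the same approach as the paper's proof: both first use Theorem~\ref{thmG1G2p} (planarity of $G_2$) together with Theorem~\ref{thmgpagpb} (nonplanarity of $G_2+a$) to produce a neighbor $v\in \V(G_2)\setminus\{c\}$ of $a$, and then argue that $v$ cannot reach $b$ in $G-a,c$ since any such path would yield an $a$--$b$ path in $G'_2-ab$ avoiding $c$. The only cosmetic difference is that the paper exhibits the disconnection via a vertex $v_1\in \V(G_1)$ versus $v$, whereas you use $v$ versus $b$ directly; both reduce to the same ``no $v$--$b$ path'' claim.
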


\begin{proof}
First we show there exists some $v_2 \in \V(G_2)$
such that $v_2 \neq c$, but $v_2$ is adjacent to $a$.
Suppose instead that $c$ is the only vertex in $G_2$ adjacent to $a$.
Since $G_2$ is planar by Theorem~\ref{thmG1G2p},
and since $G_2 + a$ has only one more edge than $G_2$, $G_2+a$ is also planar.
However this contradicts Theorem~\ref{thmgpagpb}
where $G_2 + a$ is shown to be nonplanar.

So let $v_2$ be a vertex of $G_2$ that is adjacent to $a$,
but is not $c$ and take $v_1 \in \V(G_1)$.
We demonstrate there is no $v_1$-$v_2$-path in $G -a,c$. Since any path from a vertex in $G_1$ to a vertex in $G_2$ must pass through $a$ or $b$ by assumption, the supposed path from $v_1$ to $v_2$ must pass through $b$, since $a$ has been deleted. However, there cannot be a path from $b$ to $v_2$ that does not pass through $c$. Otherwise we would be able to find a path from $b$ to $v_2$ and finally to $a$ without passing through $c$, violating our assumption on $c$. We conclude that $G - a,c$ is disconnected. By an analogous argument, $\{b,c\}$ is also a cut set for $G$.
\end{proof}

In order to classify connectivity two MMNA graphs with $ab \in \E(G)$, we need to describe $G'_1$ in case $ab \notin \E(G)$.

\begin{theorem}
\label{thmGp1mmnpme}%
If $G$ is MMNA with cut set $\{a,b\}$ such
that $ab\notin \E(G)$, then $ G'_1\in
\{ K_{5}-e, K_{3,3}-e, K_{3,3} \}$ and $G'_1+ab$ is nonplanar.
\end{theorem}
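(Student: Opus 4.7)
The plan is to split the proof into two parts, paralleling the $ab\in\E(G)$ analysis of Theorem~\ref{thmGp1mmnp}. First I would establish that $G'_1+ab$ is nonplanar, then classify $G'_1$ from the structural consequences. The workhorse throughout is the following planarity splitting fact: if $H$ has a cut set $\{a,b\}$ with $H-a,b=H_1\sqcup H_2$, $ab\notin\E(H)$, and each $H'_i$ contains an $a$-$b$-path, then $H$ is planar if and only if both $H'_1+ab$ and $H'_2+ab$ are planar. One direction comes from contracting an $a$-$b$-path in one side to the single edge $ab$, exhibiting $H'_j+ab$ as a minor of the planar $H$; the other from embedding each side with $ab$ on the outer face and pasting along $ab$. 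The needed $a$-$b$-paths in $G'_1$ and $G'_2$ exist because $\K(G)=2$ forces $G-a$ and $G-b$ to be connected.

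For the first claim I would assume $G'_1+ab$ is planar and derive a contradiction via $G'_2+ab$. This graph is a proper minor of $G$ (obtained by contracting an $a$-$b$-path through $G'_1$ down to the edge $ab$ and deleting the rest of $G_1$), so by MMNA it is apex. But $(G'_2+ab)-a=G_2+b$ and $(G'_2+ab)-b=G_2+a$ are both nonplanar by Theorem~\ref{thmgpagpb}, so the apex must be some $v\in\V(G_2)$, giving $(G'_2-v)+ab$ planar. Combined with the assumption that $G'_1+ab$ is planar, the splitting fact (or a direct argument when $\{v\}$ disconnects $a$ from $b$ in $G'_2$, in which case $G-v$ is $G'_1$ attached at the single vertices $a$ and $b$ to planar components) shows $G-v$ is planar, making $v$ an apex of $G$ and contradicting NA.

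For the classification I would mimic Theorem~\ref{thmGp1mmnp}. For each $e\in\E(G'_1)$, MMNA applied to $G-e$ produces an apex $v$; the case analysis used there (the choices $v\in\{a,b\}$ and $v\in\V(G_1)$ leave a nonplanar subgraph of $(G-e)-v$) forces $v\in\V(G_2)$, and the splitting fact applied to the planar $(G-e)-v$ gives $(G'_1+ab)-e$ planar. Thus $G'_1+ab$ is nonplanar while deleting any edge other than $ab$ restores planarity, so any K-subgraph $H\subseteq G'_1+ab$ contains every edge of $G'_1$, hence $H\in\{G'_1,\,G'_1+ab\}$. Theorem~\ref{thmmmnamindeg3} ($\delta(G)\geq 3$) confines the possible subdivision vertices of $H$ to $\{a,b\}$. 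To eliminate proper subdivisions, if say $a$ is a subdivision vertex with unique $G_1$-neighbor $u$, contracting $au$ in $G$ yields a proper minor $G''$ whose ``side~1'' is a Kuratowski graph; a direct check over the four vertex types (using that $G_2+a$, $G_2+b$, and the Kuratowski graph are all nonplanar) shows $G''$ is NA, contradicting MMNA. This forces $H$ to be a Kuratowski graph itself: either $H=G'_1=K_{3,3}$ (the case $K_5$ being excluded because it would force $ab\in\E(G)$), or $H=G'_1+ab\in\{K_5,K_{3,3}\}$, giving $G'_1\in\{K_5-e,\,K_{3,3}-e\}$.

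The main obstacle is formulating the planarity splitting fact cleanly and handling the boundary scenario in which the apex $v$ disconnects $a$ from $b$ in $G'_2$, where the splitting fact must be replaced by a short direct embedding argument. Once this bookkeeping is in place, each half of the theorem reduces to a minor-minimality argument closely parallel to the $ab\in\E(G)$ case, with the added subtlety that the virtual edge $ab$ must be tracked in both the K-subgraph analysis and the degree count at $a$ and $b$.
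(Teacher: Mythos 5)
Your first half (showing $G'_1+ab$ is nonplanar) is sound and is essentially the paper's argument in contrapositive form: the paper replaces $G'_1$ by the edge $ab$ and shows the resulting $H'=G'_2+ab$ is a proper NA minor, while you show that an apex $v\in\V(G_2)$ of $G'_2+ab$ would serve as an apex of $G$ by gluing the two planar pieces along $ab$. That gluing direction of your splitting fact needs no $a$-$b$-path hypothesis, so the boundary case you worry about there is harmless.

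The classification half, however, has a genuine gap. You want to show $(G'_1+ab)-e$ is planar for every $e\in\E(G'_1)$ by taking an apex $v\in\V(G_2)$ of $G-e$ and applying the splitting fact to the planar graph $(G-e)-v$. But that direction of the splitting fact requires an $a$-$b$-path in $G'_2-v$ to contract onto the virtual edge $ab$, and such a path need not exist: $v$ may lie on every $a$-$b$-path of $G'_2$. This is not a vacuous worry --- Theorem~\ref{thmcexists} shows that in the $ab\in\E(G)$ case such a vertex always exists, and Theorem~\ref{thmnoc}, which rules it out, is proved only \emph{after} and \emph{using} the conclusion of the present theorem, so you cannot invoke it. When $G'_2-v$ has no $a$-$b$-path, $(G-e)-v$ planar gives only $G'_1-e$ planar, not $(G'_1-e)+ab$ planar, and no ``short direct embedding argument'' recovers the stronger statement. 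The paper avoids this entirely: it never proves per-edge planarity of $G'_1+ab$, but instead takes a K-subgraph $N$ of the nonplanar $G'_1+ab$, notes $ab\in\E(N)$ (else $G'_1$ is nonplanar and one lands in the $K_{3,3}$ case), replaces $G'_1$ by $N-ab$, and verifies the resulting proper minor is NA by re-routing K-subgraphs of $G-v$ through $N-ab$ via Lemma~\ref{lempath}. A second, smaller soft spot: in your subdivision-elimination step you contract $au$ and assert a ``direct check'' that the minor is NA, but for $v\in\V(G_2)$ the nonplanarity of $(G-v)/au$ does not follow from that of $G-v$ (contraction can destroy a Kuratowski minor when $a$ and $u$ both lie on it off a common subdivided edge); here too one must re-route as in the paper's replacement argument rather than appeal to preservation under contraction.
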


\begin{proof}
Let $G -a,b = G_1 \sqcup G_2$ and
let $G_{i}'$ denote the subgraph induced by
vertices $\V(G_{i})\cup \{ a,b\}$. If $G'_1$ is nonplanar,
then $G'_1$ has a K-subgraph $N$. Form a new
graph, $H$, by replacing $G'_1$ with $N$. It is clear that $a,b \in \V(N)$ because
if not, then $G$ contains two disjoint K-subgraphs ($G_2+a$  and $G_2+b$ are nonplanar, Theorem~\ref{thmgpagpb}) and therefore
has a proper MMNA minor.

We can see that $H$ is NA.
Take $v\in \V(H)$.
If $v\in \V(N-a,b)$, then $G_2+a$ is a subgraph of $H-v$ so $H-v$ is nonplanar.
If $v\in \V(G_2)$, then $N$ is a subgraph of $H-v$ so $H-v$ is nonplanar.
And if $v\in \{a,b\}$, then either $G_2+a$ or $G_2+b$ is a subgraph of $H-v$
and therefore $H-v$ is nonplanar.
Thus, $H$ is NA. Since $G$ is minor minimal, $G'_1=H$.
As $G$ is MMNA it has no degree two vertices and
since $ab \notin \E(G)$ $G'_1 = K_{3,3}$ in this case.

Suppose next that $G'_1$ is planar. Assume for the sake of contradiction
$G'_1+ab$ is
planar and replace $G'_1$ with the edge $ab$ to form a new
graph $H'$. Equivalently, $H'=G'_2+ab$. We observe that for every $ v\in \V(H')$, $H'-v$ is nonplanar.  If $v\in \{a,b\}$, then $G_2+a$ or $G_2+b$ is a subgraph of $H'-v$, which is then nonplanar. On the other hand if $v \in \V(G_2)$, then since $G$ is NA, $G-v$ has
a K-subgraph $M$. However
if $|\{a,b\}\cap \V(M)|<2$, then since $G'_1$ is planar, $M$ lies wholly in $G'_2$ and we may delete $G_1$ without changing $M$. That is, $M$ is a subgraph of
$H'-v$.
If $|\{a,b\}\cap V(M)|=2$, then by Lemma~\ref{lempath} $a$ and $b$ are vertices in a path of $M$. Since
$G'_1+ab$ is planar, we may replace $G'_1$ by $ab$ to create a new
K-subgraph, $B$ in  $H'-v$. Therefore $H'$ is NA. However as $H'$ is a proper minor of $G$, this is a contradiction. We conclude $G'_1+ab$ is nonplanar.

Finally, observe that  $G'_1+ab$ is a K-subgraph. Otherwise, we may
replace it with a K-subgraph contained in $G'_1+ab$ to get a proper minor
of $G$ that is NA. Since an MMNA graph
cannot have vertices of degree 2 or less, $G'_1+ab\in \{K_5,K_{3,3}\}$.

This shows if $G$ is MMNA with cut set $\{a,b\}$ such that
$ab\notin \E(G)$, then we have $G'_1\in\{K_{5}-e,K_{3,3}-e, K_{3,3} \}$.
\end{proof}

\begin{figure}[htb]
\begin{center}
\includegraphics[scale=0.3]{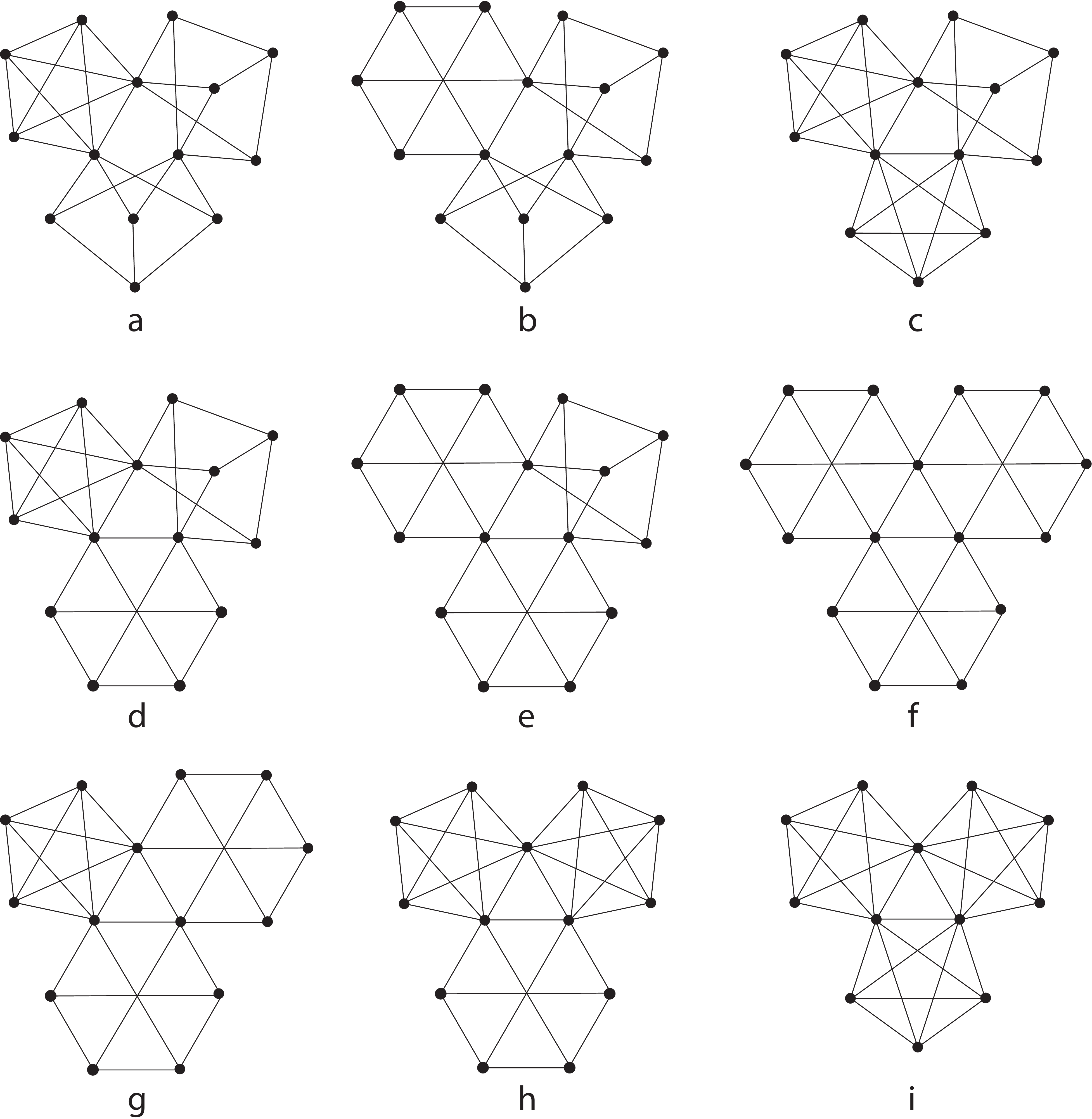}
\caption{The nine MMNA graphs with $ab \in \E(G)$.}\label{figabinE}
\end{center}
\end{figure}

\begin{theorem}
\label{thmabinE9}%
If $G$ is MMNA, $\kappa (G)=2$ with cut set $\{a,b\}$, and $ab\in \E(G)$, then $G$
is one of nine graphs shown in Figure~\ref{figabinE}.
\end{theorem}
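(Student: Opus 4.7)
The plan is to combine the structural results already established with an analogous analysis of the auxiliary 2-cuts furnished by Theorem~\ref{thmcexists}, and then to enumerate. By Theorem~\ref{thmGp1mmnp}, $G'_1 \in \{K_5, K_{3,3}\}$, giving two options for $G'_1$. By Theorem~\ref{thmcexists}, there is a vertex $c \in \V(G_2)$ such that every $a$-$b$-path in $G'_2 - ab$ passes through $c$; write $\V(G_2) = V_a \sqcup \{c\} \sqcup V_b$ for the decomposition induced by $c$, and let $A^*$ and $B^*$ denote the subgraphs of $G$ induced on $V_a \cup \{a, c\}$ and $V_b \cup \{b, c\}$ respectively. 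Then $G$ is assembled from $G'_1$, $A^*$, and $B^*$ glued along the cut vertices $a$, $b$, and $c$ (together with the edge $ab$), and by Theorem~\ref{thmabc} the pairs $\{a, c\}$ and $\{b, c\}$ are also 2-cuts of $G$.

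The next step is to pin down the possibilities for $A^*$ and $B^*$. For the cut $\{a, c\}$, I will check that $V_a$ plays the role of the planar side in the sense of Theorem~\ref{thmgpagpb} (because the other component of $G - a, c$ contains $G'_1$ as a subgraph once either cut vertex is re-added), and then apply Theorem~\ref{thmGp1mmnp} if $ac \in \E(G)$, or Theorem~\ref{thmGp1mmnpme} if $ac \notin \E(G)$, to restrict $A^*$ to a short list of Kuratowski-like graphs. The symmetric argument for the cut $\{b, c\}$ restricts $B^*$. A further pruning comes from the NA property of $G$ itself: the graph $G - b$ is obtained by gluing $G_1 + a$, $A^*$, and $B^* - b$ sequentially at the single vertices $a$ and $c$, with $G_1 + a$ and $B^* - b$ both planar. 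Since a union of planar graphs amalgamated at one shared vertex is planar, $G - b$ being nonplanar forces $A^*$ to be nonplanar; by the symmetric treatment of $G - a$, $B^*$ must also be nonplanar. This eliminates the planar options $K_5 - e$ and $K_{3,3} - e$ from the lists produced by Theorem~\ref{thmGp1mmnpme}.

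With $G'_1$ ranging over two values and each of $A^*$ and $B^*$ drawn from the refined list, and modulo the $V_a \leftrightarrow V_b$ symmetry obtained by swapping $a$ and $b$, I am left with a finite collection of candidate configurations. The principal obstacle, and the main remaining work, is to verify minor minimality for each candidate: some configurations are NA but admit a proper NA minor and must be discarded. Systematically checking every edge deletion, edge contraction, and vertex deletion eliminates these non-minimal candidates, leaving precisely the nine graphs of Figure~\ref{figabinE}.
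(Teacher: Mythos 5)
Your proposal follows essentially the same route as the paper: it invokes Theorems~\ref{thmcexists} and \ref{thmabc} to obtain the vertex $c$ and the auxiliary $2$--cuts $\{a,c\}$ and $\{b,c\}$, applies Theorems~\ref{thmGp1mmnp} and \ref{thmGp1mmnpme} to the three pieces glued along $a$, $b$, $c$, and rules out the planar options $K_5-e$ and $K_{3,3}-e$ exactly as the paper does, by noting that otherwise $G-a$ or $G-b$ would be a one-vertex amalgam of planar graphs. The only (cosmetic) divergence is at the end: the paper arrives at nine purely by identifying isomorphic configurations under the full symmetry permuting the three pieces (all surviving candidates are in fact MMNA, so none are discarded for failing minor minimality), whereas you frame the final reduction as a minimality check.
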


\begin{proof}
It is straightforward to verify that the nine graphs are MMNA.
Let $G$ be MMNA, $\kappa (G)=2$ with cut set $\{a,b\}$, and $ab\in \E(G)$.
By Theorems~\ref{thmcexists} and \ref{thmabc},
there exists a vertex $c$ such that $\{a, c\}$
and $\{b, c\}$ are also $2$--cuts for $G$. Let $H_{1}'$ play the role of $G'_{1}$ for the $\{a,c\}$ cut set. That is, $G-a,c = H_1 \sqcup J_1$ with $H_1 +a$ and $H_1+c$ planar (see Theorem~\ref{thmgpagpb}). Similarly, let
$H'_{2}$ be the $G'_{1}$ for the $\{b,c\}$ cut set. By Theorem~\ref{thmGp1mmnp}, $G'_{1} \in \{K_{3, 3},K_{5}\}$ and
by that theorem and Theorem~\ref{thmGp1mmnpme},
$H'_{i}\in \{K_{3, 3},K_{3, 3}-e,K_{5}, K_{5}-e\}$.

Note that, if $H'_{1}$ is $K_{3, 3}-e$ or
$K_{5}-e$, then $G-b$ is planar and similarly for $H'_2$.
Thus, $H'_{1}, H'_{2} \in \{K_{3,
3}, K_{5}\}$. There are three cases depending on whether
$ac, bc\in \E(G)$ or not.

First suppose that $ab$ is the only one of $ab$, $bc$, and $ac$ present in the graph.
As above, $G'_{1}$, $H'_1$ and $H'_2$ are each either $K_{3, 3}$ or $K_{5}$.
However, by Theorem~\ref{thmGp1mmnpme}, this means $H'_1 = H'_2 = K_{3,3}$.
So, there are exactly two graphs (Graphs a and b in Figure~\ref{figabinE}) of this type, depending on whether $G'_1$ is
$K_5$ or $K_{3,3}$.

Next suppose that exactly one of $ac$ and $bc$, say $ac$, is in the graph.
As in the previous case $H'_{2} $ must be $K_{3, 3}$. There are three graphs
(Graphs c, d, and e of Figure~\ref{figabinE}) of this type as $\{G'_1, H'_1 \}$ is either $\{K_5, K_5 \}$, $\{K_5, K_{3,3} \}$, or $\{K_{3,3}, K_{3,3} \}$.

Finally, suppose all three edges $ab$, $ac$ and $bc$ are in the graph.
Then, as above, $G'_1$, $H'_{1}$, and
$H'_{2}$ are each either $K_{3, 3}$, or $K_{5}$.
There are four graphs of this type, shown as Graphs f through i of Figure~\ref{figabinE}. For example, such a graph has between zero and three $K_5$'s.

This shows that the nine graphs of Figure~\ref{figabinE} are the graphs where $G$ is MMNA, $\kappa(G)=2$ with cut set $\{a,b\}$, and $ab \in \E(G)$.
\end{proof}

\begin{figure}[htb]
\begin{center}
\includegraphics[scale=0.3]{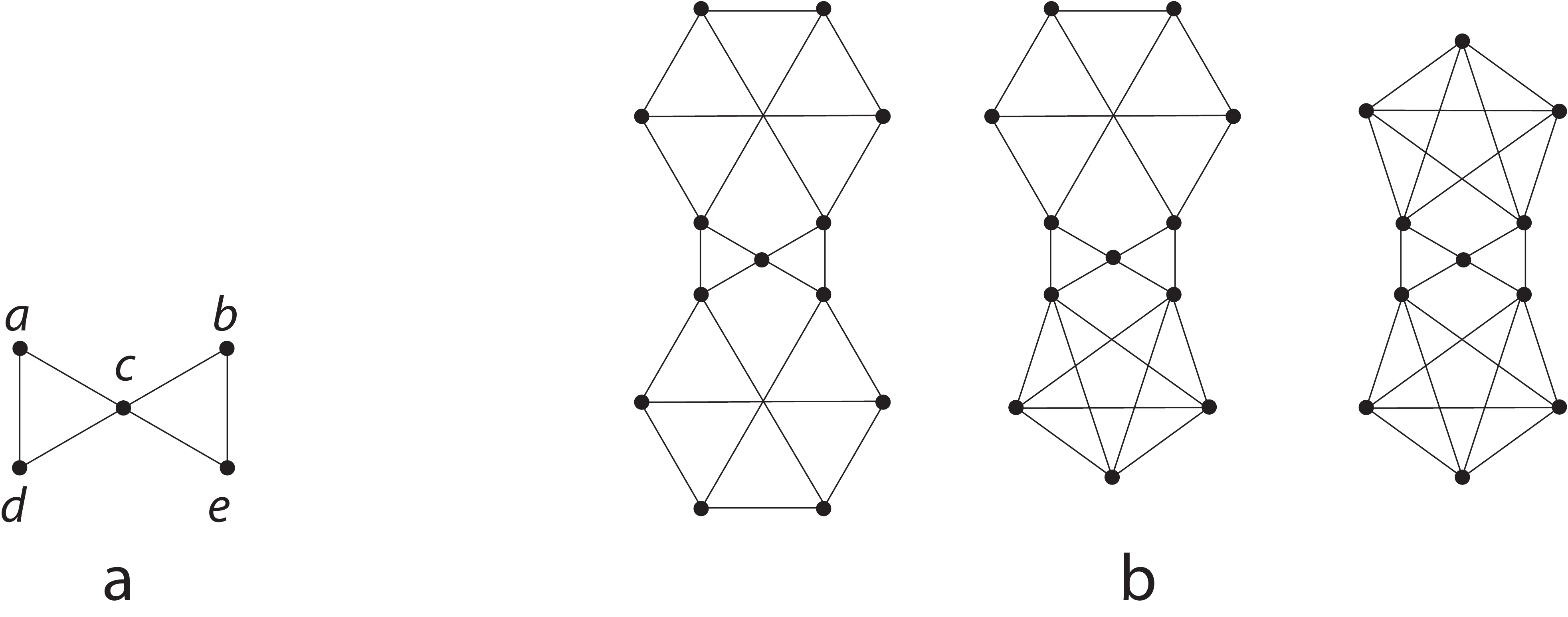}
\caption{Bowtie graphs.} \label{figbowtie}
\end{center}
\end{figure}

Henceforth, we can assume $ab \notin \E(G)$. By Theorem~\ref{thmGp1mmnpme}, this means $G'_1 \in \{K_5-e, K_{3,3}-e, K_{3,3}\}$.
We will say that $G$ is a \textbf{bowtie} if the neighborhood of $a,b$ in $G'_2$ is as shown in Figure~\ref{figbowtie}a. That is, $a$ and $b$ have degree two in $G'_2$ and $c$ has degree four. Although $d$ (respectively, $e$) has additional neighbors in $G'_2$ beside $\{a,c\}$
(resp., $\{b,c\}$), 
$de \notin \E(G'_2)$.

\begin{theorem}
\label{thmbowtie}%
If $G$ is a bowtie MMNA graph, then $G$ is one of the three graphs shown in
Figure~\ref{figbowtie}b.
\end{theorem}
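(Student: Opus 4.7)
The plan is to mirror the structure of Theorem~\ref{thmabinE9}, which classified MMNA graphs with $ab \in \E(G)$ using a secondary cut set induced by a vertex $c$. For the bowtie case the same trick should work, but now combined with Theorem~\ref{thmGp1mmnpme} since $ab \notin \E(G)$.

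First I would verify that the three graphs in Figure~\ref{figbowtie}b are MMNA: check that each single-vertex deletion is nonplanar (for NA), and that every single-edge deletion or contraction yields a planar graph after some further vertex deletion (for minor-minimality). This is routine but necessary.

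For the converse, suppose $G$ is a bowtie MMNA graph with cut set $\{a, b\}$. The bowtie definition forces $N_{G'_2}(a) = \{c, d\}$ and $N_{G'_2}(b) = \{c, e\}$, so $a$ and $b$ are non-adjacent in $G'_2$; combined with the fact that $a, b \notin \V(G_1)$, this gives $ab \notin \E(G)$. Theorem~\ref{thmGp1mmnpme} then yields $G'_1 \in \{K_5 - e, K_{3,3} - e, K_{3,3}\}$, producing three cases that I expect to correspond to the three graphs in the figure. Next, I observe that every $a$-$b$-path in $G'_2$ must pass through $c$: the only neighbors of $a$ are $c$ and $d$, the only neighbors of $b$ are $c$ and $e$, and $de \notin \E(G'_2)$, so any path from $a$ to $b$ avoiding $c$ would need an edge from $d$ to $e$ (or would have to re-enter $\{a,b\}$, contradicting simple paths). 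Following the argument in Theorem~\ref{thmabc}, it follows that $\{a, c\}$ and $\{b, c\}$ are also 2-cuts of $G$, and since $ac, bc \in \E(G)$, Theorem~\ref{thmGp1mmnp} gives $H'_1, H'_2 \in \{K_5, K_{3,3}\}$, where $H'_1, H'_2$ denote the analogs of $G'_1$ for these new cut sets.

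The final step is to enumerate the compatible triples $(G'_1, H'_1, H'_2)$ exactly as in Theorem~\ref{thmabinE9}. Many combinations are ruled out by minor-minimality: whenever a candidate admits a proper minor that still contains the K-subgraph $G'_1 + ab$ together with a nonplanar structure through $c$, one obtains a proper NA minor, contradicting MMNA. The main obstacle will be this case analysis. Specifically, the tricky part is verifying that most of the $3 \times 2 \times 2 = 12$ combinations collapse either because a $G'_1$ of type $K_{3,3}-e$ or $K_5 - e$ is incompatible with certain $H'_i$ choices (too few edges to support both cuts), or because a proper edge contraction within $G'_2$ preserves NA. Exactly three combinations should survive, matching Figure~\ref{figbowtie}b, and I would present them explicitly to conclude.
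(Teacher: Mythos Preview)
Your central claim—that every $a$-$b$-path in $G'_2$ must pass through $c$—is false, and the rest of the argument collapses with it. The bowtie definition says $d$ has additional neighbors in $G'_2$ beyond $\{a,c\}$, and likewise for $e$; the only restriction is $de \notin \E(G'_2)$. So a path $a\,d\,v_1\cdots v_k\,e\,b$ through other vertices of $G_2$ is not excluded. In fact Theorem~\ref{thmnoc} shows such a path \emph{must} exist whenever $G'_1 \in \{K_5-e, K_{3,3}-e\}$, which covers exactly the cases you are trying to analyze. Consequently $\{a,c\}$ and $\{b,c\}$ are not $2$--cuts, Theorem~\ref{thmabc} does not apply, and your $12$-case enumeration never gets off the ground.

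The paper exploits a different secondary cut: $\{d,e\}$. This \emph{is} a $2$--cut, since in $G'_2$ the vertices $a,b,c$ have neighbor sets $\{c,d\}$, $\{c,e\}$, $\{a,b,d,e\}$ respectively, so deleting $d$ and $e$ separates $G_1 \cup \{a,b,c\}$ from the rest of $G_2$. Because $de \notin \E(G)$, Theorem~\ref{thmGp1mmnpme} applies to this cut as well, giving a single companion piece $H'_1 \in \{K_5-e, K_{3,3}-e, K_{3,3}\}$. Ruling out $K_{3,3}$ for both $G'_1$ and $H'_1$ (via a disjoint-nonplanar-pieces argument) leaves $2\times 2$ choices, which by the evident $\{a,b\}\leftrightarrow\{d,e\}$ symmetry collapse to exactly three graphs—no $12$-case analysis needed.
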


\begin{proof}
It is straightforward to verify that the three graphs in the figure are MMNA.
Let $G$ be a bowtie MMNA graph. Then, referring to Figure~\ref{figbowtie}, $\{d,e\}$
is a cut set as well. Let $H'_{1}$ be `the $G'_{1}$' for the $\{d,e\}$ cut set. By Theorem~\ref{thmGp1mmnpme}, $G'_{1}$ and $H'_1$ are both drawn from $\{K_{3, 3},K_{3, 3}-e,  K_{5}-e\}$.

We will argue that neither is $K_{3,3}$.
For the sake of contradiction, assume instead
$G'_{1}=K_{3, 3}$. Notice $G'_{1}$ and $G_{2}$ are disjoint,
and nonplanar. So, $G$ has a proper NA minor,
$G'_{1} \sqcup G_{2}$, which contradicts that $G$ is to be minor minimal.

So, $G'_{1}$ and $H'_1$ are both in $ \{ K_{3, 3}-e, K_{5}-e\}$ where $ab$ is
the missing edge, $e$
and the only possibilities are the three graphs shown in Figure~\ref{figbowtie}b.
\end{proof}

Let $G$ be MMNA with cut set $\{a,b\}$ such that $ab \notin \E(G)$. We say $G$ is of {\bf $(2,2,c)$ type} if, in $G'_2$, $a$ and $b$ are of degree two and have
$c$ common neighbors. For example, a bowtie graph is $(2,2,1)$ type.

\begin{figure}[htb]
\begin{center}
\includegraphics[scale=0.3]{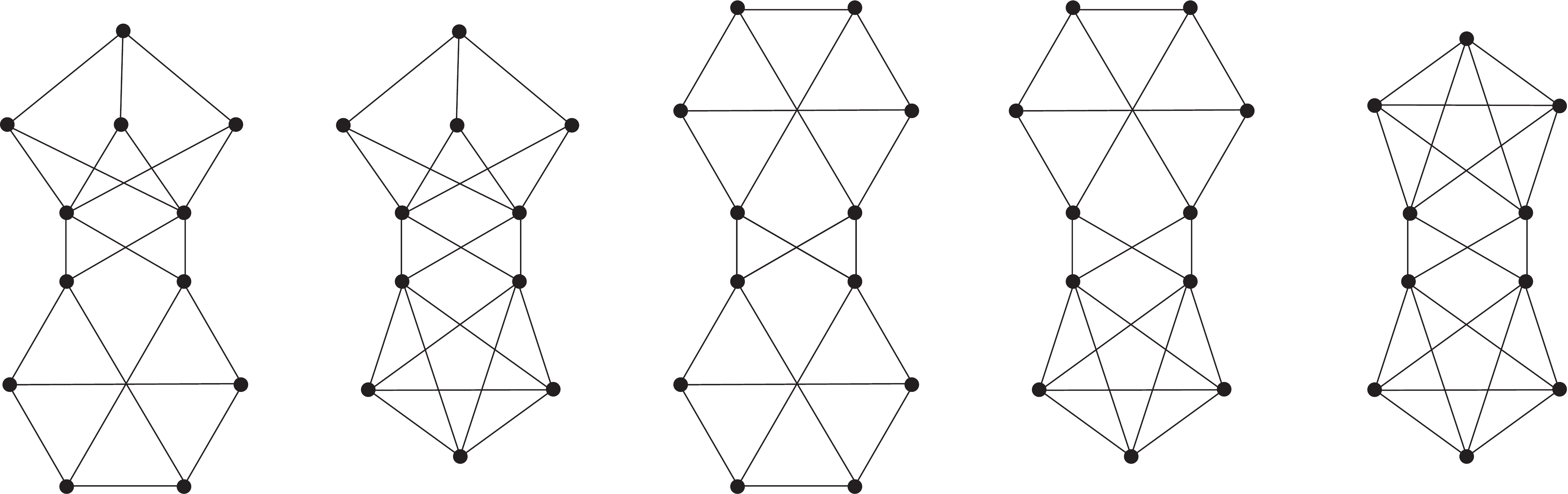}
\caption{Graphs of type $(2,2,2)$.} \label{fig222}
\end{center}
\end{figure}

\begin{theorem}
\label{thm222}%
If $G$ is MMNA and of $(2, 2, 2)$
type, then $G$ is one of the five graphs shown in Figure~\ref{fig222}.
\end{theorem}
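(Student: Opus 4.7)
I plan to mimic the bowtie argument (Theorem~\ref{thmbowtie}) by exploiting $\{c_1, c_2\}$ as a second 2-cut of $G$. First I verify that it is a 2-cut: since $a$ and $b$ meet $G_2$ only at $c_1, c_2$, removing $\{c_1, c_2\}$ separates $\V(G'_1)$ from $\V(G_2) \setminus \{c_1, c_2\}$, and the latter set is nonempty because otherwise $G_2 + a$ would be a planar 2-path, contradicting Theorem~\ref{thmgpagpb}.

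Next I apply Theorem~\ref{thmGp1mmnpme} to both cut sets. For $\{a, b\}$ this gives $G'_1 \in \{K_5 - ab,\, K_{3,3} - ab,\, K_{3,3}\}$ (with $a, b$ in a common part of the bipartition in the last case). For $\{c_1, c_2\}$ I first show that $c_1 c_2 \notin \E(G)$: if this edge were present, Theorem~\ref{thmGp1mmnp} would force $G_2$ (playing the role of ``$G'_1$'' for this cut) to be nonplanar, but then $G'_1 \sqcup G_2$, obtained by deleting the four edges $ac_1, ac_2, bc_1, bc_2$, would be a proper subgraph of $G$ with two disjoint nonplanar components, hence NA, contradicting minor minimality. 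With $c_1 c_2 \notin \E(G)$, Theorem~\ref{thmGp1mmnpme} applied to the new cut gives $G_2 \in \{K_5 - c_1c_2,\, K_{3,3} - c_1c_2,\, K_{3,3}\}$, and the same disjoint-union argument excludes $G_2 = K_{3,3}$, leaving $G_2 \in \{K_5 - c_1c_2,\, K_{3,3} - c_1c_2\}$.

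This yields $3 \times 2 = 6$ candidate pairs $(G'_1, G_2)$. The $(2,2,2)$ structure is symmetric under the involution $\{a, b\} \leftrightarrow \{c_1, c_2\}$ (both pairs are 2-cuts whose cross-neighborhoods have the same shape), and this involution identifies the pair $(K_5 - ab,\, K_{3,3} - c_1c_2)$ with $(K_{3,3} - ab,\, K_5 - c_1c_2)$; the other four pairs, including the two with $G'_1 = K_{3,3}$ which have no swap partner because $G_2 = K_{3,3}$ was excluded, are pairwise nonisomorphic (distinguished by vertex counts and by whether the $K_{3,3}$-side has the cut vertices in a common part or in different parts). This produces exactly the five graphs of Figure~\ref{fig222}, and a straightforward case check confirms each of the five is MMNA. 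The main obstacle is the subcase $G'_1 = K_{3,3}$, where this nonplanar piece is not eliminated by the disjoint-union argument, so the MMNA verification for those two graphs requires a direct minor-minimality check rather than appealing to the symmetry.
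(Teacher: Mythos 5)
Your overall strategy matches the paper's: exploit the second $2$-cut $\{c_1,c_2\}$, apply Theorem~\ref{thmGp1mmnpme} to both cuts, and enumerate the resulting pairs. But your ``disjoint-union argument'' is used twice in places where it does not apply, and this is a genuine gap. That argument requires \emph{both} pieces to be nonplanar, yet of the three candidates $K_5-e$, $K_{3,3}-e$, $K_{3,3}$ only the last is nonplanar. So in your exclusion of $c_1c_2 \in \E(G)$ you establish that $G_2$ is nonplanar but then assert that $G'_1 \sqcup G_2$ has ``two disjoint nonplanar components'' without knowing that $G'_1$ is nonplanar --- it may well be $K_5-e$ or $K_{3,3}-e$, in which case no contradiction results. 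The paper avoids this entirely: if $c_1c_2$ were an edge, then $\{c_1,c_2\}$ is a $2$-cut spanned by an edge, so Theorem~\ref{thmabinE9} forces $G$ to be one of nine explicitly listed graphs, none of which is of $(2,2,2)$ type.

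The second, more consequential, misuse is your claim that ``the same disjoint-union argument excludes $G_2 = K_{3,3}$.'' It does not: it only excludes the case where $G'_1$ and $G_2$ are \emph{both} $K_{3,3}$, which is exactly what the paper concludes (``at most one of the subgraphs can be $K_{3,3}$''). Indeed two of the five graphs of Figure~\ref{fig222} have one side equal to $K_{3,3}$. Your enumeration happens to land on the correct five unordered pairs only because you inconsistently keep $G'_1 = K_{3,3}$ as an option while banning $G_2 = K_{3,3}$; by the very $\{a,b\} \leftrightarrow \{c_1,c_2\}$ symmetry you invoke at the end, an argument excluding $G_2 = K_{3,3}$ would equally exclude $G'_1 = K_{3,3}$ and leave you with only three graphs. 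The fix is to state the exclusion correctly (not both $K_{3,3}$) and then count unordered pairs from $\{K_5-e, K_{3,3}-e, K_{3,3}\}$ omitting $\{K_{3,3},K_{3,3}\}$, which gives five.
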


\begin{proof}
It is straightforward to verify that the five graphs are MMNA.
Let $G$ be MMNA with cut set $\{a,b\}$ and of
$(2, 2, 2)$ type. Let $\{c,d\}$ be the common neighbors of $a$ and $b$ in $G'_2$.
Note that $cd \notin \E(G)$ as otherwise, $G$ must be one of the nine graphs
of Theorem~\ref{thmabinE9} and none of those are $(2,2,2)$ type.

By Theorem~\ref{thmGp1mmnpme}, and using symmetry, $G'_{1},
G_{2}\in \{ K_{3, 3},K_{3, 3}-e, K_{5}-e\}$. However, they cannot both
be $K_{3,3}$ as otherwise $G'_{1} \sqcup G_{2}$ is
a proper NA subgraph, which contradicts that $G$ is minor minimal. So
at most one of the subgraphs can be $K_{3, 3}$.
This leaves the five possibilities shown in Figure~\ref{fig222}.
\end{proof}

\begin{theorem}
\label{thmnoc}%
Suppose $G$ is MMNA and of connectivity two with
$G'_1\in \{ K_{5}-e,K_{3,3}-e\}$.
Then there is no vertex, other than $a$ and $b$, common to all
$a$-$b$-paths in $G'_2$.
\end{theorem}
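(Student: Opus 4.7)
The plan is a proof by contradiction exploiting a second $2$-cut of $G$. Suppose such a $c$ exists and let $A', B'$ be the components of $a, b$ in $G'_2 - c$.

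I would first argue that one of $\{a, c\}$ or $\{b, c\}$ is a $2$-cut of $G$. In the generic case $V(A') \supsetneq \{a\}$, the vertices of $V(A') \setminus \{a\}$ have no neighbors in $G$ outside $V(A') \cup \{c\}$, so deleting $a$ and $c$ separates them from the rest. In the degenerate case $V(A') = \{a\}$ and $V(B') = \{b\}$: one first checks $ac, bc \in E(G)$ (else $\{a\}$ or $\{b\}$ would already be a cut, contradicting $\kappa(G) = 2$), and then either $G'_2 - c$ has further components $C_i$ not meeting $\{a, b\}$, or else $G$ is exactly a subdivision of $K_5$ or $K_{3,3}$ (the missing edge $ab$ of $G'_1 + ab$ being replaced by the path $a$-$c$-$b$), which is apex and hence not NA, a contradiction. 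In the former sub-case the $C_i$'s play the role of $V(A') \setminus \{a\}$, so $\{a, c\}$ is a $2$-cut. WLOG, assume $\{a, c\}$ is a $2$-cut of $G$.

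Next I would apply Theorem~\ref{thmgpagpb} to the $2$-cut $\{a, c\}$ to identify the planar side $H_1$ and the nonplanar side $H_2$. Note that $G'_1 \cup B'$ joined at $b$ is planar (since both $G'_1$ and $B'$ are planar -- $G'_1$ by hypothesis, and $B'$ because the Kuratowski subgraph of $G - c$ must lie in a single $2$-connected block disjoint from the planar $G'_1$, placing it WLOG in $A'$). Hence $H_1$ is, up to relabeling, the component containing $V(G_1) \cup V(B')$. By Theorem~\ref{thmGp1mmnp} or Theorem~\ref{thmGp1mmnpme} (depending on whether $ac \in E(G)$), the induced subgraph $H'_1 := G[V(H_1) \cup \{a, c\}]$ lies in $\{K_5,\, K_{3,3},\, K_5 - e,\, K_{3,3} - e\}$.

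The contradiction comes from a short size-and-bipartiteness count: $V(G_1) \cup \{a, b\} \subseteq V(H'_1)$ and $c \in V(H'_1) \setminus V(G'_1)$, so $|V(H'_1)| \geq |V(G'_1)| + 1 \geq 6$. All graphs in the allowed list have at most $6$ vertices, forcing $|V(H'_1)| = 6$, whence $G'_1 = K_5 - e$ and $V(B') = \{b\}$. But then $H'_1$ contains $G'_1 = K_5 - e$ as an induced subgraph on $V(G_1) \cup \{a, b\}$, which has a triangle among the three vertices of $V(G_1)$; so $H'_1$ is not bipartite and cannot equal $K_{3,3}$ or $K_{3,3} - e$, while $|V(H'_1)| = 6 > 5$ rules out $K_5$ and $K_5 - e$. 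This contradicts the structural theorems and completes the argument. The main obstacle I anticipate is the case-work needed to establish a second $2$-cut in all sub-cases (particularly the degenerate configurations of $A'$ and $B'$); once this is done, the vertex-count and triangle argument closes the proof immediately.
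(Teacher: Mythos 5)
Your proposal is correct, and it opens exactly as the paper does: promote $c$ to a second $2$--cut and invoke the structure theorems (Theorems~\ref{thmgpagpb}, \ref{thmGp1mmnp}, \ref{thmGp1mmnpme}) for that cut. Where you diverge is in how the contradiction is extracted. The paper uses both new cuts $\{a,c\}$ and $\{b,c\}$, places $H'_1$ and $H'_2$ in $\{K_5, K_{3,3}, K_5-e, K_{3,3}-e\}$, and then asserts in one line that $G-c$ is planar, contradicting NA. You instead pin down \emph{which} side of the $\{a,c\}$--cut is the planar one --- namely the side containing $V(G_1)\cup V(B')$, since $G'_1\cup_b B'$ is a one-vertex amalgam of planar graphs --- and then observe that the resulting $H'_1$ must simultaneously contain $V(G'_1)\cup\{c\}$ (so at least six vertices) and the triangle of $K_5-e$, which no graph on the allowed list accommodates. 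This buys a more concrete and checkable contradiction than the paper's terse final sentence, at the cost of the case analysis needed to certify the second $2$--cut. Two small points to tighten: (i) your parenthetical only places \emph{one} K-subgraph of $G-c$ in $A'$; to conclude $B'$ is planar you should add that $A'$ and $B'$ cannot both be nonplanar, since they are disjoint and would give $G$ a proper disconnected NA minor (Theorem~\ref{thmMMNAdisc}); (ii) it is cleaner to make the single choice ``WLOG $A'$ is the nonplanar one,'' from which $A'\supsetneq\{a\}$ and hence the $2$--cut $\{a,c\}$ follow automatically, rather than choosing the cut first and the nonplanar side second, as the two WLOGs could otherwise be in tension.
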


\begin{proof}
Assume, for the sake of contradiction, that
$G'_1\in \{ K_{5}-e,K_{3,3}-e\}$ and there's a vertex $ c\in \V(G_{2})$
that lies on every $a$-$b$-path in  $G'_2$. Then, as in Theorem~\ref{thmabc},
$\{a,c\}$ and $\{b,c\}$ are $2$--cuts for $G$, and as in the proof of
Theorem~\ref{thmabinE9} we can define $H'_1$ as ``the $G'_1$ for the $\{a,c\}$ cut''
and similarly $H'_2$ for the $\{b,c\}$ cut and, by Theorems~\ref{thmGp1mmnp} and
\ref{thmGp1mmnpme} both $H'_1$ and $H'_2$ are drawn from $\{K_5, K_{3,3}, K_5-e, K_{3,3}-e \}$. Then $G-c$ is planar, contradicting that $G$ is NA.

Therefore, if $G$ is MMNA, of connectivity 2 with
$G'_1\in \{ K_{5}-e,K_{3,3}-e\}$,
then there is no vertex, other than $a$ and $b$, common to all
$a$-$b$-paths in $G'_2$.
\end{proof}

\begin{theorem} 
\label{thmindabP}
Let $G$ be MMNA with $\K(G) = 2$ and $ab \notin E(G)$ where 
$\{a,b\}$ is a $2$-cut. If $G_2$ is nonplanar, then there are independent $a$-$b$-paths in $G'_2$.
\end{theorem}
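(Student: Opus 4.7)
The plan is to reduce to the situation handled by Theorem~\ref{thmnoc} and then invoke Menger's theorem. Since $ab \notin E(G)$, Theorem~\ref{thmGp1mmnpme} gives $G'_1 \in \{K_5-e,\, K_{3,3}-e,\, K_{3,3}\}$, so the only case standing between me and Theorem~\ref{thmnoc} is $G'_1 = K_{3,3}$.

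To eliminate that case, I would suppose toward a contradiction that $G'_1 = K_{3,3}$ while $G_2$ is nonplanar. Form the subgraph $H$ of $G$ on vertex set $\V(G)$ with edge set $\E(G'_1) \cup \E(G_2)$; that is, delete every edge of $G$ with one endpoint in $\{a,b\}$ and the other in $\V(G_2)$. Since $\{a,b\}$ already separates $\V(G_1)$ from $\V(G_2)$, the graph $H$ splits as the disjoint union of the nonplanar graph $G'_1 = K_{3,3}$ (containing $a,b$) and the nonplanar graph $G_2$. Connectedness of $G$ forces at least one edge from $\{a,b\}$ to $\V(G_2)$, so $H$ is a proper subgraph, hence a proper minor, of $G$. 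Because deleting any single vertex of $H$ leaves one of the two nonplanar components entirely intact, $H$ is NA, contradicting that $G$ is MMNA. This argument mirrors the style of Theorems~\ref{thmMMNAdisc} and~\ref{thmG1G2p}, so no surprises should arise.

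With $G'_1 \in \{K_5-e,\, K_{3,3}-e\}$ established, Theorem~\ref{thmnoc} applies and tells us that no vertex other than $a$ and $b$ lies on every $a$-$b$-path in $G'_2$; equivalently, no single vertex of $\V(G'_2) \setminus \{a,b\}$ separates $a$ from $b$ in $G'_2$. Before applying Menger, I would briefly note that $G'_2$ is indeed connected with both $a$ and $b$ having neighbors in $G_2$: otherwise the other of $\{a,b\}$ would be a $1$-cut of $G$, contradicting $\K(G)=2$. Since $ab \notin \E(G)$ and hence $ab \notin \E(G'_2)$, Menger's theorem now delivers at least two internally vertex-disjoint $a$-$b$-paths in $G'_2$, which are the required independent paths.

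The only substantive step is the reduction ruling out $G'_1 = K_{3,3}$; everything afterward is a direct quotation of Theorem~\ref{thmnoc} plus a standard application of Menger, so I do not anticipate a serious obstacle.
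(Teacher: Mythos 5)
Your proposal is correct and follows essentially the same route as the paper: both use Theorem~\ref{thmGp1mmnpme} to restrict $G'_1$, rule out $G'_1 = K_{3,3}$ by observing that together with the nonplanar $G_2$ it would yield a proper disconnected NA minor, and then combine Theorem~\ref{thmnoc} with Menger's theorem. Your version simply spells out the disjoint-minor construction and the connectivity hypotheses for Menger in more detail than the paper does.
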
 

\begin{proof} By Theorem~\ref{thmGp1mmnpme}, $G'_1 \in \{K_5-e, K_{3,3}, K_{3,3}-e \}$. 
However, if $G'_1 = K_{3,3}$ then, together with $G_2$, this constitutes a pair of disjoint $K$-subgraphs, 
which would mean $G$ has a proper disconnected NA minor, a contradiction. So $G'_1 \in \{K_5-e, K_{3,3}-e \}$ and we can apply Menger's theorem and Theorem~\ref{thmnoc}.
\end{proof}

\begin{figure}[htb]
\begin{center}
\includegraphics[scale=0.3]{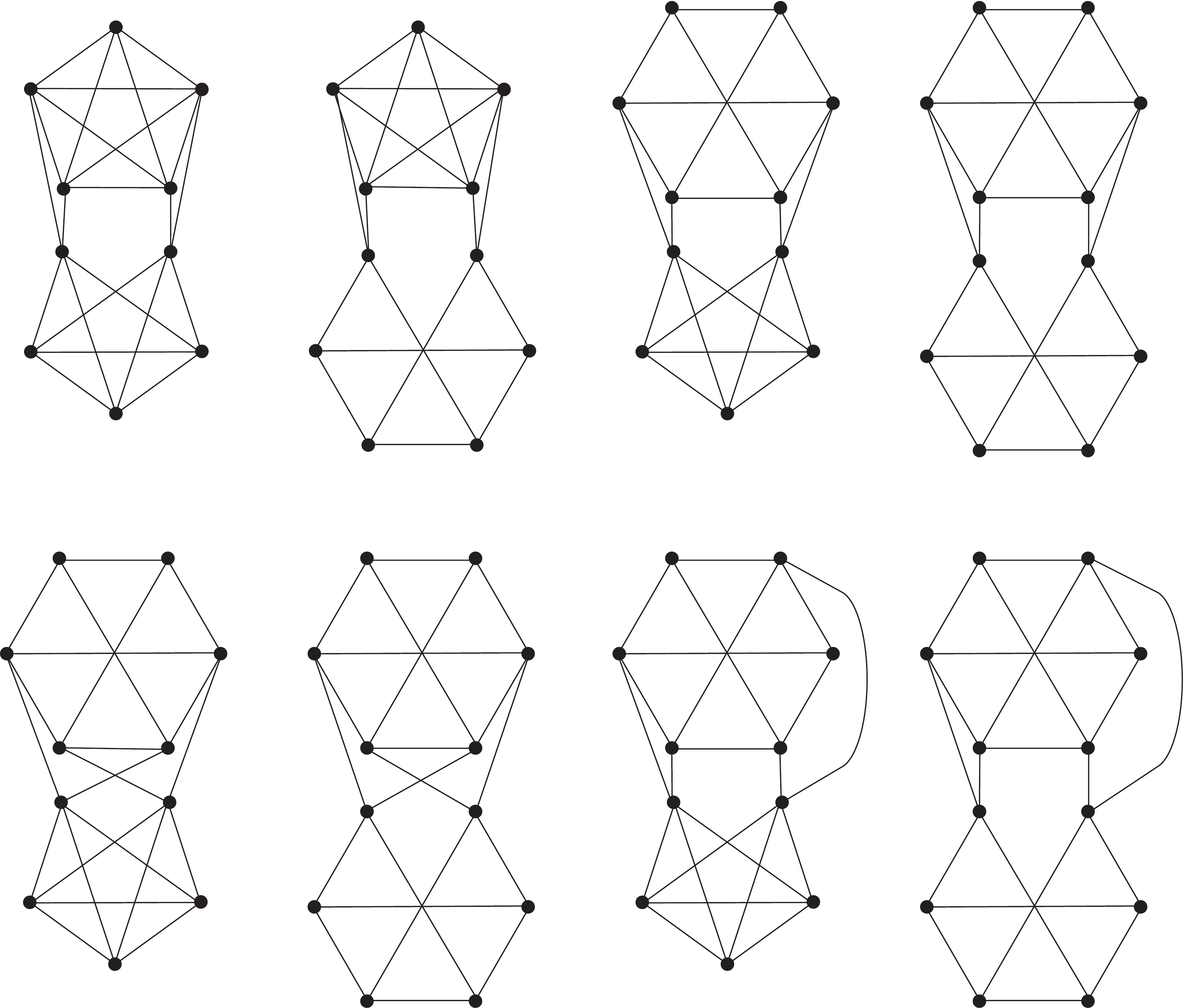}
\caption{Graphs of type $(2,2,0)$.} \label{fig220}
\end{center}
\end{figure}

\begin{theorem}
\label{thm220}%
If $G$ is MMNA of $(2,2,0)$ type and $G_2 \in \{K_5, K_{3,3} \}$, then $G$ is one of the eight graphs in Figure~\ref{fig220}.
\end{theorem}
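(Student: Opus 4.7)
The plan is to fix $G_2 \in \{K_5, K_{3,3}\}$ and decompose the structure of $G$ into two independent pieces: the subgraph $G'_1$, and the attachment of $a$ and $b$ to $G_2$ (i.e., the $2$-element sets $N(a) \cap V(G_2)$ and $N(b) \cap V(G_2)$).

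First, I determine $G'_1$. By Theorem~\ref{thmGp1mmnpme}, $G'_1 \in \{K_5-e,\, K_{3,3}-e,\, K_{3,3}\}$. I rule out $G'_1 = K_{3,3}$: otherwise $G'_1 \sqcup G_2$ is a union of two disjoint Kuratowski graphs and contains a disconnected NA subgraph by Theorem~\ref{thmMMNAdisc}, giving a proper NA minor of $G$ and contradicting MMNA. So $G'_1 \in \{K_5-e,\, K_{3,3}-e\}$. Theorem~\ref{thmmmnamindeg3} then forces $\{a,b\}$ to be the endpoints of the missing edge: otherwise some vertex of $G_1$ would have degree less than $3$ in $G'_1$, hence in $G$. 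The embedding of $\{a,b\}$ into $G'_1$ is therefore determined, yielding $2$ possibilities.

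Next, I classify attachments. By the $(2,2,0)$ hypothesis and $ab \notin E(G)$, the sets $N(a)$ and $N(b)$ in $V(G_2)$ are disjoint $2$-element subsets. I count orbits under $\mathrm{Aut}(G_2)$ together with the swap $a \leftrightarrow b$ (and any relabeling automorphism of $G'_1$). For $G_2 = K_5$, the symmetric group $S_5$ acts transitively on unordered pairs of disjoint $2$-subsets of a $5$-element set, giving one configuration. For $G_2 = K_{3,3}$ with color classes $X, Y$ of size $3$, the automorphism group $(S_3 \times S_3) \rtimes \mathbb{Z}/2$ (permuting within classes and swapping them), combined with $a \leftrightarrow b$, yields three orbits:
(i) $N(a) \subset X$ and $N(b) \subset Y$;
(ii) $N(a) \subset X$ while $N(b)$ meets both color classes;
(iii) both $N(a)$ and $N(b)$ meet each color class.
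Thus there are $1 + 3 = 4$ attachment types.

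Combining, we obtain $2 \cdot 4 = 8$ graphs, matching Figure~\ref{fig220}. The initial verification that each of these is indeed MMNA is the stated straightforward check (one exhibits an apex for each single edge deletion or contraction and confirms $G-v$ is nonplanar for every $v$). The main obstacle is the orbit analysis for $G_2 = K_{3,3}$: several superficially distinct attachments coincide under the $(S_3 \times S_3) \rtimes \mathbb{Z}/2$ action together with $a \leftrightarrow b$, and reducing the enumeration to exactly three orbits is where the bookkeeping lies.
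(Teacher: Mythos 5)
Your proposal is correct and follows essentially the same route as the paper: rule out $G'_1=K_{3,3}$ via a disconnected NA minor, so $G'_1\in\{K_5-e,K_{3,3}-e\}$ with $ab$ the missing edge, then enumerate the disjoint attachment pairs $N(a),N(b)\subset \V(G_2)$ up to symmetry (one type for $K_5$, three for $K_{3,3}$), giving $2\cdot 4=8$ graphs. The only quibble is that your minimum-degree justification for $\{a,b\}$ being the ends of the missing edge only bites for $K_{3,3}-e$ (the ends of the missing edge of $K_5-e$ still have degree three); for $K_5-e$ it is immediate from $ab\notin\E(G'_1)$, and in any case Theorem~\ref{thmGp1mmnpme} already gives $G'_1+ab$ nonplanar, which settles it.
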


\begin{proof}
Notice that the eight graphs in the figure are MMNA.
Suppose $G$ is MMNA of $(2,2,0)$ type with $G_2$ a Kuratowski graph.
By Theorem~\ref{thmGp1mmnpme},
$G'_1 \in \{K_5-e, K_{3,3}, K_{3,3}-e\}$. However,
$G'_1$ cannot be $K_{3,3}$ because then, together with $G_2$
it forms a disconnected MMNA minor of $G$. We continue by examining the ways
to construct $G'_2$.

To construct $G'_2$ we consider how to add the vertices $a$ and
$b$ to $G_2$. Let $a$ have neighbors
$v_1,v_2 \in \V(G_2)$ and let $v_3,v_4 \in \V(G_2)$
be the neighbors of $b$. Since $G$ is $(2,2,0)$
$\{ v_1, v_2 \} \cup \{v_3, v_4 \} = \emptyset$.
Up to symmetry, there is only one way to attach
$a$ and $b$ to $K_5$. This gives two of the graphs in the figure as $G'_1$ is either $K_5-e$ or $K_{3,3}-e$.

In $K_{3,3}$ the vertices are split into two parts $A$ and $B$, each of three
vertices. Then the four vertices $v_i$,
$i = 1, \ldots 4$ are either divided with two in each part, or else with three in one part and the fourth in the other. In the first case, there are two subcases: either
$\{v_1, v_2 \} \subset A$ (and $\{v_3,v_4 \} \subset B$) or else
$|\{v_1 ,v_2\} \cap A | = |\{v_1,v_2 \} \cap B| = 1$
(and similarly for $\{v_3, v_4 \}$). These three choices for $G'_2$ along with
the two choices for $G'_1$, either  $K_5 -e$ or $K_{3,3}-e$, account
for the remaining six graphs in Figure~\ref{fig220}.
\end{proof}

\begin{figure}[htb]
\begin{center}
\includegraphics[scale=0.3]{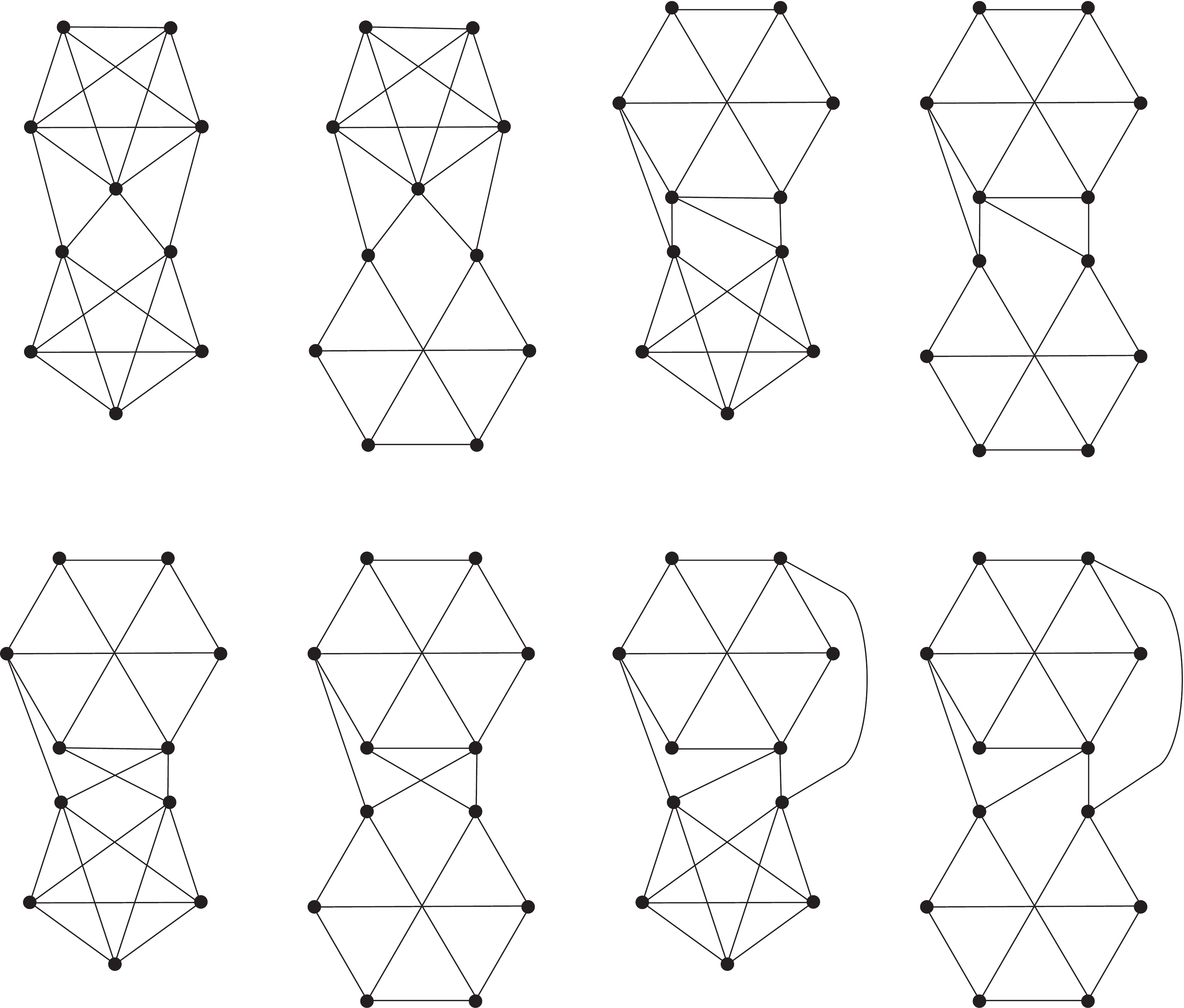}
\caption{Graphs of type $(2,2,1)$.} \label{fig221}
\end{center}
\end{figure}

\begin{theorem}
\label{thm221}%
If $G$ is MMNA of $(2,2,1)$ type and $G_2 \in  \{K_5,K_{3,3} \}$, then $G$ is
one of the eight graphs of Figure~\ref{fig221}.
\end{theorem}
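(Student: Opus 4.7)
The plan is to mirror the enumerative structure used in the proof of Theorem~\ref{thm220}, with the extra complication that $a$ and $b$ now share exactly one common neighbor $c \in \V(G_2)$. First I would check by direct inspection that each of the eight graphs drawn in Figure~\ref{fig221} is indeed MMNA: one verifies both that no vertex deletion yields a planar graph and that every proper minor is apex. This is routine, following the same pattern as the verifications in Theorems~\ref{thmabinE9},~\ref{thmbowtie},~\ref{thm222}, and~\ref{thm220}, and is the most tedious part of the argument.

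Now let $G$ be MMNA of $(2,2,1)$ type with $G_2 \in \{K_5, K_{3,3}\}$. By Theorem~\ref{thmGp1mmnpme}, $G'_1 \in \{K_5 - e,\, K_{3,3} - e,\, K_{3,3}\}$. I rule out the third option: if $G'_1 = K_{3,3}$ then, since $\V(G'_1) \cap \V(G_2) = \emptyset$ (recall $\V(G_i)$ is disjoint from $\{a,b\}$), deleting from $G$ every edge between $\V(G_2)$ and $\{a,b\}$ yields $G'_1 \sqcup G_2$ as a proper subgraph (at least one such edge exists since $a$ has a neighbor in $G_2$). But $G'_1 \sqcup G_2$ is one of the disconnected MMNA graphs of Theorem~\ref{thmMMNAdisc}, contradicting minor minimality of $G$. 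Hence $G'_1 \in \{K_5 - e,\, K_{3,3} - e\}$, giving two choices.

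The remaining task is to enumerate the possibilities for $G'_2$. The $(2,2,1)$ condition says that $a$ has neighbors $\{c, v_1\}$ and $b$ has neighbors $\{c, v_2\}$ in $G'_2$, where $c, v_1, v_2$ are three distinct vertices of $G_2$, so $G'_2$ is determined by the triple $(c,v_1,v_2)$ up to the $a \leftrightarrow b$ swap that exchanges $v_1$ and $v_2$. I count these triples under the automorphism group of $G_2$. When $G_2 = K_5$, the full symmetric group $S_5$ acts transitively on ordered triples of distinct vertices, so there is a single orbit and thus $2 \cdot 1 = 2$ graphs. When $G_2 = K_{3,3}$ with bipartition $A \sqcup B$, I may assume $c \in A$ by automorphism, and then the unordered pair $\{v_1,v_2\} \subset \V(K_{3,3}) \setminus \{c\}$ falls into exactly one of three orbits: both vertices in $A \setminus \{c\}$; one in $A \setminus \{c\}$ and one in $B$; or both in $B$. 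This yields three configurations and $2 \cdot 3 = 6$ graphs. Altogether the argument produces $2 + 6 = 8$ candidate graphs, matching Figure~\ref{fig221}; the main obstacle is keeping the orbit enumeration honest, while the MMNA verification in the first step is mechanical but lengthy.
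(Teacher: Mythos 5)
Your proposal is correct and follows essentially the same route as the paper: rule out $G'_1 = K_{3,3}$ via a disconnected MMNA minor (so $G'_1 \in \{K_5-e, K_{3,3}-e\}$), then enumerate the attachments of $a$ and $b$ to $G_2$ up to symmetry, finding one configuration for $K_5$ and three for $K_{3,3}$ — your three orbit classes (both $v_i$ in $A\setminus\{c\}$, one in each part, both in $B$) correspond exactly to the paper's case split on whether the three neighbors lie in one part and, if not, whether the lone vertex is the common neighbor. Your phrasing in terms of orbits of the automorphism group is just a more formal rendering of the paper's "up to symmetry" counting.
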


\begin{proof}
The proof is similar to that for $(2,2,0)$ type. If $G_2$ is a Kuratowski graph,
then $G'_1$ cannot be $K_{3,3}$ as that would result in a proper NA minor.
So $G'_1 \in \{K_5-e, K_{3,3} -e \}$. If $G_2 = K_5$, up to symmetry there's only
one way to form $G'_2$ and this gives two graphs in the figure as $G'_1$ is either
$K_5-e$ or $K_{3,3}-e$.

If $G_2 = K_{3,3}$, there are three ways to form $G'_2$. Together, $a$ and $b$ have three neighbors in $G_2$ which can either all lie in one part or else be split
with a single vertex in one part and the remaining two in the other. In this second
case, there are two further subcases since the vertex
that is alone in its part can either
be the common neighbor or not. Together with these three choices for $G'_2$,
there are two choices for $G'_1$, either $K_5-e$ or $K_{3,3}-e$. This gives
the remaining six graphs of Figure~\ref{fig221}
\end{proof}

We conclude this section with a classification of the MMNA graphs of connectivity two, with 2--cut $\{a,b\}$, such that $G-a,b$ has a nonplanar component. By Theorem~\ref{thmG1G2p} we must have $ab \notin \E(G)$, and by 
Theorem~\ref{thmgpagpb}, $G_1$ is planar. In other words, 
if there is a nonplanar component, it must be $G_2$. So far, we have constructed 21 graphs with nonplanar $G_2$, the three 
bowtie graphs of Theorem~\ref{thmbowtie}, two of the $(2,2,2)$ graphs (the two to the left of Figure~\ref{fig222}), and eight each of $(2,2,0)$ type (Theorem~\ref{thm220}) and $(2,2,1)$ type (Theorem~\ref{thm221}). This is in fact a complete 
listing of the graphs with $G_2$ nonplanar as we now show. 

\begin{theorem}
Let $G$ be MMNA with $\K(G) = 2$ and 2-cut $\{a,b\}$ such that $G-a,b$ has a 
nonplanar component. Then $G$ is of $(2,2,c)$ type with $c = 0$, $1$, or $2$ and
appears in one of Figures~\ref{figbowtie}b, \ref{fig222}, \ref{fig220}, or \ref{fig221}.
\end{theorem}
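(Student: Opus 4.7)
The plan is to first reduce, using the theorems established earlier in the section, to the setting where $a$ and $b$ have exactly two neighbors in $G'_2$, and then invoke the case-by-case classifications. Since $G_2$ is nonplanar, Theorem~\ref{thmG1G2p} forces $ab \notin \E(G)$, and Theorem~\ref{thmgpagpb} places us in the labeling where $G_1$ is planar while $G_2 + a$ and $G_2 + b$ are nonplanar. Applying Theorem~\ref{thmGp1mmnpme}, we have $G'_1 \in \{K_5-e, K_{3,3}-e, K_{3,3}\}$; the case $G'_1 = K_{3,3}$ is ruled out because $G'_1 \sqcup G_2$ would then be a proper subgraph of $G$ with two nonplanar components, containing one of the disconnected MMNA graphs of Theorem~\ref{thmMMNAdisc} as a minor and contradicting the minor-minimality of $G$. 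Hence $G'_1 \in \{K_5-e, K_{3,3}-e\}$ and, in particular, $G'_1 + ab$ is a Kuratowski graph.

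The main step is to establish $\deg_{G'_2}(a) = \deg_{G'_2}(b) = 2$. For the lower bound, Theorems~\ref{thmnoc} and~\ref{thmindabP} together with Menger's theorem produce at least two internally disjoint $a$-$b$-paths in $G'_2$, which immediately yields $\deg_{G'_2}(a), \deg_{G'_2}(b) \geq 2$. For the upper bound we argue by contradiction: suppose $\deg_{G'_2}(a) \geq 3$ and pick a neighbor $v_3$ of $a$ in $G'_2$ whose incident edge $av_3$ is not used by either of the two independent paths. Since $G - av_3$ is a proper minor of the MMNA graph $G$, it is apex; let $w$ be an apex vertex. A brief case check rules out $w \in V(G_1) \cup \{a,b\}$, since in each case $(G - av_3) - w$ still contains one of $G_2$, $G_2 + a$, or $G_2 + b$, all of which remain nonplanar after deleting a single edge. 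Hence $w \in V(G_2)$. Now $G - av_3 - w$ decomposes as the $2$-sum of $G'_1$ and $G'_2 - av_3 - w$ along $\{a,b\}$; since $G'_1 + ab$ is Kuratowski, any planar embedding of $G - av_3 - w$ in which the $G_2$-side still contains an $a$-$b$-path would force $a$ and $b$ onto a common face of $G'_1$'s embedding, yielding planarity of $G'_1 + ab$, a contradiction. Thus $G'_2 - av_3 - w$ contains no $a$-$b$-path, meaning $w$ alone disconnects $a$ from $b$ in $G'_2 - av_3$. But removing only the edge $av_3$ leaves both independent paths intact, so by Menger the vertex connectivity between $a$ and $b$ in $G'_2 - av_3$ is still at least two, and no single $w$ can separate them. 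This contradiction gives $\deg_{G'_2}(a) \leq 2$, and symmetrically for $b$.

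With $\deg_{G'_2}(a) = \deg_{G'_2}(b) = 2$, the graph $G$ is of $(2,2,c)$ type with $c = |N_{G'_2}(a) \cap N_{G'_2}(b)| \in \{0,1,2\}$. We split into cases. For $c = 2$, Theorem~\ref{thm222} classifies all $(2,2,2)$-type MMNA graphs, and exactly the two leftmost graphs of Figure~\ref{fig222} have nonplanar $G_2$. For $c = 0$, a further minor-replacement argument shows $G_2 \in \{K_5, K_{3,3}\}$: if $G_2$ were not already Kuratowski, replacing it by a proper Kuratowski minor $N$ would give a proper minor $G^*$ of $G$ that is still NA, using that $K_5$ and $K_{3,3}$ remain connected after any single-vertex deletion so the $2$-sum with the Kuratowski $G'_1 + ab$ produces nonplanarity after removing any vertex, contradicting the minimality of $G$. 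Theorem~\ref{thm220} then gives the eight graphs of Figure~\ref{fig220}. For $c = 1$ we split further according to whether $G$ satisfies the bowtie neighborhood condition of Figure~\ref{figbowtie}a (the common neighbor has degree four in $G'_2$): if so, Theorem~\ref{thmbowtie} gives the three graphs of Figure~\ref{figbowtie}b; otherwise the same replacement argument forces $G_2 \in \{K_5, K_{3,3}\}$ and Theorem~\ref{thm221} gives the eight graphs of Figure~\ref{fig221}. Altogether this accounts for the $2 + 8 + 3 + 8 = 21$ graphs.

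The principal obstacle is the $2$-sum planarity step in the upper bound on $\deg_{G'_2}(a)$: the $G_2$-side of the $2$-sum need not be $2$-connected, so the usual criterion ``$2$-sum planar iff both summands planar'' applies only when the $G_2$-side still admits an $a$-$b$-path; otherwise the union degenerates into two $1$-sums at $a$ and at $b$ separately and can remain planar even though $G'_1 + ab$ is nonplanar. Menger's theorem is precisely what forecloses this degenerate branch once a ``spare'' edge $av_3$ is available. A secondary subtlety is the bowtie versus non-bowtie split at $c = 1$, corresponding to whether the common neighbor can be reduced inside $G_2$ without losing the NA property.
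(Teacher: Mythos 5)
Your opening reduction ($ab \notin \E(G)$, $G_1'\in\{K_5-e,K_{3,3}-e\}$, hence $G_1'+ab$ Kuratowski) matches the paper, and your degree argument is a genuinely different and essentially correct first step: the paper never shows directly that $a$ and $b$ have degree two in $G_2'$, but your argument works --- if $av_3$ is an edge at $a$ avoiding both independent paths, then $G-av_3$ is NA (every $w\in \V(G_1)\cup\{a,b\}$ leaves the nonplanar $G_2$ intact, and every $w\in\V(G_2)$ leaves $G_1'$ plus a surviving $a$-$b$-path, i.e.\ a subdivision of $G_1'+ab$), contradicting minimality. This cleanly places $G$ in $(2,2,c)$ type and disposes of $c=2$ via Theorem~\ref{thm222}.

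The gap is in the cases $c=0$ and $c=1$. Theorems~\ref{thm220} and~\ref{thm221} carry the extra hypothesis $G_2\in\{K_5,K_{3,3}\}$, and your one-sentence justification --- ``replacing $G_2$ by a proper Kuratowski minor $N$ gives a proper NA minor'' --- does not work as stated. The neighbors of $a$ and $b$ need not lie on the K-subgraph $H_2$ of $G_2$, and contracting $G_2$ down to a Kuratowski graph can identify or absorb them. If, say, the two neighbors of $a$ are merged into a single vertex $u$ of $N$, the resulting graph $G^*$ is \emph{apex}: deleting $u$ severs every $a$-$b$-path on the $G_2$ side, so $G^*-u$ is a union of planar pieces glued at single vertices and is planar. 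So $G^*$ is not an NA minor and yields no contradiction; 3-connectivity of $N$ is irrelevant here because the issue is the attachment of $a$ and $b$, not the internal connectivity of $N$. Establishing that \emph{some} contraction scheme preserves enough attachment structure --- or, when it cannot, that the type degrades in a controlled way to $(2,2,1)$, $(2,2,2)$, or the bowtie configuration, possibly after re-choosing the paths $P_1,P_2$ --- is precisely the branch-vertex/$2$-path case analysis that occupies the bulk of the paper's proof, and it is absent from yours. The same omission affects your $c=1$ split: the claim that a non-bowtie $(2,2,1)$ graph has $G_2$ Kuratowski is asserted, not proved. Until the positions of the four (or three) attachment vertices relative to the branch vertices of $H_2$ are analyzed, the appeal to Theorems~\ref{thm220},~\ref{thm221}, and~\ref{thmbowtie} is not justified.
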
 

\begin{proof}
Assume the hypothesis. As remarked above, if $\{a,b\}$ is a 2--cut, this implies
$ab \notin \E(G)$ and $G_2$ is nonplanar.
Let $H_2$ be a K-subgraph of  $G_2$.
Since $ab \notin E(G)$, combining Theorem~\ref{thmGp1mmnpme} and 
Theorem~\ref{thmMMNAdisc}, we
have $G'_1 \in \{K_5-e, K_{3,3}-e\}$.
By Theorem~\ref{thmindabP} there are independent $a$-$b$-paths in $G'_2$, call them $P_1$ and $P_2$. Since, by Theorem~\ref{thmGp1mmnpme}, $G'_1 + ab$ is nonplanar, $P_1$ and $P_2$ each have vertices in common with $H_2$. 
(Otherwise, $G$ has disjoint nonplanar subgraphs and therefore a disconnected NA 
minor, by Theorem~\ref{thmMMNAdisc}, contradicting $G$ being minor minimal.)
By contracting edges if necessary, we have a minor of $G$ for which 
the vertices of $P_i$
are $a, a_i, \ldots b_i, b$ with $a_i,b_i \in V(H_2)$, $i = 1,2$.
Then there are several cases that correspond to $(2,2,c)$ type where $c = 0,1,2$. 

Suppose first that $a_1 = b_1$ and $a_2 = b_2$ so that $G$ is of $(2,2,2)$ type. By contracting edges in $H_2$ if needed, we recognize that $G$ has one of the five
graphs of Theorem~\ref{thm222} as a minor. 
Since $G$ is MMNA, $G$ is one of these
five graphs and since $G_2$ is nonplanar, $G$ must be one of the two graphs with
$G_2 = K_{3,3}$ (i.e., the two to the left of Figure~\ref{fig222}). In other words $G$ is of $(2,2,2)$ type and appears in one of the figures, as required.

The rest of the argument is a little technical and we introduce some 
notation to simplify the exposition. The K-subgraph $H_2$ is a subdivision of 
$K_5$ or $K_{3,3}$ and, along with vertices of degree two, 
has five or six vertices of higher degree 
that we will call \textbf{branch vertices}. Corresponding to the edges of 
$K_5$ or $K_{3,3}$, the branch vertices are connected by paths
 that we call 2-\textbf{paths} whose internal vertices are all of degree two.

To continue the argument, 
suppose next that, say, $a_1 = b_1$, but $a_2 \neq b_2$. By contracting edges in
$H_2$ if necessary, we can arrange that at least two of the three vertices $a_1$, $a_2$, and $b_2$ become branch vertices.
of the K-subgraph. If all three can be made branch vertices, then, by further edge contractions, if necessary, we see that one of the eight $(2,2,1)$ graphs 
of Theorem~\ref{thm221} is a minor of $G$. Since $G$ is MMNA, this means $G$
is one of the $(2,2,1)$ graphs with $G_2 \in \{K_5, K_{3,3}\}$ appearing in 
Figure~\ref{fig221}, as required. If not, we can assume that it is 
$a_1$ that remains as a degree two vertex of $H_2$. For, if it
is $a_2$ or $b_2$ that remains, we can contract edges to make $a_2 = b_2$ and
return to the previous case. With $a_1$ as a degree two vertex in $G_2$, 
we recognize that, perhaps by further edge contractions, $G$ has a bowtie graph as 
a minor. Since $G$ is MMNA, $G$ is a bowtie graph. That is $G$ is of $(2,2,1)$ type
and appears in Figure~\ref{figbowtie}b, as required.

Finally, suppose $a_1 \neq b_1$ and $a_2 \neq b_2$. If all four can be made 
distinct branch vertices by edge contractions in $H_2$, then $G$ has a 
$(2,2,0)$ minor, so $G$ is a $(2,2,0)$ graph with 
$G_2 \in \{K_5, K_{3,3}\}$ appearing in 
Figure~\ref{fig220}, as required. 

Next, suppose at most three can be made into branch vertices and, 
without loss of generality, suppose it's $a_1$ that remains as a degree 
two vertex in $H_2$. This means
$a_1$ lies on a $2$-path between two of $b_1$, $a_2$,
and $b_2$. If the path ends at $b_1$, by further edge contractions in $H_2$, we
can realize $a_1 = b_1$ as a branch vertex and return to an earlier case. 
So, we can assume that $a_1$ is on a $2$-path between $a_2$ and $b_2$. Use the
part of the $2$-path between $a_1$ and $b_2$ to form a new $a$-$b$-path $P'_1$
(i.e., $a'_1 = a_1$ and $b'_1 = b_2$) 
and use a path in $H_2$ 
between the branch vertices $a_2$ and $b_1$ 
that avoids the branch vertex $b_2$ to construct an independent 
$a$-$b$-path $P'_2$ (i.e., $P'_2$ has $a'_2 = a_2$ and $b'_2 = b_1$).
Now we can contract edges in $P'_1$ to identify $a'_1 = a_1$ and 
$b'_1 = b_2$ to return to the 
earlier case where $a_1 = b_1$. This completes the argument when at most three of the vertices can be moved to branch vertices.

Finally, suppose that at most two of the vertices can be made into branch vertices of
$H_2$ by contracting edges, if needed. There are two subcases. If $a_1$ and $b_1$ are the branch vertices, then $a_2$ and $b_2$ are degree two vertices
on a $2$-path between $a_1$ and $b_1$. Here
we can further contract edges in $H_2$ to identify $a_2$ and $b_2$, 
which returns us to an earlier case. In the second subcase, 
without loss of generality, it is $a_1$ and $a_2$ that are the branch
vertices of $H_2$. Assuming we can not easily contract edges to identify $a_1$ and $b_1$ or $a_2$ and $b_2$, it must be that the $2$-path from $a_1$ to $a_2$ passes first through $b_2$ and then through $b_1$. In this case, we replace $P_1$ and $P_2$ by the independent paths $P'_1$ which uses the $2$-path from $a_1$ to $b_2$ (so $a'_1 = a_1$ and $b'_1 = b_2$), and $P'_2$ which uses the $2$-path from $a_2$ to $b_1$ (then $a'_2= a_2$ and $b'_2 = b_1$). By further edge 
contractions, we return to our first case where $a_1 = b_1$ and $a_2 = b_2$. 
\end{proof}

Together, the three bowtie graphs and the eight of Figure~\ref{fig221} give
eleven MMNA graphs of $(2,2,1)$ type.
In total we have found three disconnected MMNA graphs,
nine where $ab \in \E(G)$,
as well as eight, eleven, and five, respectively when $G$ is of type $(2,2,c)$ for
$c = 0$, $1$, $2$, respectively. This gives a total of 36 MMNA graphs.

%3+9+8+11+5 = 36

\section{MMNE and MMNC Graphs}
In this section we classify MMNE and MMNC graphs of
connectivity, $\K(G)$, at most one.
For MMNE graphs we also show $\K(G) \leq 5$ and determine
the graphs with $\K(G) = 2$ and minimum degree at least three.
We conclude the section by  describing a computer
search that found 55 MMNE and 82 MMNC graphs.

We begin by observing that the MMNE and MMNC graphs are not Kuratowski sets as the opposite properties are not minor closed. Recall that NE is an abbreviation for not edge apex. The opposite property is \textbf{edge apex} meaning there's an
$e \in \E(G)$ so that $G-e$ is planar.  We call such an edge an \textbf{apex (edge)}.
Similarly, the opposite of NC is \textbf{contraction apex} meaning there's an
edge $e$ such that $G/e$ is planar. We call $e$ a \textbf{contraction apex}.

\begin{theorem}
\label{thmmmnenotclosed}%
    Deleting an edge of an edge apex graph results in an edge apex graph.
    Contracting an edge of an edge apex graph results in an edge apex graph
    unless the edge that is contracted is the \emph{only} apex edge.
\end{theorem}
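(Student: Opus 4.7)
My plan is to handle the two claims in parallel, splitting each into two cases according to whether the edge being operated on coincides with an apex edge of $G$. Let $e$ be a chosen apex edge of $G$, so that $G-e$ is planar, and let $f \in \E(G)$ denote the edge being deleted or contracted.

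For the deletion claim, the case $f \neq e$ is immediate: $e$ remains an edge of $G-f$ and $(G-f)-e = (G-e)-f$ is a subgraph of the planar graph $G-e$, so $e$ is still an apex of $G-f$. The case $f = e$ reduces to observing that $G-e$ is itself planar, so any other edge $f'$ of $G-e$ serves as an apex of $G-f$, since $(G-e)-f'$ is again a planar subgraph.

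For the contraction claim, the case $f \neq e$ uses that $e$ and $f$ share at most one endpoint (as $G$ is simple), so $e$ descends to a well-defined edge $e^*$ of $G/f$. Then $(G/f)-e^*$ is a subgraph of $(G-e)/f$, which is planar as a contraction of $G-e$; hence $G/f$ is edge apex with apex $e^*$. When $f = e$, the original apex is gone, and I further split on whether $e$ is the unique apex of $G$: if some $e' \neq e$ is also an apex, run the previous argument with $e'$ in place of $e$; if not, the statement explicitly allows $G/f$ to fail to be edge apex, which is the stated exception.

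The main point requiring care is in the contraction step: when the endpoints of $f$ share a common neighbour via an edge other than $e$, contracting $f$ may merge the image of $e$ with that of another edge into a single edge $e^*$ of $G/f$, so strictly $(G/f)-e^*$ may be a proper subgraph of $(G-e)/f$ rather than equal to it. This inclusion is still enough to transfer planarity, so the argument goes through.
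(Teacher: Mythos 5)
Your proposal is correct and follows essentially the same route as the paper: in every case the key step is that deleting the (image of the) surviving apex edge yields a subgraph or minor of the planar graph $G-e$, and the exception arises exactly when no apex edge survives the contraction. The only difference is that you are slightly more careful than the paper about the possibility that contracting $f$ merges $e$ with a parallel edge, correctly observing that the resulting containment $(G/f)-e^* \subseteq (G-e)/f$ still suffices to transfer planarity.
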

\begin{proof}
    Suppose that $G$ is edge apex,
    so it contains an edge $e$ such that $G-e$ is planar.
    Let $G'$ be the results of deleting some edge $f$ in $G$.
    If $f \neq e$, consider $G'-e$
    and note that $G'-e = G-e,f$ which is a minor of $G-e$.
    Graph $G-e$ is planar, so $G'-e$ is also planar,
    and $e$ is an apex for $G'$, which is therefore edge apex.
    Otherwise, if $f = e$,
    then $G'$ would be planar and so would also be edge apex.

    Now suppose that $G$ contains at least two edges $e_1$ and $e_2$
    ($e_1 \neq e_2$) such that both $G-e_1$ and $G-e_2$ are planar.
    Let $f$ be an arbitrary edge in $G$
    and let $G''$ be the result of contracting edge $f$ in $G$.
    Without loss of generality, suppose that $f \neq e_1$.
    Consider the graph $G''-e_1$, where if $e_1$ is incident to $f$ in $G$
    then $e_1$ is incident to the vertex formed by contracting $f$ in $G''$.
    Note that this graph $G''-e_1$ is a minor of $G-e_1$.
    But $G-e_1$ is planar, and since planarity is closed under taking minors,
    the graph $G''-e_1$ is planar.
    So edge $e_1$ is an edge-apex of $G''$.
\end{proof}

\begin{figure}[htb]
\begin{center}
\includegraphics[scale=0.3]{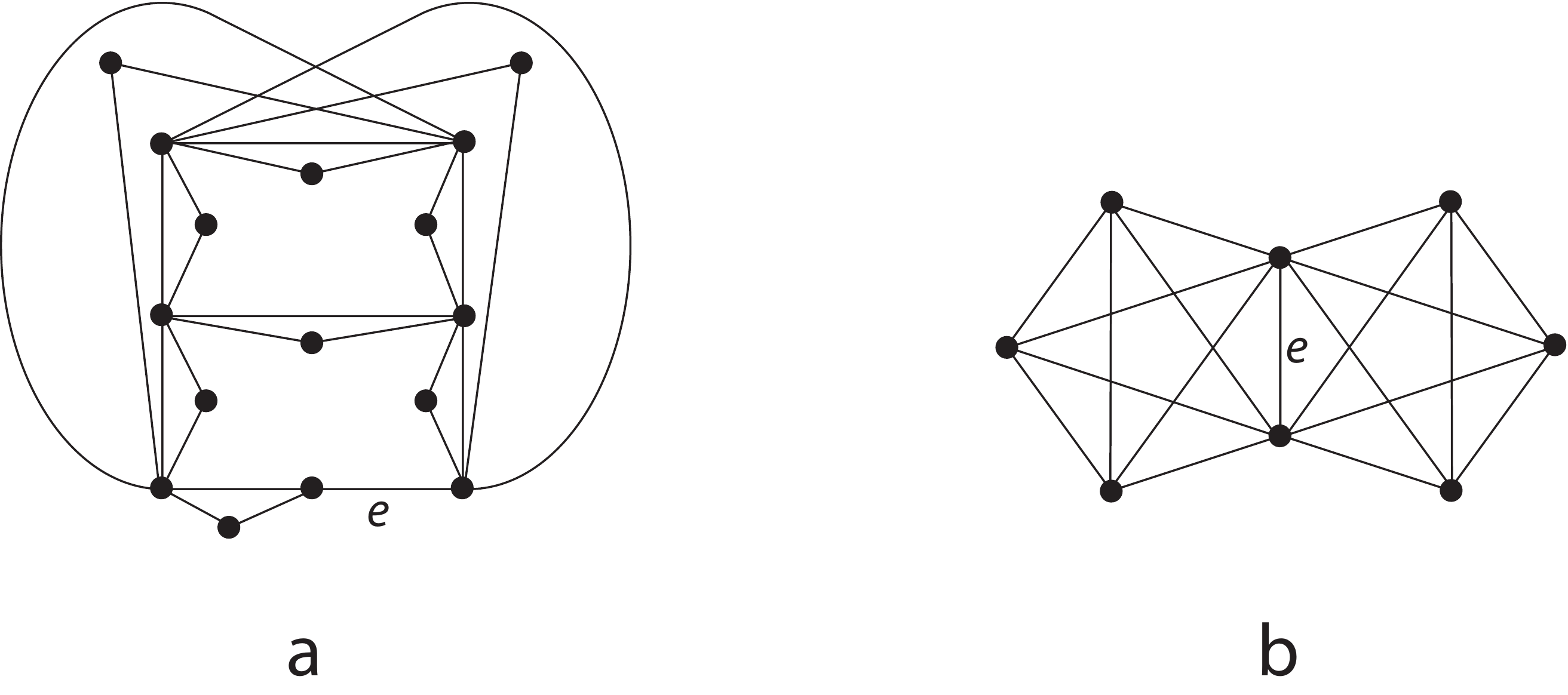}
\caption{Examples showing MMNE and MMNC are not 
%Kuratowski.
Kuratowski sets.
}\label{figNECnc}
\end{center}
\end{figure}

%\begin{theorem}
%The set of graphs that are edge apex is not closed under taking minors.
%\end{theorem}
\begin{theorem}
\label{thmmmnencex}%
The set of graphs that are edge apex is not closed under taking minors.
\end{theorem}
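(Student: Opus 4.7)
The plan is to prove this theorem by exhibiting an explicit counterexample: an edge apex graph $G$ with a proper minor that is not edge apex. By Theorem~\ref{thmmmnenotclosed}, the only minor operation that can destroy the edge-apex property is contracting a unique apex edge, so I want $G$ to have exactly one apex edge $e$, with $G/e$ being NE.

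A natural construction, of the sort suggested by Figure~\ref{figNECnc}, is to start from a small NE graph $H$ and ``uncontract'' one of its vertices: pick $v \in \V(H)$, split $v$ into two new vertices $u_1, u_2$ with some partition of its neighbors, and add the edge $e = u_1 u_2$. By construction $G/e = H$. A concrete choice that works is $H = K_{3,4}$ with the split of a vertex $a$ in the size-three part, assigning two of $a$'s four neighbors to each of $u_1$ and $u_2$.

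To finish I would verify (i) $G - e$ is planar (so $G$ is edge apex) and (ii) $K_{3,4}$ is NE. For (i), I exhibit a planar embedding of $G - e$, a bipartite graph on eight vertices and twelve edges: starting from a planar drawing of $K_{2,4}$ between the remaining size-three vertices $\{b, c\}$ and the size-four part $\{x_1, x_2, x_3, x_4\}$, the split vertices $u_1, u_2$ can be placed inside two disjoint interior $4$-faces and connected to their assigned $x_i$'s without crossings. For (ii), for any edge $yz$ of $K_{3,4}$ with $z$ in the size-four part, the size-three part together with the three size-four vertices other than $z$ form a $K_{3,3}$ subgraph of $K_{3,4} - yz$, so every edge deletion yields a nonplanar graph.

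The main technical step is the planar embedding of $G - e$; everything else is a direct structural check. As a by-product of the argument, $e$ is verified to be the unique apex edge of $G$: for every other edge $f$ one can exhibit a $K_{3,3}$ minor of $G - f$ using the branch sets $\{b\}, \{c\}, \{u_1, u_2\}$ paired against three appropriately chosen singletons from $\{x_1, \dots, x_4\}$ that avoid the endpoints of $f$.
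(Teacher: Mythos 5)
Your proof is correct and follows the same strategy as the paper's: exhibit an explicit edge apex graph $G$ with a unique apex edge $e$ such that the minor $G/e$ is NE. The paper's witness is $K_{3,3}$ with its edges expanded into triangles (so $G$ has over fifteen vertices), whereas yours is obtained by splitting a vertex of the three-part of $K_{3,4}$ into $u_1,u_2$ joined by $e$, giving an $8$-vertex, $13$-edge example; your two verifications both check out --- $K_{3,4}-yz$ always retains a spanning $K_{3,3}$ (indeed $K_{4,3}$ appears in the paper's appendix as an MMNE graph), and $G-e$ embeds in the plane by placing $u_1$ and $u_2$ in the faces $bx_1cx_2$ and $bx_3cx_4$ of $K_{2,4}$, meeting the bipartite planarity bound $|E|=2|V|-4$ with equality. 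This is a legitimate, and somewhat smaller, instance of the same idea, so nothing further is needed.
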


\begin{proof}
Let $G$ be the graph of Figure~\ref{figNECnc}a, essentially a $K_{3.3}$ with all but one edge replaced 
 a triangle, 
the final edge being subdivided into a further triangle
and edge $e$.
This graph is edge apex with $e$ as the unique apex. However, $G/e$ is $K_{3,3}$ with every edge replaced by a triangle. So, $G/e$ is not edge apex.
\end{proof}

\begin{theorem}
    Contracting an edge of a contraction apex graph
    results in a contraction apex graph.
    Deleting an edge of a contraction apex graph
    results in a contraction apex graph
    unless the edge that is deleted is the \emph{only} contraction apex
\label{thmmmncnotclosed}%
\end{theorem}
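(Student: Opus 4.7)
The plan is to mirror the proof of Theorem~\ref{thmmmnenotclosed}, interchanging the roles of edge deletion and edge contraction throughout. Let $G$ be contraction apex, witnessed by an edge $e$ with $G/e$ planar.

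For the first claim, let $f \in \E(G)$ be arbitrary and consider $G/f$. If $f = e$, then $G/f = G/e$ is planar, and (provided it has at least one edge) contracting any such edge yields a minor of the planar graph $G/e$ and hence a planar graph, so $G/f$ is contraction apex. If $f \neq e$, let $e^*$ denote the image of $e$ in $G/f$. Since the order in which two contractions are performed is immaterial (as remarked in the preliminaries), $(G/f)/e^*$ equals $(G/e)/f^*$, a minor of the planar graph $G/e$. Therefore $(G/f)/e^*$ is planar, so $e^*$ witnesses $G/f$ as contraction apex.

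For the second claim, assume $G$ has two distinct contraction apices $e_1 \neq e_2$, so that both $G/e_1$ and $G/e_2$ are planar. Let $f \in \E(G)$ and choose $e_i \in \{e_1, e_2\}$ with $e_i \neq f$. Then $e_i$ is still an edge of $G - f$, and $(G-f)/e_i$ is obtained from $G/e_i$ by deleting the image of $f$ (or is equal to $G/e_i$, if that image was absorbed during the contraction), so it is a subgraph of the planar graph $G/e_i$. Hence $(G-f)/e_i$ is planar, and $e_i$ witnesses $G - f$ as contraction apex.

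The main technical care needed in both parts is bookkeeping for the simple-graph convention: contracting an edge can identify another edge with a parallel partner, collapsing two edges into one. In the first claim this means the image $e^*$ of $e$ in $G/f$ may be an amalgamated edge; in the second claim the image of $f$ in $G/e_i$ may not exist as a distinct edge. In every case the relevant minor is well-defined and remains a minor (indeed a subgraph) of the planar graph $G/e$ or $G/e_i$, so planarity is preserved. This is the same subtlety handled in the proof of Theorem~\ref{thmmmnenotclosed}, and no new idea is required.
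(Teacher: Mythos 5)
Your proof is correct and follows essentially the same argument as the paper's: for contraction, commute the two contractions so that $(G/f)/e$ is a minor of the planar $G/e$; for deletion, pick a contraction apex $e_i \neq f$ and observe $(G-f)/e_i$ is a minor of the planar $G/e_i$. Your extra bookkeeping about amalgamated parallel edges is a reasonable refinement but does not change the approach.
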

\begin{proof}
    Suppose that $G$ is contraction apex,
    so it contains an edge $e$ such that $G/e$ is planar.
    Let $G'$ be the result of contracting some edge $f$ in $G$.
    If $f \neq e$, consider $G'/e$
    and note that $G'/e = G/e,f$ which is a minor of $G/e$.
    Graph $G/e$ is planar, so $G'/e$ is also planar,
    and $e$ is a contraction apex for $G'$, which is therefore a contraction apex graph.
    Otherwise, if $f = e$,
    then $G'$ would be planar and so would also be contraction apex.

    Now suppose that $G$ contains at least two edges $e_1$ and $e_2$
    ($e_1 \neq e_2$) such that both $G/e_1$ and $G/e_2$ are planar.
    Let $f$ be an arbitrary edge in $G$
    and let $G''$ be the result of deleting edge $f$ in $G$.
    Without loss of generality, suppose that $f \neq e_1$.
    Consider the graph $G''/e_1$ and note that it is a minor of $G/e_1$.
    But $G/e_1$ is planar, and since planarity is closed under taking minors,
    the graph $G''/e_1$ is planar.
    So edge $e_1$ is a contraction apex of $G''$.
\end{proof}

%\begin{theorem}
%The set of graphs that are contraction apex is not closed under taking minors.
%\end{theorem}
\begin{theorem}
\label{thmmmncncex}%
The set of graphs that are contraction apex is not closed under taking minors.
\end{theorem}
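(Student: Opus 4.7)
The plan is to construct an explicit counterexample. By Theorem~\ref{thmmmncnotclosed}, edge contraction preserves the contraction apex property, and edge deletion preserves it unless the deleted edge is the unique contraction apex. So the strategy is to exhibit a graph $G$ with a unique contraction apex $e$ such that $G - e$ is itself nonplanar and admits no contraction apex; then $G$ is contraction apex while its proper minor $G - e$ is not.

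A natural candidate (displayed in Figure~\ref{figNECnc}b) is the once-subdivided $K_{3,3}$, that is, the graph obtained from $K_{3,3}$ by replacing each of its nine edges by a path of length two, augmented by a single additional edge $e$ that directly joins two of the original $K_{3,3}$-vertices lying in opposite parts of the bipartition. I would verify three claims about this $G$.

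First, $G/e$ is planar. Contracting $e$ identifies the two endpoints of $e$ into a single vertex, and the subdivision vertex sitting on the corresponding $K_{3,3}$-edge becomes a pendant, since both of its neighbors have just been merged. The underlying simple graph on the original $K_{3,3}$-vertices is $K_{3,3}$ with one edge contracted, which is the wheel $W_4$ and thus planar; subdividing the remaining edges and attaching a pendant preserves planarity. Second, for every other edge $f$ of $G$ the graph $G/f$ is nonplanar: such an $f$ lies inside the once-subdivided $K_{3,3}$, and contracting it only shortens a length-two subdivision path. The result still contains a subdivision of $K_{3,3}$, and so does $G/f$. Thus $e$ is the unique contraction apex of $G$. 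Third, $G - e$ is the once-subdivided $K_{3,3}$, and by exactly the same argument contracting any of its edges yields another subdivision of $K_{3,3}$, hence a nonplanar graph; so $G-e$ has no contraction apex.

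The main obstacle is choosing the attachment of $e$ so that contracting it destroys all nonplanarity of $G$. This forces $e$ to connect two original $K_{3,3}$-vertices in opposite parts, since only such a contraction collapses the underlying $K_{3,3}$ to a planar graph. Once $e$ is placed this way, the remaining conditions are almost automatic, because every non-$e$ edge is interior to the subdivided $K_{3,3}$, whose nonplanarity is robust against any single edge contraction.
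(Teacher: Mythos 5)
Your construction is correct, and the overall strategy is the same as the paper's: exhibit a graph $G$ whose unique contraction apex is an edge $e$, then show that the proper minor $G-e$ is NC. The difference is entirely in the choice of witness. The paper takes $G$ to be two copies of $K_5$ sharing a common edge $e$ (this, not your graph, is what Figure~\ref{figNECnc}b actually depicts, so your parenthetical reference is misattributed); there $G/e = K_4 \dotcup K_4$ is planar, and $G-e$ is NC because contracting any edge in one $K_5-e$ leaves the other $K_5-e$ plus a path joining its two degree-three vertices, i.e., a $K_5$-subdivision. You instead take the once-subdivided $K_{3,3}$ plus a chord $e$ joining two branch vertices in opposite parts; your verification that $G/e$ is planar (collapse to a subdivided $W_4$ with a pendant) and that every other contraction merely un-subdivides an edge of the $K_{3,3}$-subdivision is sound. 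Each example has its merits: the paper's is smaller and its nonplanarity certificates come from the rigidity of $K_5-e$, while yours makes the NC property of the minor essentially self-evident, since contracting any edge of a once-subdivided $K_{3,3}$ visibly returns another subdivision of $K_{3,3}$. Either graph proves the theorem.
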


\begin{proof}
Define the graph $G$ as two copies of $K_5$ that share a common edge $e$
(see Figure~\ref{figNECnc}b).
We show that $G$ is contraction apex, but has a minor that is NC.
Indeed, contracting the common edge,
$G/e = K_4 \dotcup K_4$, which is planar.  Note that
this is the unique contraction apex of $G$.

Now define the subgraph $G'$ as $G-e$. Label the two subgraphs isomorphic to $K_5 - e$ as $G_1$ and $G_2$. Without loss of generality, suppose we contract an edge $f$ in $G_2$. Notice that we are left with $G_1 = K_5-e$, and a path through $G_2$ that connects the two degree three vertices of $G_1$. Thus, $G'/f$ has a subgraph homeomorphic to $K_5$ and is nonplanar. By symmetry, whatever edge $f \in \E(G')$ we choose, $G'/f$ is nonplanar. Thus $G'$ is NC.
\end{proof}

We next classify the disconnected and connectivity one MMNE and MMNC graphs, which turn out to be the same sets.

\begin{theorem}
The disconnected MMNE graphs are $K_5 \sqcup K_5$, $K_5 \sqcup K_{3,3}$, and $K_{3,3} \sqcup K_{3,3}$.
\end{theorem}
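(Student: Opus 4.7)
The approach closely parallels the analogous classifications for disconnected MMIA, MMIE, and MMNA graphs, and the argument in Theorem~\ref{thmMMNAdisc} is the template. First I would verify that each of the three listed graphs is MMNE: each is NE because deleting any edge leaves one of the two Kuratowski components intact and hence nonplanar, and every proper minor fails NE since it either eliminates one nonplanar component entirely (leaving a single Kuratowski graph, which is not NE because deleting any of its edges planarizes it) or reduces one of the Kuratowski components to a proper, hence planar, minor (after which deleting an edge of the surviving Kuratowski component yields a planar graph).

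For the converse, suppose $G = G_1 \sqcup G_2$ is disconnected and MMNE. I would proceed by cases on planarity of the components. If both $G_1$ and $G_2$ are planar, then deleting any edge leaves both components planar, so $G - e$ is planar and $G$ fails NE. If exactly one component, say $G_1$, is planar and $G_2$ is nonplanar, then NE of $G$ forces $G_2$ to be NE, since deleting any edge of $G_2$ leaves the planar component $G_1$ untouched and must still produce a nonplanar graph. But then $G_2$ itself is a proper minor of $G$ (obtained by deleting all vertices of $G_1$) which is still NE, contradicting the minor-minimality of $G$.

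The remaining case has both $G_1$ and $G_2$ nonplanar; here $G$ is automatically NE. Minor-minimality then forces each $G_i$ to be a Kuratowski graph. Indeed, if $G_1$ had any proper nonplanar minor $G_1'$, then $G_1' \sqcup G_2$ would be a proper minor of $G$ with both components nonplanar, hence NE, contradicting $G$ MMNE. So every proper minor of $G_1$ is planar, meaning $G_1$ is minor-minimal nonplanar, and by Kuratowski's theorem $G_1 \in \{K_5, K_{3,3}\}$. The same applies to $G_2$, leaving exactly the three listed graphs up to isomorphism.

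The main point requiring care is the single-nonplanar-component case, where one must observe that a planar disjoint summand cannot contribute to the NE condition, so the nonplanar component alone is an NE proper minor. Otherwise the proof is routine and amounts to transcribing the MMNA argument to the NE setting, using only the fact that $K_5 - e$ and $K_{3,3} - e$ are planar.
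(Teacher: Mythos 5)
Your proof is correct and takes essentially the same approach as the paper: rule out the cases with a planar component, then use minor-minimality to force both (necessarily nonplanar) components to be minor-minimal nonplanar, hence Kuratowski graphs. Your handling of the mixed case is marginally more direct --- you observe that the nonplanar component is itself an NE proper minor, whereas the paper deletes an edge $e_1$ from the planar component and derives a contradiction from the edge apex of $G-e_1$ --- but the two arguments are interchangeable.
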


\begin{proof}
First observe that these three graphs are MMNE.
Let $G$ be MMNE and disconnected. Suppose one of $G_1, G_2$ is planar, say
$G_1$. Then let $e_1 \in \E(G_1)$, and note that $G-e_1$ is not NE and nonplanar.
Let $e_2$ be the edge whose removal from $G-e_1$ gives a planar graph.
Since $G_1$ is planar, it must be that $e_2$ is  in $\E(G_2)$.
But, since $G_1$ is planar, this means that removing $e_2$ from $G$ gives the disconnected union of the planar $G_1$ and a planar minor of $G_2$.
So, this graph, $G-e_2$, is planar, which is a contradiction since $G$ is NE.
So it must be that $G_1$ and $G_2$ are both nonplanar.
Thus one of the graphs generated by $G_1 \sqcup G_2$ where $G_1,G_2 \in \{ K_5, K_{3,3} \}$ must be a minor of $G$.
Since $G$ is minor minimal, $G$ must be one of these three graphs.
\end{proof}

\begin{theorem}
The disconnected MMNC graphs are $K_5 \sqcup K_5$, $K_5 \sqcup K_{3,3}$, and $K_{3,3} \sqcup K_{3,3}$.

\end{theorem}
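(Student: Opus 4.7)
The plan is to follow the disconnected MMNE argument almost verbatim, substituting edge contraction for edge deletion throughout. First I would check that $K_5 \sqcup K_5$, $K_5 \sqcup K_{3,3}$, and $K_{3,3} \sqcup K_{3,3}$ are each MMNC: every edge contraction leaves the other Kuratowski component untouched and hence nonplanar, so each graph is NC; and any proper minor planarizes one component, after which contracting any edge of the other makes the whole graph planar, witnessing that the minor is not NC.

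Next I would take $G = G_1 \sqcup G_2$ to be MMNC and argue both components must be nonplanar. Both planar is immediate, as $G$ would then be planar, violating NC. To rule out exactly one being planar, assume without loss of generality $G_1$ is planar, $G_2$ is nonplanar, and in the main case $G_1$ has an edge $e_1$. Then $G/e_1 = (G_1/e_1) \sqcup G_2$ is a proper minor, so by minor minimality it is not NC, producing an edge $e_2$ of $G/e_1$ with $(G/e_1)/e_2$ planar. Since $G_2$ is untouched and nonplanar, $e_2$ cannot lie in $G_1/e_1$; hence $e_2 \in \E(G_2)$ and $G_2/e_2$ is planar. But then $G/e_2 = G_1 \sqcup (G_2/e_2)$ is a disjoint union of planar graphs, hence planar, contradicting that $G$ is NC.

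The degenerate possibilities where $G_1$ carries no edge are easier to dispatch. If $G_1 = K_1$, then $G$ and $G_2$ share an edge set, so $G_2$ itself inherits NC and is a proper minor of $G$; if instead $G_1$ is edgeless with $|\V(G_1)| \geq 2$, deleting any isolated vertex yields a proper minor with the same edges as $G$ and hence still NC. Either situation contradicts minor minimality. With both components known to be nonplanar, each contains a Kuratowski minor, so one of the three listed disjoint unions is a minor of $G$, and minor minimality forces equality. The only step requiring genuine care is the case split on whether $G_1$ carries an edge; the central contraction-swap that handles the main case is a direct adaptation of the MMNE proof, so I do not foresee any serious obstacle.
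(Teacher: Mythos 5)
Your argument is correct and is essentially the paper's own proof: the paper takes $G-e_1$ where you take $G/e_1$ as the auxiliary proper minor, but the key step --- locating the contraction apex $e_2$ of that minor inside the nonplanar component $G_2$ and then observing that $G/e_2 = G_1 \sqcup (G_2/e_2)$ is planar, contradicting NC --- is identical, as is the concluding appeal to Kuratowski minors and minimality. Your explicit handling of an edgeless planar component is a small point the paper glosses over, but it changes nothing substantive.
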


\begin{proof}
First observe that these three graphs are MMNC.
Let $G$ be MMNC and disconnected. Suppose one of $G_1, G_2$ is planar, say $G_1$.
Then let $e_1 \in \E(G_1)$, and note that $G-e_1$ is not NC and nonplanar.
Then there is an edge
$e_2 \in \E(G-e_1)$ such that $(G-e_1)/e_2$ is planar.
Since $G_1$ is planar, it must be that $e_2$ is  in $\E(G_2)$.
But, since $G_1$ is planar, this means that contracting $e_2$ in $G$ gives the disconnected union of the planar $G_1$ and a planar minor of $G_2$.
This graph $G/e_2$ is planar, which is a contradiction since $G$ is NC.
So it must be that $G_1$ and $G_2$ are both nonplanar. Then one of the graphs
$G= G_1 \sqcup G_2$  with $G_i \in \{ K_5, K_{3,3} \}$ is a minor of $G$.
Since $G$ is minor minimal, it is one of
those three graphs.
\end{proof}

\begin{cor}
Let $G$ be disconnected. The following are equivalent: $G$ is MMNA; $G$ is MMNE; $G$ is MMNC.
\end{cor}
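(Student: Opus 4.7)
The proof plan is essentially an observation: all three sets have already been computed independently and turn out to coincide. Theorem~\ref{thmMMNAdisc} establishes that the disconnected MMNA graphs are exactly $\{K_5 \sqcup K_5,\ K_5 \sqcup K_{3,3},\ K_{3,3} \sqcup K_{3,3}\}$, and the two theorems immediately preceding this corollary establish that the disconnected MMNE graphs and the disconnected MMNC graphs are likewise exactly this same three-element set. So I would simply invoke these three characterizations in turn and conclude that, restricted to disconnected graphs, the properties MMNA, MMNE, and MMNC cut out identical families of graphs.

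Concretely, the one-line argument is: if $G$ is disconnected, then by the three cited theorems,
$G$ is MMNA $\iff G \in \{K_5 \sqcup K_5,\ K_5 \sqcup K_{3,3},\ K_{3,3} \sqcup K_{3,3}\} \iff G$ is MMNE, and similarly this set equals the set of disconnected MMNC graphs, so $G$ is MMNE $\iff G$ is MMNC. Chaining the biconditionals gives the three-way equivalence.

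There is no real obstacle to overcome: the hard work was already done in the three preceding theorems, each of which did a case analysis on whether the components were planar or nonplanar and concluded (in every case) that the only minor minimal possibilities are the three disjoint unions of Kuratowski graphs. Since the same three graphs appear on all three lists, the corollary is immediate and no further analysis is needed. I would write the proof as essentially a single sentence citing those three theorems.
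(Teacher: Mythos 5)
Your proposal is correct and matches the paper's (implicit) argument exactly: the corollary follows immediately from Theorem~\ref{thmMMNAdisc} and the two preceding theorems, since all three classifications yield the same three graphs $K_5 \sqcup K_5$, $K_5 \sqcup K_{3,3}$, and $K_{3,3} \sqcup K_{3,3}$. No further work is needed.
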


Recall that $G_1 \dotcup G_2$ is the union of $G_1$ and $G_2$ with one vertex identified.

\begin{theorem}
	If $G$ is MMNE and $\K(G)=1$ then $G = G_1 \dotcup G_2$ where $G_1, G_2 \in \{ K_5, K_{3,3} \}$, and they share exactly one vertex.
\end{theorem}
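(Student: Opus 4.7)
The plan is to let $v$ be the cut vertex and write $G-v = G_1 \sqcup G_2$, with $G'_i$ denoting the induced subgraph on $\V(G_i) \cup \{v\}$, so that $G = G'_1 \dotcup G'_2$. The single observation driving everything is: \emph{if $H$ is built from two nonplanar pieces sharing at most one vertex, then $H$ is NE}. Indeed, any edge of $H$ lies in exactly one of the two pieces, and deleting it leaves the other piece intact and still nonplanar. I will use this repeatedly to manufacture proper NE minors of $G$ whenever $G'_1$ or $G'_2$ admits a nonplanar simplification, and derive a contradiction from $G$ being MMNE.

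First I will show that both $G'_1$ and $G'_2$ are nonplanar. Suppose for contradiction $G'_2$ is planar. For any edge $e \in \E(G'_1)$ we have $G - e = (G'_1 - e) \dotcup G'_2$; gluing two planar graphs along a single vertex is planar, so $G - e$ is nonplanar iff $G'_1 - e$ is. Since $G$ is NE, this forces $G'_1$ to be NE as a graph in its own right. But $G_2$ is nonempty, so $G'_1$ is a proper subgraph (hence proper minor) of $G$, contradicting MMNE.

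Next I will pin down each $G'_i$ as exactly $K_5$ or $K_{3,3}$; I argue for $i=1$. I first claim that $G_1$ is planar: otherwise, deleting the (at least one) edge from $v$ into $G_1$ exhibits a proper subgraph $G_1 \sqcup G'_2$ whose two components are both nonplanar, hence NE by the key observation, contradicting MMNE. So every K-subgraph of the nonplanar graph $G'_1$ must contain $v$; fix one such, $H$. If $G'_1$ has any vertex or edge beyond $H$, deleting it from $G$ yields a proper minor of the form $H'_1 \dotcup G'_2$ in which $H'_1$ still contains $H$, so both pieces are nonplanar and the minor is NE, contradicting MMNE. Thus $G'_1 = H$, a subdivision of $K_5$ or $K_{3,3}$. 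If this subdivision were proper, contracting an edge at any degree-two vertex of $G'_1$ yields a proper minor whose $G'_1$-side is still a subdivision of the same Kuratowski graph glued to the unchanged, nonplanar $G'_2$ at one vertex --- again NE, again contradicting MMNE. Hence $G'_1 \in \{K_5, K_{3,3}\}$, and symmetrically for $G'_2$, with the two sharing the single vertex $v$.

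The main obstacle is the bookkeeping in the third paragraph: each time something is stripped from $G'_1$, I must verify that the resulting proper minor still decomposes into two nonplanar pieces sharing at most one vertex, since that shape is precisely what activates the key observation and produces the NE contradiction. Once one confirms that the relevant edge deletions, vertex deletions, and edge contractions all preserve a nonplanar $G'_2$-side together with a nonplanar $G'_1$-side (possibly re-rooted at the merged vertex after a contraction incident to $v$), minor minimality forces $G'_i \in \{K_5, K_{3,3}\}$.
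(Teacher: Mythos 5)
Your proof is correct, and it follows the paper's overall skeleton --- first show both sides of the cut vertex are nonplanar, then force each side down to a Kuratowski graph by minor-minimality --- but both halves are executed differently. For the first half, the paper deletes an edge $e$ of the supposedly planar side, takes the edge apex $f$ of the proper minor $G-e$, argues $f$ must lie in the other side, and concludes $G-f$ is planar; you instead observe directly that if one side is planar then the other side $G'_1$ is NE in its own right and is a proper minor of $G$, which is shorter and avoids the detour through $f$. For the second half, the paper simply asserts that $G$ now has one of the three one-point unions of Kuratowski graphs as a minor and appeals to minimality; your version is more explicit and fills in what the paper elides: you first rule out a nonplanar component of $G-v$ (so every K-subgraph of $G'_i$ must pass through $v$), then strip away extra vertices, extra edges, and subdivision vertices, each time exhibiting a proper NE minor of the form ``nonplanar $\dotcup$ nonplanar.'' Both arguments ultimately rest on the same key observation, that a union of two nonplanar graphs sharing at most one vertex is NE. Two small points of polish: the phrase ``deleting the (at least one) edge from $v$ into $G_1$'' should read ``deleting all edges from $v$ into $G_1$'' in order to actually produce the subgraph $G_1 \sqcup G'_2$; and if the theorem is meant as a complete classification (as the paper treats it) you should also record, as the paper does in its first sentence, that the three graphs $G_1 \dotcup G_2$ with $G_i \in \{K_5, K_{3,3}\}$ really are MMNE --- though as literally stated the theorem is only an implication, so your argument suffices for that.
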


\begin{proof}
First observe that these three graphs are MMNE.
Let $G= G_1 \dotcup G_2$ and suppose for the sake of contradiction that one of $G_1$ and $G_2$, say $G_1$, is planar.
Let $e$ be an edge of $G_1$. Then $G-e$ is not NE and nonplanar.
Let $f$ be the edge apex of $G-e$. Since $G_1$ is planar, $f$ must lie in $\E(G_2)$.
Since $G_2-f$ is a subgraph of the planar $G-e,f$, it must itself be planar.
Note that $G-f = G_1 \cup (G_2-f)$ is the union of two planar graphs that share at most one vertex, which is clearly planar.
This is a contradiction, since $G$ is NE. So both $G_1$ and $G_2$ are nonplanar.
So $G$ has one of the graphs $G_1 \dot{\cup} G_2,\,\, G_1, G_2 \in \{ K_5, K_{3,3} \}$ as a minor.
Since these graphs are NE and $G$ is minor minimal, $G$ must be one of these three graphs.
\end{proof}

\begin{theorem}
	If $G$ is MMNC and $\K(G)=1$ then $G = G_1 \dotcup G_2$ where $G_1, G_2 \in \{ K_5, K_{3,3} \}$, and they share exactly one vertex.
\end{theorem}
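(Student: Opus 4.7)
The plan is to mirror the preceding MMNE proof, replacing edge deletion by edge contraction where appropriate. First I would verify that the three candidate graphs $K_5 \dotcup K_5$, $K_5 \dotcup K_{3,3}$, and $K_{3,3} \dotcup K_{3,3}$ are each MMNC: each is NC because contracting any edge lies inside one of the two Kuratowski pieces and leaves the other intact at the shared vertex, and minor minimality follows because any proper minor collapses at least one component to a planar graph, after which an edge contraction in the reduced piece gives a planar result.

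Next I would take an arbitrary MMNC graph $G$ with $\K(G)=1$, write $G = G_1 \dotcup G_2$ at the cut vertex $v$, and rule out the possibility that either component is planar. If both $G_1$ and $G_2$ are planar, then $G$ itself is planar (a union of two planar graphs at a single vertex), contradicting NC. If $G_1$ is planar and $G_2$ nonplanar, then $G_2$, obtained from $G$ by deleting $\V(G_1)\setminus\{v\}$, is a proper nonplanar minor of $G$, hence by minor minimality is not NC, so some $f\in\E(G_2)$ has $G_2/f$ planar. But then $G/f = G_1 \dotcup (G_2/f)$ is a union of two planar graphs at a single vertex and therefore planar, contradicting that $G$ is NC.

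With both $G_1$ and $G_2$ nonplanar, the final step would show each is minor minimal nonplanar. The key observation is that if $H_1$ and $H_2$ are nonplanar graphs meeting at a single vertex, then $H_1 \dotcup H_2$ is NC: every edge lies in exactly one piece, and contracting it leaves the other nonplanar piece intact. Hence if $G_1$ admitted a proper nonplanar minor $G_1'$, then $G_1' \dotcup G_2$ would be a proper NC minor of $G$, violating MMNC. Therefore $G_1$ is MMNP and so lies in $\{K_5, K_{3,3}\}$, and by symmetry so does $G_2$.

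The only real bookkeeping issue will be contracting an edge incident to the cut vertex $v$: such a contraction merges $v$ with an adjacent vertex, but the resulting graph still has the one-vertex-union form (the merged vertex becomes the new identification point), so every step above passes through without separate case analysis. I do not anticipate any serious obstacle beyond adapting the MMNE argument to contractions.
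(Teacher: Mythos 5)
Your proof is correct and follows essentially the same route as the paper's: rule out a planar component by locating a contraction apex $f$ in the nonplanar component (the paper reaches this via the proper minor $G-e$ with $e\in \E(G_1)$ rather than via $G_2$ itself, but the idea is identical) and conclude that $G/f$ is a one-point union of planar graphs, then use minor minimality to force each component into $\{K_5,K_{3,3}\}$. One small slip in your verification that the three candidates are MMNC: after a proper minor collapses one piece to a planar graph, the witness contraction showing the minor is not NC must be taken in the piece that is \emph{still nonplanar} (e.g.\ contracting an edge of the intact $K_5$ yields $K_4$), not in the already-reduced piece, where a contraction leaves the other nonplanar piece untouched.
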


\begin{proof}
First observe that these three graphs are MMNC.
Let $G = G_1 \dotcup G_2$ and suppose for the sake of contradiction that one of $G_1$ and $G_2$, say $G_1$, is planar.
Let $e$ be an edge of $G_1$. Then $G-e$ is not NC and nonplanar.
Let $f \in \E(G-e)$ be the contraction apex of $G-e$, that is, $(G-e)/f$ is planar. Since $G_1$ is planar, $f$ must lie in $G_2$.
Since $G_2/f$ is a subgraph of the planar $(G-e)/f$, it must itself be planar.
Note that $G/f = G_1 \cup (G_2/f)$  is the union of two planar graphs that share at most one vertex, which is clearly planar.
This is a contradiction, since $G$ is NC.

Thus, both $G_1$ and $G_2$ are nonplanar.
So $G$ has one of the graphs $G_1 \dot{\cup} G_2$ with $G_1, G_2 \in \{ K_5, K_{3,3} \}$ as a minor.
Since these graphs are NC and $G$ is minor minimal, $G$ must be one of these three graphs.
\end{proof}

\begin{cor} Let $G$ have connectivity one. Then $G$ is MMNE if and only if it is MMNC.
\end{cor}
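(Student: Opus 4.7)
The plan is to note that this corollary is immediate from the two preceding theorems. Each of those theorems gives exactly the same characterization of the connectivity-one minor-minimal graphs in its respective family: both MMNE and MMNC graphs of connectivity one are precisely the graphs of the form $G_1 \dotcup G_2$ with $G_1, G_2 \in \{K_5, K_{3,3}\}$. So my proof would simply invoke both theorems and observe that the three explicit graphs $K_5 \dotcup K_5$, $K_5 \dotcup K_{3,3}$, and $K_{3,3} \dotcup K_{3,3}$ lie in both lists.

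More explicitly, suppose $G$ has $\K(G) = 1$ and is MMNE. By the first of the two preceding theorems (the characterization of MMNE graphs of connectivity one), $G$ is one of the three one-vertex-union graphs above. Since each of these three graphs is (by the second theorem) MMNC, we conclude $G$ is MMNC. The converse is symmetric: if $G$ has $\K(G) = 1$ and is MMNC, then by the second theorem $G$ is again one of the same three graphs, each of which is MMNE by the first theorem. Hence MMNE and MMNC coincide on connectivity-one graphs.

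There is no real obstacle here — the content is carried entirely by the two preceding theorems, which already do the work of identifying the connectivity-one members of each family. The only thing to check is that the three graphs in question truly appear in both lists, and that the two theorems really have the same conclusion in the connectivity-one case (they do, verbatim). So the proof is essentially a one-line citation of the two prior results.
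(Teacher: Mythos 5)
Your proof is correct and matches the paper's (implicit) argument exactly: the corollary follows directly from the two preceding theorems, which identify the same three graphs $G_1 \dotcup G_2$ with $G_1, G_2 \in \{K_5, K_{3,3}\}$ as the connectivity-one members of both families, each theorem's proof also verifying that these three graphs have the respective property.
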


Recall that there are no MMNA graphs of connectivity one. In particular, for each of
$K_5 \dotcup K_5$, $K_5 \dotcup K_{3,3}$, and $K_{3,3} \dotcup K_{3,3}$, the cut vertex is an apex.
We next classify the MMNE graphs of connectivity two under the assumption
that the minimum degree, $\delta(G)$, is at least three.
We will argue that there are exactly six such graphs and we begin with the observation that those graphs are indeed MMNE.
As discussed at the end of this section, based on a computer search,
these again coincide with the MMNC examples of connectivity two with $\delta(G) \geq 3$. In addition to being both MMNE and MMNC, these 12 graphs with $\K(G) \leq 2$ are exactly the 
obstructions,
of connectivity at most two, to embedding a graph in the projective plane, see \cite[Section 6.5]{MT}.

\begin{figure}[htb]
\begin{center}
\includegraphics[scale=0.3]{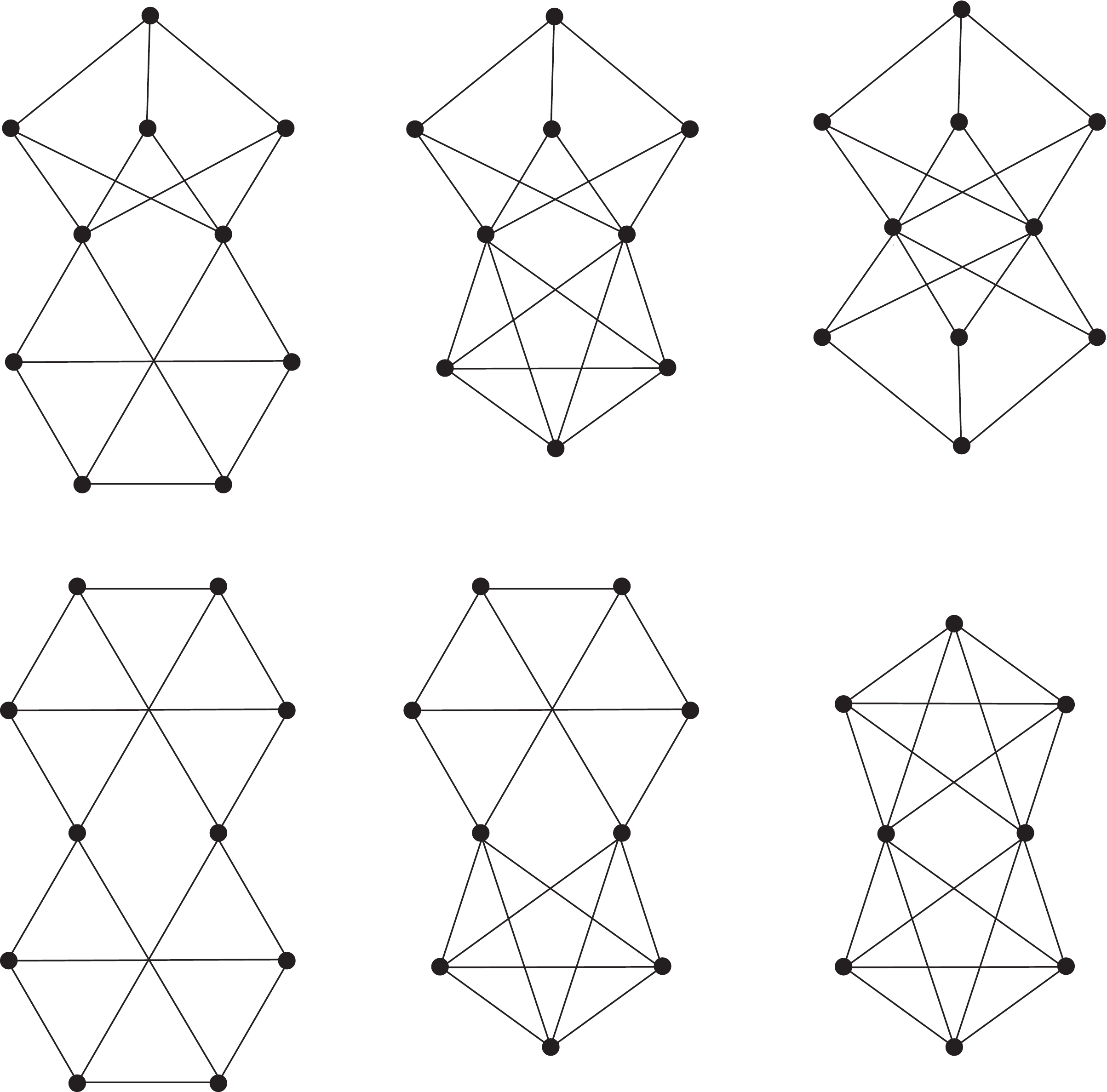}
\caption{The six MMNE graphs of connectivity two with $\delta(G) \geq 3$.}\label{figMMNEK2}
\end{center}
\end{figure}

\begin{theorem}
\label{thmMMNEK21}%
The six graphs of Figure~\ref{figMMNEK2} are MMNE.
\end{theorem}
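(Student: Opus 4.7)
The plan is to verify, for each of the six graphs $G$ shown in Figure~\ref{figMMNEK2}, two properties: first, that $G$ is NE, i.e., $G-e$ is nonplanar for every $e \in \E(G)$; and second, that $G$ is minor minimal with respect to this property, i.e., every proper minor of $G$ is edge apex or planar.

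For the NE property, I would exploit the automorphism group of each of the six graphs to reduce the task to checking one representative edge per orbit. Each graph has connectivity two, so by the structural results of Section~4 it decomposes across a $2$-cut $\{a,b\}$ into subgraphs $G'_1$ and $G'_2$ of known types (involving $K_5$, $K_{3,3}$, $K_5-e$, $K_{3,3}-e$, together with independent $a$-$b$-paths). For each orbit representative $e$, I would exhibit an explicit K-subgraph in $G-e$: when $e$ lies internal to one side, the Kuratowski structure on the other side combined with a transverse $a$-$b$-path furnishes nonplanarity; when $e$ crosses the cut or lies on a critical $a$-$b$-path, independent parallel $a$-$b$-paths still supply the needed connections through a suitably chosen Kuratowski structure.

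For minor minimality, by transitivity of the minor relation it suffices to show that each single-step minor $G-e$ or $G/e$ fails to be NE (vertex deletion is covered by iterated edge deletion, given $\delta(G)\ge 3$). Equivalently, for each such minor $H$ I must produce an apex edge $f \in \E(H)$ with $H-f$ planar. For edge deletions this amounts to exhibiting a planar embedding of $G-e,f$, and for edge contractions a planar embedding of $(G/e)-f$. Again, the orbit analysis keeps the number of checks small, and each planar embedding can be displayed directly.

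The main obstacle is sheer bookkeeping rather than any conceptual difficulty: six graphs, each with several edge orbits, and each orbit requiring either a nonplanarity certificate (an explicit $K_5$ or $K_{3,3}$ minor) or a planarity certificate (an explicit planar embedding of the relevant minor). No ingredient beyond Kuratowski's theorem, the connectivity-two structure from Section~4, and the automorphisms of the six graphs is required, so symmetry reductions and standard planarity facts for small graphs render the case analysis entirely routine.
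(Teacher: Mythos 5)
Your verification of the NE property (exhibiting a Kuratowski subgraph in each $G-e$, with symmetry reducing the number of cases) matches the paper's first step, which it dismisses as "easy to verify." The problem is in your minor-minimality argument. You claim that "by transitivity of the minor relation it suffices to show that each single-step minor $G-e$ or $G/e$ fails to be NE." That reduction is invalid here: a proper minor of $G$ is a minor of some single-step minor $H$, but knowing $H$ is not NE tells you nothing about the deeper minors of $H$, because the property \emph{not} NE (edge apex or planar) is not closed under taking minors. This is exactly the content of Theorem~\ref{thmmmnencex} in this paper, which exhibits an edge apex graph with an NE minor. So checking only the first-generation minors leaves open the possibility that some deeper minor of one of the six graphs is NE, which would destroy minor minimality. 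Transitivity of the minor relation gives you nothing without a monotonicity statement to propagate along it.

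The paper closes this gap with Theorem~\ref{thmmmnenotclosed}: deleting an edge of an edge apex graph always yields an edge apex graph, and contracting an edge does too \emph{unless} the contracted edge is the unique apex edge. With that in hand, the check of single-step minors does propagate down the minor order, except along chains that contract a unique apex edge; the paper handles those exceptional branches by observing that one quickly reaches planar graphs (and planarity \emph{is} minor closed, and planar graphs are not NE). Your proposal would be repaired by adding this lemma and the accompanying case analysis for the unique-apex-edge exception; without it, the argument does not establish minor minimality.
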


Note that these graphs are of the form $G_1 \ddotcup G_2$ with $G_i \in
\{K_5-e, K_{3,3}, K_{3,3}-e \}$.

\begin{proof}
Let $G$ be one of the six graphs and $e$ denote an arbitrary edge of $G$.
It's easy to verify that each $G-e$ is nonplanar, so $G$ is NE.
We must also show that no minor of $G$ is NE.
We first observe that for each choice of $e$, there is another edge $f$ such that $G-e,f$ is planar. That is, $G-e$ is not NE. Also, there's an edge $g$ such that
$(G/e) - g$ is planar, which shows $G/e$ is not NE.

By Theorem~\ref{thmmmnenotclosed}, deleting or contracting further edges continues to give minors of $G$ that are not NE, so long as we do not contract
the unique apex edge in a graph.
Working around this obstacle is not difficult
as we very quickly come to planar minors.
Planarity {\em is} closed under taking minors and a planar graph is
not NE.
\end{proof}

A key step in the classification is the observation that $ab$ is not an edge of $G$.

\begin{lemma}
\label{lemAB_NOT_EDGE}
If $G$ is MMNE, $\K(G) = 2$ with cut set $\{a, b\}$, and $\delta(G) \geq 3$,
  then $ab$ is not an edge in $G$.
\end{lemma}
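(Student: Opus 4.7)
The plan is to argue by contradiction: suppose $ab \in E(G)$. Write $G-a,b = G_1 \sqcup G_2$ and let $G'_i$ denote the induced subgraph on $V(G_i) \cup \{a,b\}$, so that $G$ is obtained by identifying $G'_1$ and $G'_2$ along the common edge $ab$. A standard consequence of Kuratowski's theorem for a graph $H$ with a $2$-cut $\{a,b\}$ is that $H$ is planar if and only if both ``sides'' of the cut with the edge $ab$ added are planar. Applied to $G$, where $ab$ is already present, this gives that $G$ is planar iff both $G'_1$ and $G'_2$ are planar. Since $G$ is NE and hence nonplanar, at least one $G'_i$ is nonplanar, and I split into two cases.

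\textbf{Case 1:} both $G'_1$ and $G'_2$ are nonplanar. I would consider $G - ab$, a proper minor of $G$, and show it is itself NE, contradicting $G$ MMNE. Any $f \in E(G - ab)$ lies in exactly one $G'_i$, say $G'_1$; then $\{a,b\}$ remains a $2$-cut of $(G-ab)-f$ separating $G'_1 - ab - f$ from $G'_2 - ab$. The planarity criterion says $(G-ab)-f$ is planar iff $G'_1 - f$ and $G'_2$ are both planar. Since $G'_2$ is nonplanar, $(G-ab)-f$ is nonplanar.

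\textbf{Case 2:} without loss of generality, $G'_1$ is planar and $G'_2$ is nonplanar. Here I first pin down the structure of $G'_2$: because $G'_2$ is a proper minor of $G$, it is not NE, so some $f \in E(G'_2)$ has $G'_2 - f$ planar; if $f \neq ab$, then $G - f = G'_1 \cup (G'_2 - f)$ glued at $ab$ would be planar, contradicting that $G$ is NE, so $f = ab$ and $G'_2 - ab$ is planar. The same reasoning shows that $G'_2 - g$ is nonplanar for every $g \in E(G'_2) \setminus \{ab\}$, else $G - g$ would be planar. Next, pick any $f \in E(G'_1) \setminus \{ab\}$; such an edge exists because $V(G_1)$ is nonempty and $G$ is connected (the hypothesis $\delta(G) \geq 3$ gives even more room). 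I then verify that $G - f$ is NE: for $g = ab$ or $g \in E(G'_1) \setminus \{ab, f\}$ the minor $(G-f)-g$ retains $G'_2$ as a nonplanar side, hence is nonplanar; for $g \in E(G'_2) \setminus \{ab\}$, $(G-f)-g = (G'_1 - f) \cup (G'_2 - g)$ glued at $ab$ is nonplanar by the planarity criterion since $G'_2 - g$ is nonplanar. Thus $G - f$ is a proper NE minor of $G$, contradicting MMNE.

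The main obstacle is Case 2, where I must juggle three independent uses of minor-minimality at once: (i) $G'_2$ is a proper minor and not NE, which forces $ab$ to be its unique apex edge; (ii) $G$ being NE combined with $G'_1$ being planar forces $G'_2 - g$ to remain nonplanar for every non-$ab$ edge $g$ of $G'_2$; and (iii) these two facts must then be assembled, via the $2$-cut planarity criterion, to certify that $G - f$ is NE for any $f \in E(G'_1) \setminus \{ab\}$. Keeping straight which pieces survive in which minor — $(G-ab)-f$ in Case 1 versus $(G-f)-g$ in Case 2 — is the bookkeeping that requires the most care.
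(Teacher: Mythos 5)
Your proof has a genuine gap, and the quickest way to see it is that you never really use the hypothesis $\delta(G)\geq 3$ (you invoke it only to note that $E(G'_1)\setminus\{ab\}$ is nonempty, which already follows from $G_1\neq\varnothing$ and $2$-connectivity). But the lemma is false without that hypothesis: the paper exhibits MMNE graphs with a degree-two vertex $v$, and by Theorem~\ref{mmnedeg2triangle} the two neighbors $a,b$ of such a $v$ are adjacent, so $\{a,b\}$ is a $2$-cut with $ab\in E(G)$. Running your Case 2 on such a graph (with $G_1=\{v\}$ and $G'_1$ a triangle) locates the error: for $f=va$ and $g=ab$, the minor $(G-f)-ab$ does \emph{not} ``retain $G'_2$ as a nonplanar side'' --- deleting $ab$ deletes it from $G'_2$ as well, leaving only $G'_2-ab$, which you have just shown is planar; so $(G-f)-ab$ is planar and $G-f$ is not NE. In general, to make $(G-f)-ab$ nonplanar you must recover $ab$ as a minor by routing an $a$-$b$-path through $G'_1-ab-f$, and such a path survives only if $f$ is not an $a$-$b$-bridge of $G'_1-ab$. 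This is exactly where $\delta(G)\geq 3$ must enter: it rules out $G'_1-ab$ being a bare $a$-$b$-path, so a non-bridge $f$ exists and a corrected version of your Case 2 would go through.

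The same over-strong use of the ``$2$-cut planarity criterion'' undermines Case 1. The biconditional ``$H$ is planar iff both sides plus $ab$ are planar'' needs an $a$-$b$-path on each side for the only-if direction, and after you delete $f$ from $G-ab$ that path may be gone: $(G-ab)-f$ contains only $G'_2-ab$ as a subgraph, not $G'_2$, so when $G'_2-ab$ is planar your conclusion that $(G-ab)-f$ is nonplanar does not follow. The paper avoids this by splitting the both-nonplanar case into subcases according to the planarity of $G'_1-ab$ and $G'_2-ab$; in the subcase where both are planar it does not claim $G-ab$ is NE at all, but instead extracts one of the six graphs $G_1\ddotcup G_2$ of Theorem~\ref{thmMMNEK21} (shown there to be NE) as a proper minor. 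You would need either that detour or a careful verification that a surviving $a$-$b$-path exists on the appropriate side for every choice of $f$.
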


\begin{proof}
Let $G$ be as described. Let $G - a,b = G_1 \sqcup G_2$ and let $G_i'$ be the induced subgraph of $G$ on the vertices $V(G_i) \cup \{a,b \}$.
For a contradiction, suppose that $ab$ is an edge in $G$. There are
three
cases to consider depending on which of $G'_1$ and $G'_2$ is planar.
If both are planar, then $G$ is
the union of two planar graphs that share an edge and therefore planar.
This contradicts $G$ being MMNE.

Next suppose exactly one of $G'_1$ and $G'_2$ is planar, say $G'_1$.
If $e \in \E(G'_2)$ is an edge other than $ab$, then $G'_2-e$ must be nonplanar.
For otherwise, $G-e$, the union of two planar graphs, $G'_1$ and $G'_2-e$
along $ab$, is planar contradicting $G$ NE. If $G'_2 -ab$ is also nonplanar, then
$G'_2$ is a proper subgraph that is NE,
which contradicts $G$ minor minimal. So, $G'_2 -ab$ is planar.

This means that $G-ab$ is the union of the planar $G'_1-ab$ and the planar $G'_2-ab$, joined at two vertices.
However since $G$ is NE, $G-ab$ is nonplanar, so it has a subgraph homeomorphic to $K_5$ or $K_{3,3}$.
Using Lemma~\ref{lempath}, we know that the subgraph must use only a path through one of $G'_1$, $G'_2$, and nothing else in that component.
This means that one of $G_i^*$ is an edge away from containing a K-subgraph,
where $G_i^*$ denotes $G'_i - ab$. Since $G'_1$ is planar, it must be
$G_2^*$ that contains a subdivision of $K_5$ or $K_{3,3}$ with an edge removed.
Thus, $G'_2$ has a subgraph homeomorphic to $K_5$ or $K_{3,3}$ that uses the
edge $ab$.

Replace $G_1^*$ by the path of Lemma~\ref{lempath} to form a subgraph
$H$ of $G$.  We claim that $H$ is NE. Indeed, deleting $e \in \E(G_2^*)$
leaves $H-e$ 
with the nonplanar subgraph $G'_2-e$. Deleting $ab$ or an
edge in the $G_1^*$ path leaves an $a$-$b$-path that
completes a K-subgraph in $G_2^*$. Since $G$ is minor minimal, $G$ must be
$H$. However, $H$ has at least one degree two vertex, contradicting $\delta(G) \geq 3$.

Finally, we have the case where
$G'_1$ and $G'_2$ are both nonplanar.
Here there are 3 subcases to consider depending on
which of $G_1^* = G'_1 - ab$ and $G_2^* = G'_2 -ab$ is planar.

Suppose first that both $G_1^*$ and $G_2^*$ are planar.
In this case, each of $G'_1$ and $G'_2$ has a K-subgraph that contains $ab$.
It follows that one of the graphs of Theorem~\ref{thmMMNEK21} is a proper minor
of $G$, contradicting $G$'s minor minimality.

In the subcase where both $G_1^*$ and $G_2^*$ are nonplanar,  let $e$ be the apex of $G-ab$. Since the only edge common to $G_1^*$ and $G_2^*$ is $ab$, $e$ is in exactly one of $G_1^*$ and $G_2^*$.
Whichever it isn't in will constitute a nonplanar subgraph of $G-ab,e$, which is a contradiction.

Finally, assume exactly one of $G_1^*$ and $G_2^*$ is planar, say
$G_1^*$.  As above, $G_1^*$ planar and $G'_1$ not implies $G'_1$ contains a
K-subgraph including $ab$ as an edge. On the other hand,
since $G_2^*$ is nonplanar, it has a K-subgraph $H$.
Let $M = G'_1 \cup H$ and, for a contradiction, suppose that $M$ is a proper minor.
Then $M$ must have an apex. However, if we remove an edge $e$ from $G'_1$, then $H$ remains, meaning $M-e$ is nonplanar.
If we remove $e$ from $H$ (which shares no edges with $G'_1$ since it is a subgraph of $G_2^*$),
then $G'_1$ remains, meaning $M-e$ is still nonplanar.
Therefore, no matter what edge we remove from $M$, we can't make it planar and $M$ is NE.
However, $M$ is a minor of $G$, so this contradicts $G$ being MMNE.
Therefore, $H$ is not a proper minor of $G_2^*$, so $G_2^*$ is a subdivision
of $K_5$ or $K_{3,3}$. A similar argument (replace $H$ by $K_5$ or $K_{3,3}$)
shows, in fact, $G_2^*$ {\em is} $K_5$ or $K_{3,3}$ and not just a subdivision.
However, since $ab$ is not an edge of $G_2^*$, then $G_2^*$ must be $K_{3,3}$.

Thus $G_2^* = K_{3,3}$ and $G'_1$ contains a subdivision of $K_{3,3}$
or $K_5$ 
that includes $ab$ as an edge. This means $G$ includes one
of the graphs of Theorem~\ref{thmMMNEK21} as a proper minor and is not minor
minimal.

This completes the last subcase of the last case and shows that $ab$ is not an
edge of $G$.
\end{proof}

For $G$ of connectivity two with cut set $\{a,b\}$, we have $G -a,b = G_1 \sqcup G_2$. We will use $G'_i$ to denote the induced subgraph on $\V(G_i) \cup \{a,b\}$.

\begin{lemma} \label{lemMMNE_NP_SUB}%
	If $G$ is MMNE, $\K(G) = 2$, and $G'_1$ and $G'_2$ are both nonplanar, then $G'_1 = G'_2 = K_{3,3}$.
\end{lemma}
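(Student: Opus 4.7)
The plan is to first force each $G'_i$ to be itself a K-subgraph, then improve this to $G'_i$ being exactly $K_5$ or $K_{3,3}$, and finally rule out $K_5$ using the fact that $ab \notin E(G)$.

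First, since each $G'_i$ is nonplanar, select a K-subgraph $H_i \subseteq G'_i$. By Lemma~\ref{lemAB_NOT_EDGE} we have $ab \notin E(G)$, so $H_1$ and $H_2$ share no edges (their common vertex set lies in $\{a,b\}$, with no edge between these vertices). Form the subgraph $M = H_1 \cup H_2$ of $G$, which is also a minor of $G$. For any $f \in E(M)$, the edge $f$ lies in exactly one of $H_1, H_2$, so $M - f$ still contains the other K-subgraph and is nonplanar; hence $M$ is NE. Since $G$ is MMNE, the minor $M$ cannot be proper, so $M = G$. This forces $G'_i = H_i$ for each $i$, and so each $G'_i$ is a subdivision of $K_5$ or $K_{3,3}$.

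Next, I would upgrade this to $G'_i \in \{K_5, K_{3,3}\}$. Suppose to the contrary that some $G'_i$ is a proper subdivision, with degree-two vertex $v$, and let $e$ be an incident edge; then $G/e$ is a proper minor. If $v \notin \{a,b\}$, then $e$ lies entirely in $G'_i$, so $G/e = (G'_i/e) \cup G'_{3-i}$, where $G'_i/e$ is still a K-subgraph (contracting at a degree-two vertex preserves the homeomorphism type) and $G'_{3-i}$ is untouched. If $v \in \{a,b\}$, say $v = a$ with $e = au$ for $u \in V(G_i)$, then the contraction merges $a$ with $u$, but $G'_{3-i}$ persists as a K-subgraph of the quotient with the merged vertex now playing the role of $a$, and $G'_i/e$ remains a K-subgraph as before. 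In either case, the union-of-two-K-subgraphs argument from the first paragraph applies to show $G/e$ is NE, contradicting MMNE. Hence each $G'_i \in \{K_5, K_{3,3}\}$.

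Finally, rule out $G'_i = K_5$: since $a, b \in V(G'_i)$ and every pair of vertices in $K_5$ is adjacent, $G'_i = K_5$ would force $ab \in E(G)$, contradicting Lemma~\ref{lemAB_NOT_EDGE}. Therefore $G'_1 = G'_2 = K_{3,3}$, as claimed. The most delicate step is the subdivision-elimination in the second paragraph: one must verify that contracting at a degree-two vertex preserves both K-subgraphs even when the vertex coincides with a cut-set vertex $a$ or $b$, where the contraction reaches across into the opposite side of the cut.
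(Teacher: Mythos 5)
Your proof is correct and rests on the same core mechanism as the paper's: once $ab \notin E(G)$ is supplied by Lemma~\ref{lemAB_NOT_EDGE}, the union of an edge-disjoint nonplanar piece from each side of the cut is an NE minor of $G$, so minor-minimality forces each $G'_i$ to be minor-minimal nonplanar, and $K_5$ is excluded because it would put $ab$ back into $E(G)$. The only difference is organizational: the paper gets there in one step by citing that the minor-minimal nonplanar graphs are exactly $K_5$ and $K_{3,3}$ (so a nonplanar $G'_1$ other than these has a nonplanar proper minor $H$, and $H \cup G'_2$ is a proper NE minor), whereas you unpack that fact into your subdivision-then-smoothing argument; note that both your proof and the paper's silently import the $\delta(G) \geq 3$ hypothesis of Lemma~\ref{lemAB_NOT_EDGE}, which does not appear in the statement being proved but does hold where the lemma is applied.
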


\begin{proof}
Let $G$ be as described.
For the sake of contradiction, suppose
that, without loss of generality, $G'_1$ is nonplanar but not $K_{3,3}$.
Then either $G'_1 = K_5$ or $G'_1$ has a nonplanar proper minor.
If $G'_1 = K_5$, then $ab$ is an edge in $G$, which contradicts Lemma~\ref{lemAB_NOT_EDGE}.
If $G'_1$ has a nonplanar proper minor, $H$, then $H \cup G'_2$ is a proper minor of $G$.
Since there are no edges between $H$ and $G'_2$, the edge apex of $H \cup G'_2$ must be in exactly one of $H$ and $G'_2$.
Whichever one of those two does not contain the edge apex will be a nonplanar subgraph even when the edge is removed.
This contradicts the fact that $G$ is MMNE. Therefore $G'_1 = K_{3,3}$. A similar argument can be made for $G'_2$.
\end{proof}

\begin{lemma}
	If $G$ is MMNE, $\K(G) = 2$, with cut set $\{a,b\}$, $\delta(G) \geq 3$, and
	both
	$G'_1$ and $G'_2$ are planar, then $G_i' \in \{ K_5-e, K_{3,3}-e \}$ with
	$ab$ as the missing edge.
\end{lemma}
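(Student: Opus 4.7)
My plan has three main stages: (1) show both $G'_1+ab$ and $G'_2+ab$ are nonplanar, (2) extract Kuratowski subdivisions from each $G'_i$ and use minor minimality to make them exhaust $G'_i$, and (3) use $\delta(G)\geq 3$ to strip off the subdivision. The central planarity tool is the standard $2$-sum fact: the union of two planar graphs along two non-adjacent vertices $\{a,b\}$ is planar if and only if each summand admits a planar embedding with $a,b$ on a common face, equivalently, if and only if both $G'_i+ab$ are planar. Since $G$ is nonplanar, at least one of $G'_i+ab$ must be nonplanar; without loss of generality take $G'_1+ab$ nonplanar.

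To rule out the case $G'_2+ab$ planar, I will exhibit a proper NE minor of $G$, contradicting MMNE. Since $\K(G)=2$ forces $G'_2$ to be $2$-connected, Menger's theorem gives two internally disjoint $a$-$b$-paths $P_1,P_2\subseteq G'_2$. Set $H=G'_1\cup P_1\cup P_2$, a subgraph of $G$. The hypothesis $\delta(G)\geq 3$ makes $H\subsetneq G$: if $G'_2=P_1\cup P_2$, internal vertices of the two paths would have $G$-degree two. To check $H$ is NE, note that if $e\in \E(P_i)$ then $P_{3-i}$ remains as an $a$-$b$-path in $H-e$, and contracting it exhibits $G'_1+ab$ as a nonplanar minor; if instead $e\in \E(G'_1)$, then applying the $2$-sum fact to $G-e=(G'_1-e)\cup G'_2$, combined with $G'_2+ab$ planar and $G$ NE, forces $(G'_1-e)+ab$ to be nonplanar, and again $H-e$ is nonplanar. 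This proper NE minor contradicts MMNE of $G$, so $G'_2+ab$ is also nonplanar.

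Now both $G'_i$ are planar and $G'_i+ab$ is nonplanar, so $G'_i$ contains a subdivision $H_i$ of $K_5-ab$ or $K_{3,3}-ab$, with $a,b$ the endpoints of the missing edge. Set $H=H_1\cup H_2$. Because $K_5-ab$ and $K_{3,3}-ab$ are both $2$-edge-connected between $a$ and $b$, deleting any edge $e$ of $H_i$ leaves an $a$-$b$-path in $H_i-e$; contracting that path exhibits $H_j+ab$ as a nonplanar minor of $H-e$, so $H$ is NE. Minor minimality forces $H=G$, hence $G'_i=H_i$ is a subdivision. Finally, any subdivision vertex lies in $\V(G_i)$, has all its $G$-edges inside $H_i=G'_i$, and has $H_i$-degree two, giving $G$-degree two, contradicting $\delta(G)\geq 3$. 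So no subdivision occurs and $G'_i\in\{K_5-e, K_{3,3}-e\}$ with $ab$ the missing edge. I expect the subtlest step to be verifying that the first $H=G'_1\cup P_1\cup P_2$ is NE; the case $e\in \E(G'_1)$ genuinely uses the $2$-sum criterion applied to $G-e$ together with the fact that $G$ itself is NE.
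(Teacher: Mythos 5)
Your overall architecture matches the paper's: assume one side plus $ab$ is planar, manufacture a proper NE minor to force both $G'_i+ab$ nonplanar, then use minor minimality to pin down $G'_i$. But there is a genuine gap at the step where you produce the two $a$-$b$-paths in $G'_2$. You assert that $\K(G)=2$ forces $G'_2$ to be $2$-connected and then invoke Menger. That inference is false: $G'_2$ can have a cut vertex separating $a$ from $b$ while $G$ remains $2$-connected, because $a$ and $b$ are also joined through $G'_1$. For instance, take $G'_2$ to be two copies of $K_4$ sharing a single vertex $v$, with $a$ in one copy and $b$ in the other; every $a$-$b$-path in $G'_2$ passes through $v$, every vertex of $G_2$ has degree at least three, and $G$ can still be $2$-connected. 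So neither $\K(G)=2$ nor $\delta(G)\geq 3$ buys you the paths. The existence of two edge-disjoint $a$-$b$-paths is true here, but it has to come from the NE hypothesis: if every $a$-$b$-path in $G'_2$ used a common edge $e'$, then $G-e'$ would be planar --- any Kuratowski subgraph of $G-e'$ must straddle the cut, so by Lemma~\ref{lempath} one side of it is an $a$-$b$-path, and both resolutions (a path in $G'_2-e'$, which does not exist, or a path in $G'_1$, which would make a nonplanar subgraph of the planar $G'_2+ab$) are impossible. This is exactly how the paper's proof establishes the corresponding paths (on the other side of the cut, before replacing that side by a triangle on $ab$), and without some such argument your proper NE minor $H=G'_1\cup P_1\cup P_2$ is not available.

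A second, more repairable imprecision: in stage (2) you claim each $G'_i$ \emph{contains a subdivision} of $K_5-ab$ or $K_{3,3}-ab$ with $a,b$ as the endpoints of the missing edge. The Kuratowski subgraph $K$ of $G'_i+ab$ need not have $a$ and $b$ as branch vertices; if $a$ is an interior vertex of a subdivided edge of $K$, then $K-ab$ is a subdivision of a Kuratowski graph minus an edge together with pendant paths running out to $a$ and $b$, and your key claim that deleting any edge of $H_i$ leaves an $a$-$b$-path fails for an edge on such a pendant path (indeed $H-e$ can then be planar). The statement is correct as a \emph{minor} statement --- contract the pendant paths first --- and with that reading your NE check for $H=H_1\cup H_2$, the appeal to minor minimality, and the final degree-two argument all go through; the paper shortcuts this last stage by citing that the six graphs $G'_1\ddotcup G'_2$ with $G'_i\in\{K_5-e,K_{3,3}-e\}$ are already known to be NE.
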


\begin{proof}
Let $G$ be as described.
For a contradiction, assume that $G'_1 + ab$ is planar.
Since $G$ is NE, for every $e \in \E(G)$, $G-e$ is nonplanar and, therefore,
has a K-subgraph, $H$.
By Lemma~\ref{lempath} and our assumption that $G'_1 + ab$ is planar,
$H \cap G'_1$ is an $a$-$b$-path. In particular $G'_2 + ab$ is nonplanar.

Note that there are edge disjoint $a$-$b$-paths $P_1$ and $P_2$ in $G'_1$. If not,
say every $a$-$b$-path goes through the edge $e'$.
Then $G - e'$ must be planar as, by Lemma~\ref{lempath}, a  K-subgraph of $G-e'$ would either use a path in $G'_1$, which is not possible as all such paths pass through $e'$, or else use a path in $G'_2$, which is not possible since $G'_1 + ab$ is planar. The contradiction shows there are edge disjoint paths $P_1$ and $P_2$.

This means we can construct a proper minor $M$ of $G$ by adding a triangle on
$ab$. That is, $\V(M) = \V(G'_2) \cup \{c\}$ and  $\E(M) = \E(G'_2) \cup \{ab,bc,ac\}$.
Since $G$ is NE, for any $e \in \E(G'_2)$, $G-e$ is nonplanar with a
K-subgraph that uses only a path in $G'_1$. So, $M-e$ is also nonplanar. On the other hand, if we delete any $e$ in  $\{ab, ac, bc\}$, we are left with a subgraph
of $M-e$ homeomorphic to $G'_2 + ab$. So $M-e$ is again nonplanar. Then
$M$ is a proper NE minor of $G$ contradicting $G$ minor minimal.

We conclude $G'_1 + ab$ is nonplanar. A similar argument shows $G'_2 + ab$
is nonplanar as well. Then $G$ must have one of the NE graphs $G'_1 \ddotcup G'_2$ with $G'_i \in \{K_5 -e, K_{3,3}-e \}$ as a minor. Since $G$ is minor minimal,
$G$ is a graph of this form.
\end{proof}

\begin{lemma}
    If $G$ is MMNE, $\K(G) = 2$, $\delta(G) \geq 3$, $G'_1$ is planar, and $G'_2$ is nonplanar,
	then $G'_1 \in \{ K_5-e, K_{3,3}-e \}$ sharing two vertices and no edges with $G'_2 = K_{3,3}$.
\end{lemma}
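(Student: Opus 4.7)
The plan is to follow the template of the preceding lemma, now exploiting the fact that $G_2'$ is nonplanar. By Lemma~\ref{lemAB_NOT_EDGE}, $ab \notin \E(G)$, so $G_1'$ and $G_2'$ automatically share only the vertices $\{a,b\}$ and no edges, disposing of that part of the conclusion. I would first show $G_1' + ab$ is nonplanar by a minor-exchange: the proper minor $G_2' + ab$ of $G$, obtained by contracting an $a$-$b$-path in $G_1'$ and deleting the remaining vertices of $\V(G_1)$, is nonplanar, so MMNE supplies a planarizing edge $f$. Since $f = ab$ would make $G_2'$ planar, in fact $f \in \E(G_2')$ with $G_2' + ab - f$ planar. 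Then $G - f$ is nonplanar (because $G$ is NE), and as a union of the two planar graphs $G_1'$ and $G_2' - f$ along $\{a,b\}$, Lemma~\ref{lempath} forces either $G_1' + ab$ or $(G_2'-f) + ab$ to be nonplanar; the latter is excluded by the choice of $f$.

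The substantive claims $G_2' = K_{3,3}$ and $G_1' \in \{K_5-e, K_{3,3}-e\}$ then follow from two parallel minor-minimality arguments. For $G_2'$: contract subdivision edges of a K-subgraph of $G_2'$ and delete extraneous structure in $\V(G_2)$ to obtain a minor $K^\flat \in \{K_5, K_{3,3}\}$ of $G_2'$ with $\{a,b\} \subseteq \V(K^\flat)$. If $G_2' \neq K^\flat$, then $M := G_1' \cup K^\flat$ (glued at $\{a,b\}$) is a proper minor of $G$, and I would check it is NE: removing any edge of $G_1'$ leaves $K^\flat$ nonplanar, while removing an edge $e$ of $K^\flat$ yields a union of two planar graphs along $\{a,b\}$ whose nonplanarity follows from Lemma~\ref{lempath}, using that $G_1' + ab$ is nonplanar and $K^\flat - e$ retains an $a$-$b$-path (by $2$-connectivity of $K^\flat$). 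This contradicts MMNE, so $G_2' = K^\flat$; since $ab \notin \E(G)$, $K^\flat \neq K_5$, forcing $G_2' = K_{3,3}$. For $G_1'$: any K-subgraph of $G_1' + ab$ must use $ab$ because $G_1'$ is planar, so the analogous reduction produces $K^\sharp \in \{K_5, K_{3,3}\}$ as a minor of $G_1' + ab$ with $ab \in \E(K^\sharp)$, hence $K^\sharp - ab \in \{K_5-e, K_{3,3}-e\}$ is a minor of $G_1'$. The proper minor $M' := (K^\sharp - ab) \cup K_{3,3}$ is NE by the same argument (now invoking $2$-connectivity of $K_{3,3}-e$), forcing $G_1' = K^\sharp - ab$.

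The main technical hurdle is arranging the reductions so that $a$ and $b$ persist as named vertices of $K^\flat$ and $K^\sharp$: if either appears as a degree-two subdivision vertex in an intermediate K-subgraph, the contractions absorbing the subdivision must preserve its label. This is where the hypothesis $\delta(G) \geq 3$ is essential---any other subdivision vertex would lie in $\V(G_2)$ with no neighbors in $G_1'$ and hence degree $2$ in $G$, violating the minimum-degree bound---so all subdivision vertices outside $\{a,b\}$ can (and must) be contracted away, producing an honest $K_5$ or $K_{3,3}$ minor that retains both $a$ and $b$.
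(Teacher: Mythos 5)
Your opening step (establishing that $G'_1+ab$ is nonplanar) is correct and is in fact a cleaner route than the paper's: you only need one planarizing edge $f$ for the proper minor $G'_2+ab$, whereas the paper shows $G'_2+ab$ is NE by checking every edge. The subsequent ``replace one side by a canonical Kuratowski piece and verify the result is NE'' arguments also match the paper's strategy and are sound as far as they go. The problem is in how you produce the minor $K^\flat \in \{K_5,K_{3,3}\}$ of $G'_2$ with $\{a,b\}\subseteq \V(K^\flat)$, which is the crux of the whole lemma.

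First, your justification via $\delta(G)\geq 3$ rests on a false premise: a subdivision (degree-two) vertex of a K-subgraph $K$ of $G'_2$ need not have degree two in $G$, because $G'_2$ may contain many edges not in $K$. So $\delta(G)\geq 3$ does not force subdivision vertices to be absorbable in the way you claim; in any case no such hypothesis is needed to contract them away, since you may first delete the extraneous edges of $G'_2$. (The hypothesis $\delta(G)\geq 3$ enters legitimately only through Lemma~\ref{lemAB_NOT_EDGE}, which you invoke at the start.) Second, and more seriously, you never address the case where a K-subgraph $K$ of $G'_2$ contains \emph{neither} $a$ nor $b$ --- e.g.\ $G'_2$ could consist of a Kuratowski subdivision living entirely inside $G_2$ with $a$ and $b$ attached to it by edges. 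Relabelling subdivision vertices does nothing here; you must route $a$ and $b$ into $K$ and contract. The paper does exactly this: it takes an $a$-$b$-path $P$ through $G_2$, observes that $K$ and the nonplanar $G'_1\cup P$ must intersect (else $G$ has a disconnected NE proper minor), and contracts along $P$ to place $a$ and $b$ on $K$. Without this step (or an equivalent appeal to $2$-connectivity of $G$ and Menger to get two disjoint paths from $\{a,b\}$ into $\V(K)$ inside $G'_2$), the existence of $K^\flat$ containing both $a$ and $b$ is unproven, and everything downstream --- including the conclusion that $K^\flat$ shares the two vertices $a,b$ with $G'_1$ and that $ab\notin\E(G)$ rules out $K_5$ --- is unsupported. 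The same omission affects your construction of $K^\sharp$ only mildly (there, any K-subgraph of $G'_1+ab$ must contain the edge $ab$, hence both vertices), but for $G'_2$ it is a genuine missing idea.
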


\begin{proof}
Let $G$ be as described.
For a contradiction, suppose $G'_1 + ab$ is planar. Then $G'_2+ab$ must
be NE. Indeed, if we delete $ab$, we're left with the nonplanar $G'_2$. Let
$e \in \E(G'_2)$. Since $G$ is NE, $G-e$ is nonplanar and has a K-subgraph $K$.
If $K$ uses at most one of $\{a,b\}$, then K lies entirely in $G'_2$ and avoids $e$. So, $(G'_2+ab)-e$ is nonplanar in this case. On the other hand, if $\{a,b \} \subset \V(K)$, then, by Lemma~\ref{lempath} and since $G'_1 + ab$ is planar, the part of $K$ in $G'_1$ is an $a$-$b$-path. So using edge $ab$ instead, $K$ remains as a
K-subgraph of $(G'_2+ab)-e$, which is again nonplanar. However $G'_2+ab$ NE
contradicts $G$ minor minimal. We conclude $G'_1 + ab$ is nonplanar.

This means $G'_1$ has one of $K_5 -e$ and $K_{3,3}-e$ as a minor with
the missing edge corresponding to $ab$. Replace $G'_1$ by its minor $K_5-e$
or $K_{3,3}-e$, call it $H$, to form $M = H \cup G'_2$, a minor of $G$. We claim $M$ is again NE. Indeed, if we delete $e \in \E(H)$, $G'_2$ shows $M-e$ is nonplanar.
For $e \in \E(G'_2)$, we know $G - e$ has a K-subgraph $K$. If $K$ sees at most one of $a$ and $b$, it must lie entirely in $G'_2$ (since $H$ is planar) and $M-e$ is nonplanar. If $\{a,b\} \subset \V(K)$, then, by Lemma~\ref{lempath}, $K$ is simply a
path on one side of the $2$-cut. If $K$ is a path in $G'_1$, then replace that
by a path in $H$ to recognize $K$ as a subgraph of $M-e$, which is therefore nonplanar. On the other hand, if $K$ is a path in $G'_2$, this path avoids $e$. So, we can use $H$ along with that path to again find a nonplanar subgraph of $M-e$.
Since $G$ is minor minimal, $G = M$ and $G'_1  \in \{K_5-e, K_{3,3}-e \}$ as required.

Now, $G'_2$ being nonplanar has a K-subgraph $K$. Also, there must be an
$a$-$b$-path $P$ in $G'_2$ as otherwise $G$ has connectivity one. Moreover,
both $K$ and $G'_1 \cup P$ are nonplanar, and so they must overlap as otherwise
$G$ has a proper disconnected MMNE minor. This means $P$ passes through $K$ and,
by contracting edges in $P$ if necessary, we can assume $G$ has a minor
with $\{ a, b \} \subset \V(K)$. From this, form the minor $M = G'_1 \cup K$. If $K$
is a subdivision of $K_5$, Then $M$ and hence $G$ has the MMNA graph
$G'_1 \ddotcup (K_5-e)$ as a proper minor, which is a contradiction. So,
$K$ is a subdivision of $K_{3,3}$. After contracting edges, $G$ either has
the MMNA $G'_1 \ddotcup (K_{3,3}-e)$ as a proper minor, which is a contradiction,
or else $G$ has $G'_1 \ddotcup K_{3.3}$ as a minor where $a$ and $b$ are in
the same part of $K_{3,3}$. Since $G$ was minor minimal, we conclude
$G = G'_1 \ddotcup K_{3,3}$. In other words, as required,
$G'_2 = K_{3,3}$ sharing two vertices and no edge with
$G'_1 \in \{K_5-e, K_{3,3}-e\}$.
\end{proof}

\begin{theorem}
	If $G$ is MMNE, $\K(G) = 2$, and $\delta(G) \geq 3$, then $G$ is one
	of the six graphs of Figure~\ref{figMMNEK2}.
\end{theorem}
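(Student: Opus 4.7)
The plan is to combine the three preceding lemmas, which together treat the three cases determined by the planarity of $G'_1$ and $G'_2$, and observe that each case yields exactly the graphs of $G'_1 \ddotcup G'_2$ form with $G'_i \in \{K_5-e, K_{3,3}-e, K_{3,3}\}$ that appear in Figure~\ref{figMMNEK2}.

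First I would invoke Lemma~\ref{lemAB_NOT_EDGE} to conclude $ab \notin \E(G)$, so we are in the setting of the three structural lemmas above. Since $G$ is NE, it is nonplanar, which forces at least one of $G'_1,G'_2$ to be nonplanar (otherwise $G$ is the union of two planar graphs along two vertices with no new edges). Now I split into three cases. In the case where both $G'_1$ and $G'_2$ are planar, the first of the two unnamed lemmas shows that each is obtained from $K_5-e$ or $K_{3,3}-e$ with $ab$ as the missing edge, giving the three graphs $(K_5-e)\ddotcup(K_5-e)$, $(K_5-e)\ddotcup(K_{3,3}-e)$, and $(K_{3,3}-e)\ddotcup(K_{3,3}-e)$. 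In the case where exactly one of $G'_1,G'_2$ is nonplanar, the second lemma shows the planar side is $K_5-e$ or $K_{3,3}-e$ (with missing edge $ab$) and the nonplanar side is $K_{3,3}$ sharing only the two cut vertices, giving the two graphs $(K_5-e)\ddotcup K_{3,3}$ and $(K_{3,3}-e)\ddotcup K_{3,3}$. In the case where both are nonplanar, Lemma~\ref{lemMMNE_NP_SUB} forces $G'_1 = G'_2 = K_{3,3}$, giving the single graph $K_{3,3}\ddotcup K_{3,3}$.

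Finally, I would observe that all six resulting graphs were already shown to be MMNE in Theorem~\ref{thmMMNEK21}, so the six candidates are indeed distinct MMNE graphs, and conversely every MMNE graph with $\K(G)=2$ and $\delta(G)\geq 3$ is one of them. Since the lemmas have done the structural work, there is essentially no obstacle remaining: the argument is just a tidy case analysis on the planarity status of $G'_1$ and $G'_2$, matched against the three lemmas and the list in Figure~\ref{figMMNEK2}. The only mild subtlety to check is that in each case the identification of $G$ is on two vertices with no shared edge (since $ab \notin \E(G)$), which matches the $\ddotcup$ notation used for the six graphs in the figure.
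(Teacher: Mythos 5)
Your proposal is correct and follows essentially the same route as the paper: the theorem is just the assembly of the three preceding lemmas according to the planarity of $G'_1$ and $G'_2$, yielding $3+2+1=6$ graphs, with Theorem~\ref{thmMMNEK21} supplying the converse verification. One remark to fix: your claim that ``$G$ nonplanar forces at least one of $G'_1,G'_2$ to be nonplanar, otherwise $G$ is the union of two planar graphs along two vertices'' is false --- gluing two planar graphs along two vertices (with no shared edge) can produce a nonplanar graph, and indeed three of the six graphs in Figure~\ref{figMMNEK2}, such as $(K_5-e)\ddotcup(K_5-e)$, arise exactly this way with both $G'_i$ planar. The error is harmless here only because you go on to treat the both-planar case anyway via the relevant lemma, but the sentence contradicts your own case analysis and should be deleted.
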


\begin{proof}
We showed that these six graphs are MMNE in Theorem~\ref{thmMMNEK21}.
The first lemma immediately gives that if $G'_1$ and $G'_2$ are both nonplanar, then they are both $K_{3,3}$.
The second and third lemmas complete the other parts of the proof of the Theorem.
In total, these account for six graphs: one from the first lemma, three from the second, and two from the third.
\end{proof}

The restriction on the minimum degree in the last theorem is necessary. Indeed, there are many MMNE graphs with $\delta(G) = 2$ (meaning $\K(G) \leq 2$). We next show that this is the minimum.

\begin{theorem}
\label{thmmmnemindeg2}%
    The minimum vertex degree in an MMNE graph is at least two.
\end{theorem}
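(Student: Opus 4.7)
The plan is to follow the same template as the first two cases of the proof of Theorem~\ref{thmmmnamindeg3}, exploiting the fact that attaching an isolated or pendant vertex to a planar graph preserves planarity. Suppose, for contradiction, that $G$ is MMNE and has a vertex $v$ with $\deg(v)\le 1$; in each case the goal is to exhibit an edge whose deletion yields a planar graph, contradicting $G$ being NE.

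First suppose $\deg(v)=0$. Then $G-v$ is a proper minor, so by minor-minimality $G-v$ is not NE; hence either $G-v$ is planar or it has an edge apex $e$. If $G-v$ is planar, then $G=(G-v)\sqcup K_1$ is planar, contradicting $G$ being NE. Otherwise $(G-v)-e$ is planar, and $G-e = ((G-v)-e)\sqcup\{v\}$ is planar as well, since adding an isolated vertex does not affect planarity; this again contradicts $G$ NE.

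Next suppose $\deg(v)=1$, and let $e=vu$ be the unique edge at $v$. Again $G-v$ is a proper, non-NE minor of $G$. If $G-v$ is planar, then $G-e$ is just the planar graph $G-v$ with $v$ re-added as an isolated vertex, hence planar, contradicting $G$ NE. Otherwise let $e'$ be an edge apex for $G-v$; since $e\notin \E(G-v)$, we have $e'\ne e$. Then $(G-v)-e'$ is planar, and $G-e'$ is obtained from this planar graph by attaching $v$ as a pendant via the edge $e$. Pendant attachment preserves planarity, so $G-e'$ is planar, again a contradiction.

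No real obstacle arises: the entire argument rests on the two elementary observations that an isolated vertex and a pendant vertex can be added to any planar embedding without spoiling it. Note the argument genuinely stops at degree two: if $\deg(v)=2$ with incident edges $e,f$, then a contraction minor $G/e$ is natural to consider, but the resulting witness would only give an edge $g$ with $(G/e)-g$ planar, and recovering $G-g$ from this planar graph requires a vertex-split, which does not in general preserve planarity. Thus the bound given by the theorem is sharp, consistent with the MMNE graphs of minimum degree exactly two produced later in this section.
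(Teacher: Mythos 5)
Your proof is correct and follows essentially the same argument as the paper: in each case you delete the low-degree vertex, use minor-minimality to obtain an edge apex of $G-v$, and re-attach $v$ as an isolated or pendant vertex without disturbing planarity. The only difference is that you explicitly split off the subcase where $G-v$ is already planar, which the paper folds into the same sentence.
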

\begin{proof}
    Let $G$ be an MMNE graph.
    Suppose $G$ has some vertex $v$ of degree zero.
    Since $G$ is MMNE and $G-v$ is a proper minor of $G$,
    there is an $e \in \E(G)$ such that $(G-v)-e$ is planar.
    But the addition of a degree zero vertex to a planar graph is planar,
    so $((G-v)-e)+v = G-e$ is planar, which contradicts $G$ being MMNE.

    Next suppose $G$ has some vertex $v$ of degree one.
    Since $G$ is MMNE and $G-v$ is a proper minor of $G$,
    there is an $e \in \E(G)$ such that $(G-v)-e$ is planar.
    But the addition of a degree one vertex to a planar graph is planar
    (since we can shrink down the edge incident to $v$),
    so $((G-v)-e)+v = G-e$ is planar, which contradicts $G$ being MMNE
\end{proof}

Although we cannot completely classify the $\delta(G) = 2$ MMNE graphs, we show that degree two vertices must occur as part of a triangle.

\begin{theorem}
\label{mmnedeg2triangle}
    In an MMNE graph, the neighbors of a degree two vertex
    are themselves neighbors.
\end{theorem}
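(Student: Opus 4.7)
My plan is to argue by contradiction. Suppose $G$ is MMNE with a degree-two vertex $v$ whose neighbors $u$ and $w$ satisfy $uw \notin \E(G)$. Since $G/uv$ is a proper minor of the MMNE graph $G$, it cannot itself be NE, so some edge $e \in \E(G/uv)$ makes $(G/uv) - e$ planar. I will leverage this to produce a planar $G - f$ for some $f \in \E(G)$, contradicting NE.

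The technical backbone is the observation that, because $uw \notin \E(G)$, contracting $uv$ introduces no multi-edge, and $G$ can be recovered from $G/uv$ by subdividing the edge $u'w$ (where $u'$ is the merged vertex) with the new midpoint of degree two playing the role of $v$. Since subdivision preserves planarity in both directions, it follows that for any $f \in \E(G)$ not incident to $v$, the minor $(G/uv) - f$ coincides with $(G - f)/uv$, and this is planar if and only if $G - f$ is planar (applying the same subdivision correspondence to $G - f$, in which $v$ still has degree two with the same neighbors $u, w$).

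The proof then splits into two short cases on $e$. In the first case, $e$ is the edge $u'w$ in $G/uv$ (the one coming from $vw$ in $G$); then $(G/uv) - e$ is exactly $G - v$, so $G - v$ is planar, and reattaching $v$ as a degree-one pendant adjacent to $u$ (a planarity-preserving operation) shows that $G - vw$ is planar, contradicting NE. In the second case, $e$ corresponds to an edge of $G$ not incident to $v$, and the subdivision equivalence from the previous paragraph yields $G - e$ planar directly, again contradicting NE.

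I do not foresee a substantial obstacle: the hypothesis $uw \notin \E(G)$ is precisely what makes the contraction--subdivision correspondence apply cleanly (no multi-edges arise), and the case split on whether $e$ is the special edge inherited from $vw$ is the only real choice in the argument.
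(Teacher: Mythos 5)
Your proposal is correct and follows essentially the same route as the paper: contract one of the two edges at the degree-two vertex, use minor minimality to get an edge $e$ whose deletion planarizes the contraction, rule out the case where $e$ is the newly formed edge via the pendant-vertex/planarity observation, and otherwise transfer planarity back to $G-e$ through the subdivision correspondence (the paper phrases this as $G-e$ and $H-e$ being homeomorphic). The only cosmetic difference is that in the special case the paper derives planarity of $G$ minus the contracted edge while you derive planarity of $G$ minus the other edge at $v$; both give the same contradiction.
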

\begin{proof}
    Let $G$ be an NE graph with a degree two vertex $v$
    with neighbors $a$ and $b$. For a contradiction, suppose
    $ab$ is not an edge of $G$.
    Perhaps $G$ is MMNE so that every proper minor of $G$ is not NE.
    Let $H = G/av$ be the graph that results from contracting
    edge $av$ in $G$.
    Since $G$ is MMNE,
    there must be some edge $e$ in $H$ such that $H-e$ is planar.
    Note that $e$ cannot be the newly formed edge $ab$ in $H$,
    else, since degree one vertices have no impact on the planarity of a graph,
    $G-av$ would also be planar, contradicting $G$ MMNE.
    Consider the graph $G-e$.
    Note that $G-e$ and $H-e$ are homeomorphic,
    so since $H-e$ is planar, $G-e$ is also planar.
    But this contradicts $G$ being MMNE.
\end{proof}

If graph $G$ has a triangle $abc$, a $\TY$ move on $G$ means forming a new graph $G'$ with one additional vertex $v$ (i.e., $V(G') = V(G) \cup \{v \}$) and replacing the edges $ab$, $ac$, and $bc$ with $va$, $vb$, $vc$. So, $G'$ has the same number of edges as $G$ and one additional vertex. In \cite{P}, Pierce shows that $\TY$ often preserves MMNA, as was originally observed by Barsotti in unpublished work. Here we give a similar result for MMNE graphs.

\begin{theorem}
\label{thmmmnety}
    Given an NE graph $G$ with triangle $t$,
    let $G'$ be the result of performing a $\TY$ move on triangle $t$ in $G$,
    and let $v$ be the vertex added in $G'$.
    Graph $G'$ is NE if and only if
    $G'-e_i$ is nonplanar for each $e_i$ incident to $v$.
\end{theorem}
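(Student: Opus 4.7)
The plan is to handle the two directions of the biconditional separately. The forward implication is immediate: if $G'$ is NE then $G'-e$ is nonplanar for every edge $e$ of $G'$, including each $e_i$ incident to $v$. The converse requires more work. Assuming $G'-e_i$ is nonplanar for each $e_i \in \{va, vb, vc\}$, I would show $G'-e$ is nonplanar for every other edge $e \in \E(G')$, which amounts to showing it for every non-triangle edge $e$ of the original graph $G$. Since $G$ is NE by assumption, $G-e$ is nonplanar, and this will be the main input.

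The first step is a contraction identity: contracting $va$ in $G'$ merges $v$ with $a$ and turns $vb, vc$ into the edges $ab, ac$, so that $(G')/va = G - bc$; symmetrically $(G')/vb = G - ac$ and $(G')/vc = G - ab$. Consequently $(G'-e)/va = G - e - bc$, and similarly for the other two contractions. Hence if any one of $G - e - ab$, $G - e - ac$, $G - e - bc$ is nonplanar, then $G'-e$ has a nonplanar minor and is itself nonplanar.

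This reduces the problem to the subcase where all three of $G - e - ab$, $G - e - ac$, $G - e - bc$ are planar. In that situation every Kuratowski subgraph $K$ of the nonplanar $G-e$ must use all three of $ab, ac, bc$, and so $K$ contains the triangle $abc$ as a subgraph. Since subdivisions of $K_{3,3}$ have girth at least four, $K$ must be a subdivision of $K_5$. I would then use the uniqueness of the branch between any two branch vertices of a subdivision to force each of $a, b, c$ to be a branch vertex of $K$: if, say, $a$ were a degree-two subdivision vertex, its neighbors in $K$ would have to be $b$ and $c$, placing $a$ inside a single branch between $b$ and $c$, while the edge $bc \in \E(K)$ would furnish a second branch between the same pair, a contradiction.

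Finally, letting $d$ and $f$ denote the other two branch vertices of the $K_5$-subdivision $K$, the edges $va, vb, vc$ together with the six branches of $K$ connecting $\{d,f\}$ to $\{a,b,c\}$ all lie inside $G'-e$ (the edges $va, vb, vc$ because $e$ is not incident to $v$, and the six branches because $K \subseteq G-e$ avoids $e$ and none of them is a triangle edge). These nine internally disjoint paths between the parts $\{v,d,f\}$ and $\{a,b,c\}$ assemble into a $K_{3,3}$-subdivision in $G'-e$, showing $G'-e$ is nonplanar. I expect the main technical obstacle to be the structural argument forcing $a, b, c$ to be branch vertices of $K$ in this last subcase.
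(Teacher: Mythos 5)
Your proof is correct, and while it shares the paper's skeleton (the forward direction is definitional; for the converse one only needs to handle edges $e$ not incident to $v$, which are exactly the non-triangle edges of $G$, using that $G-e$ is nonplanar), it diverges at the key step. The paper simply observes that $G'-e$ is the result of a $\TY$ move on $G-e$ and invokes, without proof, the standard fact that undoing a $\TY$ move (a $\YT$ move) preserves planarity, so $G'-e$ planar would force $G-e$ planar. You instead prove the contrapositive fact --- that the $\TY$ move preserves nonplanarity --- from first principles: the contraction identities $(G'-e)/va = G-e-bc$ (and its two symmetric versions) dispose of every case except the one where all three of $G-e-ab$, $G-e-ac$, $G-e-bc$ are planar, and there every K-subgraph of $G-e$ must contain the triangle $abc$, hence is a $K_5$-subdivision with $a,b,c$ as branch vertices, which you convert by hand into a $K_{3,3}$-subdivision of $G'-e$ on the parts $\{v,d,f\}$ and $\{a,b,c\}$. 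Your approach is longer but self-contained and makes the mechanism explicit; the paper's is shorter but leans on an unproven (though standard) lemma about $\Delta$-$\mathrm{Y}$ exchanges and planarity. The only spot in your sketch needing polish is the claim that $a,b,c$ must all be branch vertices: your stated argument tacitly assumes $b$ and $c$ are already branch vertices when it speaks of ``a second branch between the same pair.'' The clean version is that any $3$-cycle in a subdivision of $K_5$ projects to a $3$-cycle of $K_5$ traversed along three complete branches, so each of the three branches is a single unsubdivided edge and each of the three vertices is a branch vertex; with that substitution the argument is complete.
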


\begin{proof}
    If $G'$ is NE, then $G'-e_i$ is nonplanar
    by definition.
    Conversely suppose that $G'-e_i$ is nonplanar
    for each $e_i$ incident to $v$.
    Perhaps $G'$ is not NE so there is $ e \in \E(G')$
    such that $G'-e$ is planar.
    Note that $e$ cannot be incident to $v$.
    Since $e$ is not part of triangle $t$,
    performing a $\TY$ move on $G-e$ will result in $G'-e$,
    so $\TY$ on $G-e$ is also planar.
    Note that undoing the $\TY$ transform on this graph
    will preserve its planarity.
    However, graph $G-e$ being planar contradicts $G$ being NE.
\end{proof}

We next give an upper bound on the connectivity of MMNE graphs.

\begin{theorem}
If $G$ is MMNE, then $\kappa(G) \leq 5$.
\end{theorem}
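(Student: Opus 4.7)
The plan is to mimic the strategy of the analogous MMNA connectivity bound, but to exploit the fact that edge deletion removes only one edge (rather than a whole vertex's worth of edges), which should make the argument considerably shorter. The key observation is that for any MMNE graph $G$, deleting a single edge yields a proper minor that is not NE, and hence either planar or just one further edge deletion away from being planar; combined with the large edge count forced by $\kappa(G) \geq 6$, this will produce a contradiction by a simple edge-counting argument.

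Suppose for contradiction that $G$ is MMNE with $\kappa(G) \geq 6$. First I would note that $\delta(G) \geq \kappa(G) \geq 6$, so the handshake lemma yields $|\E(G)| \geq 3|\V(G)|$.

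Next, I would pick an arbitrary edge $e \in \E(G)$. Since $G$ is NE, the graph $G-e$ is nonplanar. But since $G$ is MMNE and $G-e$ is a proper minor of $G$, the graph $G-e$ is not NE. A nonplanar graph fails to be NE only if it has an edge whose deletion gives a planar graph, so there exists $f \in \E(G-e)$ (necessarily distinct from $e$) such that $G-e,f$ is planar. Applying the planar edge bound to $G - e, f$, which has $|\V(G)|$ vertices, gives $|\E(G)| - 2 = |\E(G-e,f)| \leq 3|\V(G)| - 6$, so $|\E(G)| \leq 3|\V(G)| - 4$. This contradicts the lower bound $|\E(G)| \geq 3|\V(G)|$ derived above.

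There is no real obstacle here: the whole proof reduces to a one-step edge count, made possible because the MMNE hypothesis lets us reach a planar graph by removing only two edges rather than a vertex. As in the MMNA case, the bound is sharp: $K_6$ is MMNE with $\kappa(K_6) = 5$.
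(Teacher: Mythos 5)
Your argument is correct and is essentially the paper's own proof: both derive $|\E(G)|\geq 3|\V(G)|$ from $\delta(G)\geq\kappa(G)\geq 6$ and then use minor minimality to find two edges $e,f$ with $G-e,f$ planar, contradicting the $3|\V(G)|-6$ bound for planar graphs. One caveat on your closing aside: $K_6$ is \emph{not} MMNE, since its proper minor $K_6-e$ is already NE (indeed it appears in the paper's list of MMNE graphs), so unlike the MMNA case the paper does not claim the bound is attained for MMNE graphs.
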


\begin{proof}
Suppose $G$ is MMNE with $\kappa(G) \geq 6$ and let $n = | \V(G) |$.
We can assume $n \geq 6$ as $G$ must be nonplanar and the
only nonplanar graph with five or fewer vertices is $K_5$, which is not MMNE.
Since $\kappa(G) \geq 6$, the minimum degree of $G$ is at least six and a lower
bound on $| \E(G) |$ is $6n/2 = 3n$. Now since $G$ is MMNE,
there exists two edges $e$ and $f$ such that $G - e,f$ is a planar graph with at least $3n-2$ edges,
However,
a planar graph on $n$ vertices can have no more than $3n-6$ edges, the number of edges in a planar triangulation. The contradiction shows there is no MMNE graph
with $\kappa(G) \geq 6$.
\end{proof}

Finally, we observe a nice connection between MMNE and MMNA graphs.

\begin{theorem}
If $G$ is MMNE, then $G$ is MMNA or apex.
\end{theorem}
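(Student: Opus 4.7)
The plan is to argue by contrapositive on the second disjunct: assume $G$ is MMNE and not apex, and show $G$ must be MMNA. Since $G$ is not apex, $G$ is already NA by definition, so what remains is to verify that no proper minor of $G$ is NA.

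The key observation I will use is the implication ``NA $\Rightarrow$ NE'' for arbitrary graphs. To see this, suppose $H$ is NA and, for contradiction, $H - e$ is planar for some edge $e = uv$. Then deleting vertex $v$ kills all edges at $v$, including $e$, so $(H-e) - v = H - v$. Thus $H - v$ is a subgraph of the planar graph $H - e$, hence planar, contradicting $H$ being NA. So every NA graph is NE; this is the one real content in the argument.

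With this implication in hand, the rest is immediate. If some proper minor $H$ of $G$ were NA, then $H$ would also be NE by the above observation, contradicting the minor-minimality in $G$ being MMNE. Therefore no proper minor of $G$ is NA, and combined with $G$ itself being NA this gives $G$ MMNA, completing the proof.

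There is no real obstacle here: the whole argument hinges on the one-line subgraph observation that deleting an endpoint of an apex edge yields a planar subgraph, so NA is at least as strong as NE on any graph. Once that is noted, the minor-minimality of MMNE directly forbids NA proper minors, so the only way $G$ can fail to be MMNA is to fail to be NA, i.e. to be apex.
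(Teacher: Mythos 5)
Your proof is correct and takes essentially the same route as the paper's: your key observation that NA implies NE is precisely the contrapositive of the paper's step that every proper minor, being edge apex (or planar), is apex, and both rest on the same one-line fact that $H-v$ is a planar subgraph of $H-e$ when $v$ is an endpoint of $e$. The only cosmetic difference is that you phrase the argument as excluding NA proper minors rather than directly showing all proper minors are apex.
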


\begin{proof}
Suppose $G$ is MMNE and NA. We will argue that $G$ is in fact MMNA. For this, let $H$ be a proper minor. Since $G$ is MMNE, then $H$ is edge apex. This means either $H$ is already planar, or else there's an edge $e$ such that $H-e$ is planar.
In the latter case, if $v$ is a vertex of $e$, then $H-v$ is again planar. This shows
that $H$ is apex, as required.
\end{proof}

\subsection{Results of Computer Searches}
In addition to the results above,
we have found other examples of MMNE and MMNC graphs
through brute-force computer searches.
The algorithms underlying the searches are fairly straightforward.
First we generate a list of all the graphs that we are going to search
using the gtools that are available with the
\texttt{nauty and Traces} graph theory software \cite{MP}.
%%% PIERCE: remove this text 
    % Specifically, we use \texttt{geng} to produce all graphs
    % that have $19$ or fewer edges or $9$ or fewer vertices, and
    % that all have a minimum vertex degree of 
    % at least two.
    % As it is, each search (MMNE and MMNC) took close to a day to complete on a laptop computer.
    % After generating the graphs we use Wolfram Mathematica
    % to iterate over these graphs and pull out the ones
    % that are found to be MMNE or MMNC.
%%% PIERCE: add this indented text
    Specifically, we use the gtools \texttt{geng} and \texttt{planarg} to produce
    all connected, nonplanar graphs of minimum vertex degree 
%    $2$ 
at least two
    that either have
    fewer than $20$ edges or that have fewer than $10$ vertices.
    The commands used to generate these graphs in \texttt{bash} are the following:

%%% PIERCE: add this text
\begin{verbatim}
    $ for i in {6..9}; do
        geng -c -d2 ${i} | planarg -v > ${i}v.txt
        done
    $ for i in {10..16}; do
        geng -c -d2 ${i} 0:17 | planarg -v > ${i}v,(0-17)e.txt
        geng -c -d2 ${i} 18   | planarg -v > ${i}v,(18)e.txt
        geng -c -d2 ${i} 19   | planarg -v > ${i}v,(19)e.txt
        done
\end{verbatim}

%%% PIERCE: add this indented text
    This brute force search was carried out on a standard laptop computer
    with 4GB of memory and an Intel Core i3-350M 2.266GHz processor.
    The graphs to be searched were split among many different files
    so that the search could be run in more manageable segments
    and so that we didn't overflow the laptop's memory.
    We chose to limit our search to graphs with fewer than $20$ edges
    or fewer than $10$ vertices due to time constraints.
    There are a total of $158\,505$ connected, nonplanar graphs
    that have $9$ vertices and a minimum vertex degree of 
%    $2$.
at least two.
    Searching these graphs took about 
%    $5$ hours to complete.
five hours.
    Since there are $9\,229\,423$ 
%    of the same 
such
    graphs on $10$ vertices, 
    searching these would take more than 
%    $10$ days to complete.
ten days.
    Similarly it took about 
%    $3$ 
three
    days to search all $7\,753\,990$ 
%    of the 
    connected,
    nonplanar graphs that have $19$ edges and a minimum vertex degree of 
%    $2$,
at least two,
    so searching all $44\,858\,715$ 
%    of the 
    similar graphs 
    on $20$ edges is not feasible.

%%%%%%%%%%%%%%%%%%%%%%%%%%%%%%%%%%%%%%%%%%%%%%%%%%%%%%%%%%%%%%%%%%%%%%%%%%%%%%%
%%% TOTAL NUMBER OF CONNECTED, NONPLANAR GRAPHS WITH MINDEGREE 2 %%%%%%%%%%%%%%
%%%%%%%%%%%%%%%%%%%%%%%%%%%%%%%%%%%%%%%%%%%%%%%%%%%%%%%%%%%%%%%%%%%%%%%%%%%%%%%
%%% Edges | Vertices | Total Graphs |
%%%  ALL  |       9  |      158505  | Took 04:50 for MMNE/MMNC search
%%%  ALL  |      10  |     9229423  |
%%%       |          |              |
%%%   19  |      10  |      420222  | Took 04:20 for MMNE/MMNC search
%%%   19  |      11  |     1927763  |
%%%   19  |      12  |     3074894  |
%%%   19  |      13  |     1863851  |
%%%   19  |      14  |      431042  |
%%%   19  |      15  |       34488  |
%%%   19  |      16  |         730  |
%%%   19  |      17  |           0  |
%%%   19  |   10-16  |     7752990  |
%%%       |          |              |
%%%   20  |      10  |      701434  |
%%%   20  |      11  |     5490044  |
%%%   20  |      12  |    14860057  |
%%%   20  |      13  |    15731269  |
%%%   20  |      14  |     6818435  |
%%%   20  |      15  |     1182896  |
%%%   20  |      16  |       73366  |
%%%   20  |      17  |        1214  |
%%%   20  |      18  |           0  |
%%%   20  |   10-17  |    44858715  |

%%% PIERCE: add this indented text
    Next we reformat these graphs in each file produced
    to be read into Wolfram Mathematica. Then we use Mathematica functions 
    to iterate over this list of graphs one file at a time
    and pull out any that are found to be either MMNE or MMNC.
    The code in Mathematica was run on a single Mathematica kernel
    (no attempt was made to parallelize the search in Mathematica).
An overview of the method of testing if a graph $G$ is MMNE is as follows,
and an analogous method is used to test if a graph is MMNC:
\begin{enumerate} \itemindent 0pt
  \item For each $e \in E(G)$, if $G-e$ is planar return false.
  \item Build all the simple minors of $G$
        %%% PIERCE:
        %($\{G-e, G/e \mid e \in E(G)\}$)
        (the graphs in $\{G-e, G/e \mid e \in E(G)\}$)
        and remove any duplicates (under isomorphism).
        If for any of these graphs there is no edge $f$
        such that $G-f$ is planar, return false.
  \item \label{enum:sieve}
        Take $S = \{G\} \cup \{G-e \mid e \in E(G)\}$.
        While $S \neq \varnothing$:
        \begin{enumerate}
          %%% PIERCE:
          %\item Reset $S$ as result of contracting
          \item Reset $S$ to the result of contracting
                each edge of each graph in $S$.
          \item Remove all planar graphs and duplicate
                graphs from $S$.
          \item If there exists $G \in S$ such that
                $G-e$ is nonplanar for each $e \in E(G)$
                then return false.
        \end{enumerate}
  \item Return true.
\end{enumerate}

We need step (\ref{enum:sieve}) explicitly because
both of the properties edge apex and contraction apex
are \emph{not} closed under taking graph 
minors as shown in 
%Theorems \ref{thmmmnenotclosed} and \ref{thmmmncnotclosed}.
Theorems \ref{thmmmnencex} and \ref{thmmmncncex}.

In addition to the $12$ MMNE graphs that have been considered
in this section, the brute-force search has found
$15$ more examples of MMNE graphs (listed in 
Appendix~\ref{app:mmne}).
Notable graphs in this list are $K_{4,3}$, $K_6-e$,
the rook's graph on 
nine vertices, and some examples
of MMNE graphs with degree two vertices.
The brute-force search also found new examples of MMNC
graphs in addition to the six
graphs considered in this section.
In particular, the computer demonstrated that the 
six MMNE graphs of connectivity two in
Figure~\ref{figMMNEK2} 
are also MMNC.
Along with these graphs there are $69$ other
MMNC graphs on $19$ or fewer edges or $9$ or fewer vertices.
Appendix~\ref{app:mmnc} is an abridged list of these graphs (those on $17$ or fewer edges or $9$ or fewer vertices).

Beyond a simple brute-force search, we also conducted
a more intelligent graph search using the knowledge that
performing $\TY$ and $\YT$ moves on a graph 
has the potential
to preserve the NE or NC property of that graph, see Theorem~\ref{thmmmnety}.
The idea is that the $\TY$ or $\YT$ families of an MMNE or MMNC graph
may contain new MMNE or MMNC graphs.
The details of the methodology of this search
as well as the Mathematica code can be found in \cite{P}.
In total, we have found $55$ MMNE graphs and $82$ MMNC graphs,
and we suspect that there are many more of each.
Tables \ref{tblMMNE} and \ref{tblMMNC} below give a classification
of the MMNE and MMNC graphs we have found organized by graph size.

\begin{table}[h]
\begin{center}
\begin{footnotesize}
\begin{tabular}{r||c|c|c|c|c|c|c|c|c|c c}
    Graph Size ($|E(G)|$) &$\leq$11&     12&     13&     14&     15&     16&     17&     18&     19&     20&\textbf{\ldots}\\\cline{1-11}%
    Number of MMNE Graphs &       0&      1&      0&      2&      0&      2&      3&     11&      6&$\geq$2&\textbf{\ldots}%
                                                                                                            \vspace{5pt}\\
          \textbf{\ldots} &      21&     22&     23&     24&     25&     26&     27&     28&     29&     30&\\\cline{2-11}%
          \textbf{\ldots} &$\geq$13&$\geq$7&$\geq$4&$\geq$2&$\geq$0&$\geq$0&$\geq$1&$\geq$0&$\geq$0&$\geq$1&\\%
\end{tabular}
\end{footnotesize}
\end{center}
\vspace{5pt}
\caption{The number of MMNE graphs we have found grouped by size.
         Note that this is a complete classification based on
         graph size up to and including size $19$.}
\label{tblMMNE}%
\end{table}

\begin{table}[h]
\begin{center}
\begin{footnotesize}
\begin{tabular}{r||c|c|c|c|c|c|c|c|c|c}
    Graph Size ($|E(G)|$) &$\leq$11&12&13&14&15&16&17&18&19&20      \\ \hline
    Number of MMNC Graphs &0       &1 &0 &0 &1 &6 &14&32&25&$\geq$3 \\
\end{tabular}
\end{footnotesize}
\end{center}
\vspace{5pt}
\caption{The number of MMNC graphs we have found grouped by size.
         Note that this is a complete classification based on
         graph size with the exception of size $20$.}
\label{tblMMNC}%
\end{table}

\section*{Acknowledgements}
This material is based upon work supported by the National Science Foundation under Grant Number 1156612.
We received additional support through a Research and Creativity Award from
the Provost's office at CSU, Chico as well as Math Summer Research Internships
from the Math Department. We thank Ramin Naimi and Bojan Mohar for helpful conversations.

\appendix
\section{Edge lists of graphs found through computer searches}
\subsection{MMNE Graphs}
\label{app:mmne}

The following $15$ MMNE graphs are the result of a computer search
conducted on the set of graphs that have $19$ or fewer edges
or $9$ or fewer vertices, and that all have a minimum vertex degree of 
at least two.
These graphs, together with 
%ten
eleven
other graphs considered
explicitly in the 
%paper make 
paper (i.e., all but $K_5 \sqcup K_5$, which has order 10 and size 20) make 
up all 
%$25$ 
26
MMNE graphs on
$19$ or fewer edges or on $9$ or fewer 
vertices.
%(Note that two of the 12 graphs we constructed explicitly, $K_5 \sqcup K_5$
%and $K_5 \dotcup K_5$, have been excluded from this count as each has
%size 20.)
(Note that Table~\ref{tblMMNE} gives 25 graphs of size 19 or less. 
Adding the graph $K_5 \dotcup K_5$, of order 9 and size 20, is what brings the 
total to 26.)
\vspace{10pt}
\begin{itemize} \itemsep 8pt \itemindent -5pt
  \item[] $\{(1,8),(1,9),(2,4),(2,7),(2,8),(3,6),(3,7),(3,8),(4,5),(4,6),\\
             (4,8),(5,6),(5,7),(5,9),(6,7),(6,9),(7,9),(8,9)\}$
  \item[] $\{(1,6),(1,7),(2,5),(2,7),(3,7),(3,8),(3,9),(4,5),(4,6),(4,8),\\
             (4,9),(5,7),(5,8),(5,9),(6,7),(6,8),(6,9),(8,9)\}$
  \item[] $\{(1,8),(1,9),(2,6),(2,7),(2,9),(3,5),(3,7),(3,9),(4,5),(4,6),\\
             (4,9),(5,6),(5,7),(5,8),(6,7),(6,8),(7,8),(8,9)\}$
  \item[] $\{(1,8),(1,9),(2,7),(2,10),(3,6),(3,8),(3,10),(4,6),(4,7),(4,9),\\
             (5,6),(5,7),(5,8),(6,9),(6,10),(7,8),(7,10),(8,9),(9,10)\}$
  \item[] $\{(1,9),(1,10),(2,7),(2,8),(2,10),(3,7),(3,8),(3,9),(4,6),(4,8),\\
             (4,10),(5,6),(5,7),(5,9),(6,7),(6,8),(7,10),(8,9),(9,10)\}$
  \item[] $\{(1,6),(1,9),(2,7),(2,8),(3,6),(3,7),(3,10),(4,5),(4,6),(4,7),\\
             (4,10),(5,8),(5,9),(5,10),(6,9),(7,8),(8,9),(8,10),(9,10)\}$
  \item[] $\{(1,8),(1,10),(2,4),(2,8),(2,9),(3,4),(3,5),(3,9),(4,5),(4,6),\\
             (5,7),(5,10),(6,7),(6,8),(6,9),(7,9),(7,10),(8,10),(9,10)\}$
  \item[] $\{(1,6),(1,7),(1,9),(2,7),(2,8),(2,9),(3,6),(3,8),(3,9),(4,5),\\
             (4,8),(4,9),(5,6),(5,7),(5,9),(6,8),(7,8)\}$
  \item[] $\{(1,7),(1,8),(1,9),(2,6),(2,8),(2,9),(3,6),(3,7),(3,9),(4,6),\\
             (4,7),(4,8),(5,6),(5,7),(5,8),(5,9)\}$
  \item[] $\{(1,6),(1,7),(1,8),(2,5),(2,7),(2,8),(3,4),(3,7),(3,8),(4,5),\\
             (4,6),(4,7),(4,8),(5,6),(5,7),(5,8),(6,7),(6,8)\}$
  \item[] $\{(1,6),(1,7),(1,9),(2,5),(2,7),(2,8),(3,7),(3,8),(3,9),(4,5),\\
             (4,6),(4,8),(4,9),(5,7),(5,9),(6,7),(6,8),(8,9)\}$
  \item[] $\{(1,4),(1,7),(1,8),(2,3),(2,7),(2,8),(3,5),(3,6),(4,5),(4,6),\\
             (5,7),(5,8),(6,7),(6,8)\}$
  \item[] $\{(1,5),(1,6),(1,7),(2,5),(2,6),(2,7),(3,5),(3,6),(3,7),(4,5),\\
             (4,6),(4,7)\}$
  \item[] $\{(1,6),(1,7),(1,8),(1,9),(2,4),(2,5),(2,8),(2,9),(3,4),(3,5),\\
             (3,6),(3,7),(4,7),(4,9),(5,6),(5,8),(6,9),(7,8)\}$
  \item[] $\{(1,3),(1,4),(1,5),(1,6),(2,3),(2,4),(2,5),(2,6),(3,4),(3,5),\\
             (3,6),(4,5),(4,6),(5,6)\}$
\end{itemize}

\subsection{MMNC Graphs}
\label{app:mmnc}

The following $22$ MMNC graphs are the result of a computer search
conducted on the set of graphs that have $17$ or fewer edges
or $9$ or fewer vertices, and that all have a minimum vertex degree of 
at least two.
\begin{itemize} \itemsep 8pt \itemindent -5pt
  \item[] $\{(1,9),(1,12),(2,8),(2,11),(3,6),(3,7),(4,5),(4,10),(5,11),(5,12),\\
             (6,9),(6,11),(7,8),(7,12),(8,10),(9,10)\}$
  \item[] $\{(1,6),(1,10),(2,5),(2,9),(3,4),(3,6),(3,8),(4,5),(4,7),(5,10),\\
             (6,9),(7,9),(7,11),(8,10),(8,11),(9,11),(10,11)\}$
  \item[] $\{(1,6),(1,10),(2,7),(2,8),(2,9),(3,6),(3,8),(3,9),(4,7),(4,9),\\
             (4,10),(5,7),(5,8),(5,10),(6,7),(8,10),(9,10)\}$
  \item[] $\{(1,9),(1,10),(2,3),(2,6),(2,7),(3,4),(3,5),(4,7),(4,10),(5,6),\\
             (5,9),(6,8),(6,10),(7,8),(7,9),(8,9),(8,10)\}$
  \item[] $\{(1,9),(1,11),(2,9),(2,10),(3,4),(3,6),(3,11),(4,5),(4,10),(5,8),\\
             (5,9),(6,7),(6,9),(7,10),(7,11),(8,10),(8,11)\}$
  \item[] $\{(1,9),(1,11),(2,9),(2,10),(3,5),(3,6),(3,7),(4,5),(4,6),(4,9),\\
             (5,11),(6,10),(7,8),(7,9),(8,10),(8,11),(10,11)\}$
  \item[] $\{(1,4),(1,11),(2,6),(2,9),(3,5),(3,6),(3,7),(4,5),(4,9),(5,10),\\
             (6,11),(7,9),(7,10),(8,9),(8,10),(8,11),(10,11)\}$
  \item[] $\{(1,9),(1,11),(2,4),(2,5),(2,6),(3,5),(3,6),(3,7),(4,8),(4,9),\\
             (5,11),(6,10),(7,9),(7,10),(8,10),(8,11),(10,11)\}$
  \item[] $\{(1,10),(1,11),(2,3),(2,7),(2,9),(3,6),(3,8),(4,5),(4,9),(4,10),\\
             (5,8),(5,11),(6,7),(6,11),(7,10),(8,10),(9,11)\}$
  \item[] $\{(1,8),(1,9),(2,6),(2,12),(3,5),(3,11),(4,11),(4,12),(5,7),(5,9),\\
             (6,7),(6,8),(7,10),(8,11),(9,12),(10,11),(10,12)\}$
  \item[] $\{(1,9),(1,11),(2,5),(2,12),(3,4),(3,12),(4,8),(4,9),(5,7),(5,9),\\
             (6,7),(6,8),(6,11),(7,10),(8,10),(10,12),(11,12)\}$
  \item[] $\{(1,4),(1,8),(1,9),(2,3),(2,8),(2,9),(3,4),(3,6),(3,9),(4,5),\\
             (4,8),(5,6),(5,7),(5,9),(6,7),(6,8),(7,8),(7,9)\}$
  \item[] $\{(1,4),(1,8),(1,9),(2,4),(2,7),(2,9),(3,4),(3,6),(3,9),(5,6),\\
             (5,7),(5,8),(5,9),(6,7),(6,8),(7,8)\}$
  \item[] $\{(1,5),(1,6),(1,8),(2,3),(2,4),(2,7),(3,6),(3,10),(4,5),(4,10),\\
             (5,9),(6,9),(7,9),(7,10),(8,9),(8,10)\}$
  \item[] $\{(1,5),(1,6),(1,8),(2,3),(2,4),(2,7),(3,6),(3,10),(4,5),(4,9),\\
             (5,10),(6,9),(7,9),(7,10),(8,9),(8,10)\}$
  \item[] $\{(1,2),(1,9),(1,10),(2,7),(2,8),(3,8),(3,9),(3,10),(4,7),(4,9),\\
             (4,10),(5,7),(5,8),(5,10),(6,7),(6,8),(6,9)\}$
  \item[] $\{(1,2),(1,4),(1,10),(2,3),(2,9),(3,4),(3,7),(4,8),(5,7),(5,8),\\
             (5,10),(6,7),(6,8),(6,9),(7,10),(8,9),(9,10)\}$
  \item[] $\{(1,5),(1,6),(1,7),(2,5),(2,6),(2,7),(3,5),(3,6),(3,7),(4,5),\\
             (4,6),(4,7)\}$
  \item[] $\{(1,2),(1,4),(1,7),(1,9),(2,3),(2,6),(2,8),(3,5),(3,6),(3,9),\\
             (4,5),(4,7),(4,8),(5,8),(5,9),(6,8),(6,9),(7,8),(7,9)\}$
  \item[] $\{(1,6),(1,7),(1,8),(1,9),(2,4),(2,5),(2,8),(2,9),(3,4),(3,5),\\
             (3,6),(3,7),(4,7),(4,9),(5,6),(5,8),(6,9),(7,8)\}$
  \item[] $\{(1,5),(1,6),(1,7),(1,8),(2,3),(2,4),(2,7),(2,8),(3,4),(3,6),\\
             (3,8),(4,5),(4,8),(5,6),(5,7),(6,7)\}$
  \item[] $\{(1,2),(1,3),(1,4),(1,5),(1,6),(2,3),(2,4),(2,5),(2,6),(3,4),\\
             (3,5),(3,6),(4,5),(4,6),(5,6)\}$
\end{itemize}


\begin{thebibliography}{MT}

\bibitem[A]{A} H.\ Ayala.
MMNA graphs on eight vertices or fewer.
(2014) CSU, Chico Master's Thesis.
Available at \verb"http://www.csuchico.edu/~tmattman"

\bibitem[BM]{BM}J.\ Barsotti and T.W.\ Mattman.
Graphs on 21 edges that are not $2$--apex.
Preprint.

\bibitem[CM]{CM}
S.\ Cabello and B.\  Mohar. Adding one edge to planar graphs makes crossing number and 1-planarity hard.
{\em SIAM J.\ Comput.}  \textbf{42}  (2013), 1803--1829.
arXiv:1203.5944

\bibitem[D]{D}
R.\ Diestel. Graph theory.
Fourth edition.
Graduate Texts in Mathematics, \textbf{173}. Springer, Heidelberg,  2010.

\bibitem[G]{G}
B.S.\ Gubser. A characterization of almost-planar graphs.
{\em Combin.\ Probab.\ Comput.} \textbf{5}  (1996), 227--245.

\bibitem[K]{K} K.\ Kuratowski.
Sur le probl\`eme des courbes gauches en topologie.
{\em Fund.\ Math.} \textbf{15} (1930) 271--283.

\bibitem[M]{M} W.\ Mader, $3n-5$ edges do force a subdivision of $K_5$.
{\em Combinatorica} \textbf{18} (1998), 569--595.

\bibitem[MP]{MP}
B.D.\ McKay and A.\ Piperno.
Practical Graph Isomorphism, II.
{\em J.\ Symbolic Computation} \textbf{60} (2014) 94--112. 
%http://www.sciencedirect.com/science/article/pii/S0747717113001193

\bibitem[MT]{MT}
B.\ Mohar and C.\ Thomassen. Graphs on surfaces.
Johns Hopkins Studies in the Mathematical Sciences. Johns Hopkins University Press, Baltimore, MD,  2001.

\bibitem[P]{P} M.\ Pierce. Searching for and Classifying the Finite Set of Minor-Minimal Non-Apex Graphs.
   (2014) CSU, Chico Honor's Thesis.
Available at \verb"http://www.csuchico.edu/~tmattman"

\bibitem[RS]{RS}
N.\ Robertson and P.\ Seymour.
Graph minors. XX. Wagner's conjecture.
{\em J.\ Combin.\ Theory Ser.\ B}, \textbf{92} (2004), 325--357.

\bibitem[W]{W}
K.\ Wagner.
 \"Uber eine Eigenschaft der ebenen Komplexe.
 {\em Math.\ Ann.} \textbf{114} (1937), 570--590.

 \bibitem[W2]{W2}
 K.\ Wagner.
 Fastpl\"attbare Graphen.
{\em J.\ Combinatorial Theory} \textbf{3}  (1967), 326--365.

\end{thebibliography}
\end{document}